\author{Alan Yan}
\title{Log-concavity in Combinatorics}
\abstract{In this thesis we survey some of the mechanisms used to prove that naturally defined sequences in combinatorics are log-concave. Among these mechanisms are Alexandrov's inequality for mixed discriminants, Alexandrov's Fenchel inequality for mixed volumes, Lorentzian polynomials, and the Hard Lefschetz theorem. We use these mechanisms to prove some new log-concavity and extremal results related to partially ordered sets and matroids. We present joint work with Ramon van Handel and Xinmeng Zeng to give a complete characterization for the extremals of the Kahn-Saks inequality. We extend Stanley's inequality for regular matroids to arbitrary matroids using the technology of Lorentzian polynomials. As a result, we provide a new proof of the weakest Mason conjecture. We also prove necessary and sufficient conditions for the Gorenstein ring associated to the basis generating polynomial of a matroid to satisfy Hodge-Riemann relations of degree one on the facets of the positive orthant.}
\date{May 1st, 2023}
\begin{document}

\chapter*{Introduction}
\addcontentsline{toc}{section}{Introduction}

Given a sequence of non-negative real numbers $a_0, \ldots, a_n, a_{n+1}$, we say that the sequence is log-concave if and only if $a_i^2 \geq a_{i-1} a_{i+1}$ for all $i \in [n]$. If a sequence of non-negative numbers is log-concave, then it is also unimodal. There is a phenomenon in mathematics that many naturally defined sequences in algebra, combinatorics, and geometry satisfy some version of log-concavity. To explain this phenomenon, there is a rich variety of methods that have been developed to prove log-concavity problems in combinatorics. Our goal in this thesis is to survey a few of log-concavity mechanisms. Specifically, we will survey some of the theory behind mixed discriminants, mixed volumes, Lorentzian polynomials, and Hodge-Riemann relations. Note that this is only a small subset of the techniques to prove sequences are log-concave. In particular, we do not mention analytic techniques, linear algebraic techniques, or real-rootedness techniques. For a more thorough treatment of these techniques, we refer the reader the excellent survey paper by Richard Stanley \cite{Stanley1989LogConcaveAU}. We also do not mention the recent technology of the combinatorial atlas defined by Swee Hong Chan and Igor Pak. For background on the combinatorial atlas, we refer the reader to the original sources \cite{logconcave-poset-inequalities,combinatorial-atlas}. \\

We will be especially interested in the log-concavity of sequences related to partially ordered sets and matroids. The thesis topic was motivated by a project recommended by Ramon van Handel about the extremals of the Kahn-Saks inequality. In a paper by him and Yair Shenfeld, they give a characterization for equality to hold in the Alexandrov-Fenchel inequality when the mixed area measure is associated to polytopes. Using this characterization, they were able to give a characterization for the equality cases of the Stanley poset inequality \cite{STANLEY}. As an extension of the result, we tried to give a similar characterization for the Kahn-Saks inequality. Given a finite poset $P$ and two distinguished elements $x, y \in P$, we can count $N_k$ the number of linear extensions $\sigma$ satisfying $\sigma (y) - \sigma (x) = k$. In \cite{balancing-poset-extensions}, Jeff Kahns and Michael Saks prove that this sequence is log-concave using the theory of mixed volumes and the Alexandrov-Fenchel inequality. Through joint work with Ramon van Handel and Xinmeng Zeng, we were able to get a complete combinatorial characterization for the Kahn-Saks inequality. We present these results in Section~\ref{kahn-saks-inequality}. \\

As an extension of the Kahn-Saks project, we considered the second inequality proved by Stanley in \cite{STANLEY} for matroids. Given a regular matroid $M = (E, \mcI)$ of rank $n$ and a partition $E = R \sqcup Q$ of the ground set, we define $B_k$ to be the number of bases of $M$ which share $k$ elements with $R$. Using the theory of mixed volumes, Stanley proves that the sequence $B_k$ is ultra-log-concave. In his paper, Stanley gives a characterization for the extremals of the slightly weaker inequality $B_1^n \geq B_0^{n} B_{n-1}$ based on the equality cases of the Minkowski inequality. In \cite{bapat_raghavan_1997}, Bapat and Raghavan give a proof of Stanley's matroid inequality by associating each number $B_k$ with a mixed discriminant rather than a mixed volume. Suppose our sequence $B_k$ consists of only positive integers. From our better understanding of the equality cases of Alexandrov's inequality for mixed discriminants, the extremals $B_k^2 \geq B_{k-1} B_{k+1}$ are exactly the extremals of the weaker inquality $B_{1}^n \geq B_0^{n-1} B_n$. This gives a satisfactory characterization of extremals of Stanley's matroid inequality for regular matroids. At this point, we were interested in two questions. Is Stanley's inequality true for arbitrary matroids? If Stanley's inequality is true for arbitrary matroids, will the characterization for the extremals be the same? In Section~\ref{subsection-matroid-lorentzian-stanley-general}, we prove that the answer to the first question is in the affirmative. The proof uses the fact that the basis generating polynomial of a matroid is Lorentzian. For the second question, we proved that the combinatorial characterization guarentees equality. However, we made little progress for the reverse implication. \\

In our attempt to combinatorial characterize the extremals of Stanley's matroid inequality for arbitrary matroids, we began exploring the Hodge theory of matroids. In Section~\ref{sec:gorenstein-ring-of-matroid}, we define a ring $\A(M)$ which we call the the Gorenstein ring associated to the basis generating polynomial of the matroid $M$. In their paper \cite{MNY}, Murai-Nagaoka-Yazawa prove that the ring $\A(M)$ satisfies the Hard Lefschetz property and the Hodge-Riemann relations in degree $1$. They conjecture that the ring satisfies the Hard Lefschetz property in all degrees $k \leq \frac{\rank (M)}{2}$. Our current goal is to prove the stronger conjecture that $\A(M)$ satisfies the K\"ahler package. Our progress towards the general conjecture is outlined in Chapter~\ref{chap:hodge-theory-for-matroids}. 

\chapter{Combinatorial Structures and Convex Geometry}

In this chapter, we review some basic structures from combinatorics and convex geometry. We begin in Section~\ref{sec:posets} by reviewing the notions of a partially ordered set and related concepts. We will not cover any deep concepts in order theory and will content ourselves in reviewing the basic definitions of linear extensions, lattices, and covering relations. In Section~\ref{sec:graph-theory}, we will go over basic notions in graph theory and spectral graph theory. In this section, the notion of the Laplacian of a loopless graph will be important. In Section~\ref{sec:matroids}, we go over the definitions and notions associated to matroids. The properties of matroids will be at center stage for many of our applications in later chapters. In Section~\ref{sec:mixed-discriminants}, we will cover the notion of mixed discriminants. These objects arise as the polarization form of the determinant. Mixed discriminants were introduced by Alexandrov in \cite{aleksandrov} to study the mixed volumes of convex bodies. Finally, in Section~\ref{sec:convex-bodies}, we outline notions in convex geometry and Brunn-Minkowski theory including the notion of mixed volumes. Our goal with this chapter is not to provide an exhaustive overview of the topics mentioned, but to cover the definitions, results, and applications needed for the rest of the thesis. For the reader who wishes to dive more deeply into these individual topics, we refer to more in-depth treatments of the concepts within each section. 

\section{Partially Ordered Sets} \label{sec:posets}

In this section, we review basic notions in the theory of partially ordered sets (posets). Our treatment of posets is similar to that of \cite{ordered-sets} and \cite{Rota1964}. Given a finite set $P$, we abstractly define a binary relation on $P$ as simply a subset of $P \times P$. A \textbf{partial order} on a set is a binary relation that is reflexive, antisymmetric, and transitive. These properties are described in Definition~\ref{def:poset}. 

\begin{defn}[Definition 1.1.1 in \cite{ordered-sets}]\label{def:poset}
	A \textbf{partially ordered set} is an ordered pair $(P, \leq)$ of a set $P$ and a binary relation $\leq$ on $P$ which is reflexive, antisymmetric, and transitive. Explicitly, we have the following conditions. 
	\begin{enumerate}[label = (\alph*)]
		\item $x \leq x$ for all $x \in P$.
		\item If $x \leq y$ and $y \leq x$, then $x = y$.
		\item If $x \leq y$ and $y \leq z$ then $x \leq z$.
	\end{enumerate}
	When $x \leq y$ and $x \neq y$, we can also write $x < y$ or $y > x$.
\end{defn}

In this thesis, we will only consider posets where the ground set is finite. Let $(P, \leq)$ be a finite partially ordered set. We call two elements $x, y \in P$ \textbf{comparable} if and only if $x \leq y$ or $y \leq x$. We write $x \sim y$ if and only if $x$ and $y$ are comparable. If $x$ and $y$ are not comparable, we say that they are incomparable. We call a subset $C \subseteq P$ a \textbf{chain} if every pair of elements in $C$ are comparable. In a finite poset, a chain will always have the form $\{x_1 < \ldots < x_k\}$. For every pair of elements $x, y$, we can define the closed interval $[x, y] := \{z \in P : x \leq z \leq y\}$. We say that $y$ \textbf{covers} $x$ (or $x$ is covered by $y$) if $[x, y] = \{x, y\}$. In this case, we call $x \lessdot y$ a \textbf{covering relation}, and we write $x \lessdot y$ or $y \gtrdot x$. We can diagrammatically visualize posets by drawing each element as a point and drawing the covering relations as edges. Such a diagram is called a \textbf{Hasse diagram}. For an example of a Hasse diagram, see Figure~\ref{fig:example-of-a-is-two}. It is not difficult to see that the covering relations of a poset determine the poset uniquely. A poset element is called a \textbf{minimal} element if it is not greater than any other element. Similarly, we call an element a \textbf{maximal} element if it is not less than any other element. Every pair of elements $x, y \in P$ satisfying $x \leq y$ has a maximal chain $C$ with $x$ as the minimal element in the chain and $y$ as the maximal element in the chain. Any maximal chain from $x$ to $y$ will be of the form 
\[
	x = z_0 \lessdot z_1 \lessdot \ldots \lessdot z_{n-1} \lessdot z_n = y. 
\]
Posets appear all throughout mathematics. For example, the set of integers $\ZZ$ can be equipped with divisibility to give it a partially ordered set structure. The M\"obius function associated with the lattice of integers is a common object in the study of analytic number theory (see \cite{Apostol2013-an}). In category theory, posets are examples of the most basic form of categories. In particular, they are categories where the morphisms between any two objects consists of a single element. Given a collection of sets, they can be given a poset structure with set inclusion as the partial order. Finally, in graph theory, we can equip the vertices of a directed graph with a poset structure where two vertices are comparable if and only if one can be reached from the other. 

\subsection{Linear Extensions}

Given two posets $(P_1, \leq_1)$ and $(P_2, \leq_2)$, we define say a map $f : P_1 \to P_2$ is \textbf{order-preserving} if $f(x) \leq_2 f(y)$ whenever $x, y \in P_1$ satisfy $x \leq_1 y$. If $|P| = n$, we call any bijective order-preserving map $f : P \to [n]$ a linear extension where $[n]$ is equipped with its natural total order. 

\begin{defn} \label{defn:linear-extension}
	Let $(P, \leq)$ be a poset on $n$ elements. A \textbf{linear extension} is any bijective map $f : P \to [n]$ such that $f(x) < f(y)$ whenever $x, y \in P$ satisfies $x < y$.
\end{defn}

Recall that a partial order on $P$ is called a $\textbf{total order}$ if every pair of elements is comparable. Every partial order can be viewed a total order where some comparability information is missing. A linear extension is simply a way to extend a partial order to a total order. In later sections, we will be interested in the log-concavity of sequences which enumerate linear extensions of a poset.

\subsection{Lattices}

Let $P$ be a poset and let $x, y \in P$ be two arbitrary elements. We say that $z \in P$ is a \textbf{least upper bound} or \textbf{join} of $x$ and $y$ if $z \geq x$, $z \geq y$, and for any $w \in P$ satisfying $w \geq x, w \geq y$ we have that $w \geq z$. Similarly, we say $z$ is a \textbf{greatest lower bound} or \textbf{meet} if $z \leq x$, $z \leq y$, and for any $w \in P$ satisfying $w \leq x, w \leq y$, we have that $w \leq z$. If the meet or join of two elements exist, they must be unique. In a general poset, the meet and join of two elements does not necessarily exist. When they do exist for every pair of elements, we call the poset a lattice. 

\begin{defn} \label{defn:lattice}
	Let $\mcL$ be a finite poset. We say $\mcL$ is a lattice if every pair of elements in $\mcL$ has a meet and a join. When $x, y \in \mcL$, we let $x \wedge y$  and $x \vee y$ denote the meet and join of $x$ and $y$. 
\end{defn}

Given a lattice, it is not hard to show that the meet and join operations are commutative and associative. The partially ordered set of natural numbers equipped with divisibility forms a lattice under the greatest common denominator and the least common multiple. A lattice will automatically have a unique minimal element which is a global minimum and a unique maximal element which is a global maximum. If a poset has a global minimum, we call this element the $0$ element. If a poset has a global maximum, we call this element the $1$ element. Let $x \in P$ be an element which covers the $0$ element. In this case, we call $x$ an \textbf{atom}. Dually, if $x$ is covered by the $1$ element, then we call $x$ a \textbf{co-atom}. For some examples of lattices, we will be introduced to the lattice of faces of a polytope and the lattices of flats of a matroid. The latter example satisfies extra conditions which makes it \textbf{geometric lattice}. For a thorough treatment of geometric lattices and their connections to matroids, we refer the reader to \cite{10.5555/1197093}. 

\section{Graph Theory} \label{sec:graph-theory}

In this section, we briefly review some notions in graph theory. We assume that the reader has some basic background knowledge on graph theory such as the definitions of connected components, paths, trees, etc. In Definition~\ref{def:graph}, we provide the definition of a graph that we will use in the thesis. Note that to each graph $G$ we attach some arbitrary total ordering on the vertices. For a thorough reading of graph theory, we refer the reader to \cite{diestel}. For notions in spectral graph theory, we refer the reader to \cite{chung-spectral-graph-theory}. 

\begin{defn} \label{def:graph}
	A \textbf{graph} is an ordered pair $(V, E)$ of vertices and edges such that each edge is associated with either two distinct vertices or one vertex. If an edge is associated with two distinct vertices, then we call it a \textbf{simple edge}. If an edge is associated with one vertex, then we call it a \textbf{loop}. We also equip $V$ with an arbitrary total ordering. 
\end{defn}

The role of the total ordering in Definition~\ref{def:graph} will show up in Definition~\ref{def:incidence-matrix} when we define the incidence matrix. When two vertices in $G$ contain an edge, we say that they are \textbf{adjacent}. If two vertices $v$ and $w$ are adjacent, we write $v \sim w$. This corresponds to comparability in the reachability poset of the graph. If an edge contains a vertex, we say that the edge is \textbf{incident} to the vertex. Given a connected graph $G$, we say a subgraph $T \subseteq G$ is a \textbf{spanning tree} if it is a tree that is incident to all vertices of $G$. In general, when $G$ is a graph (not necessarily connected), we call $T$ a \textbf{spanning forest} if it is a forest that is incident to all vertices of $G$. 

\subsection{Spectral Graph Theory} \label{sec:spectral-graph-theory}

In this section, we will assume that our graph $G$ is loopless. For every vertex $v \in V$, we define $\deg(v)$ to be the number of edges incident to $v$. For any pair of distinct vertices $v, w \in V$, we define $e(v, w)$ to be the number of edges between $v$ and $w$. In particular, we have that 
\[
	\deg (v) = \sum_{\substack{w \in V \\ w \neq v}} e(v, w).
\]

\begin{defn}
	Let $G = (V, E)$ be a (loopless) graph where $V = \{v_1, \ldots, v_n\}$. We define its Laplacian matrix $L := L_G$ to be the $n \times n$ matrix where the $(i, j)$ entry is given by 
	\[
		L_{i, j} := \begin{cases}
			\deg (v_i) & \text{if $i = j$} \\
			-E_{v_i, v_j} & \text{if $i \neq j$ and $v_i \sim v_j$}
		\end{cases}
	\]
	where $E_{v_i, v_j}$ is the number of edges between $v_i$ and $v_j$. 
\end{defn}

\begin{defn} \label{def:incidence-matrix}
	Let $G$ be a graph and let $V = \{v_1 < \ldots < v_n\}$ be an arbitrary ordering of the vertices. We define a $|V| \times |E|$ matrix $B := B_G$ called the \textbf{incidence matrix} where the entry indexed by the vertex $v$ and the edge $e = \{v_i < v_j\} \in E$ is equal to 
	\[
		B_{ve} = \begin{cases}
			1, & \text{if $v = v_i$} \\
			-1, & \text{if $v = v_j$} \\
			0, & \text{otherwise.}
		\end{cases}
	\] 
\end{defn}

The matrix $C_G$ which is obtained by removing the last row of $B_G$ is called the \textbf{reduced incidence matrix}. The incidence matrix and reduced incidence matrix both satisfy Proposition~\ref{incidence-reduced-incidence-matrix-is-good}. In this sense, these two matrices capture the property of being a cycle in the graph $G$.  

\begin{prop} \label{incidence-reduced-incidence-matrix-is-good}
	For a graph $G = (V, E)$ the incidence matrix $B_G$ and reduced incidence matrix $C_G$ both satisfy the property that a set of columns is linearly dependent if and only if the graph formed by the corresponding edges contains a cycle. 
\end{prop}

\begin{proof}
	See Example 5.4 of \cite{bapat_raghavan_1997}. 
\end{proof}

The incidence matrix and Laplacian of a matrix are related by Proposition~\ref{incidence-and-laplacian-relation-prop}. This relation will reappear in Section~\ref{sec:stanley-matroid-inequality} when we prove Theorem~\ref{stanley-equality-cases-matroid-thm} in the case of graphic matroids. 

\begin{prop} \label{incidence-and-laplacian-relation-prop}
	For a graph $G = (V, E)$, let $L$ be its Laplacian matrix and let $B$ be its incidence matrix. Then, we have that $L = BB^T$. 
\end{prop}

\begin{proof}
	For every $e = \{v_i, v_j\} \in E$ with $v_i < v_j$, we define $\pi_e(v_i) = -1$ and $\pi_e (v_j) = 1$. The function $\pi_e$ indicates which of the two vertices in an edge is the smaller vertex with respect to the total ordering on the vertices. We can compute that 
	\[
		\left ( BB^T \right )_{v, w} = \sum_{e \in E : v, w \in e} \pi_e(v) \pi_e(w). 
	\]
	When $v = w$, then each summand is equal to $1$ and we get exactly $\deg (v_i)$. If $v$ and $w$ are not adjacent, then the sum is empty and is trivially is equal to zero. Otherwise, each summand is $-1$ and there are $E_{v_i, v_j}$ elements in this sum. This suffices for the proof. 
\end{proof}

\section{Matroids} \label{sec:matroids}

Matroids are combinatorial objects which abstract and generalize several properties in linear algebra, graph theory, and geometry. For example, it generalizes the notion of cyclelessness in graphs, the notion of linear independence in vector spaces, the poset structure of linear subspaces in a vector space, and concurrence in configurations of points and lines. Despite the seemingly limited conditions imposed on a matroid, matroids successfully describe many objects relevant to other areas in mathematics such as topology \cite{gelfand}, graph theory \cite{milnor-numbers}, combinatorial optimization \cite{optimization}, algebraic geometry \cite{schubert-cell}, and convex geometry \cite{matroid-polytope}. In this section, we provide an introduction to the basic notions in matroid theory that we will need in the remainder of this thesis. We use the excellent monographs \cite{10.5555/1197093} and \cite{welsh} as our main references for this theory. We begin by describing matroids as a set with a collection of independent sets. 

\subsection{Independent Sets} \label{sec:independent-sets}

As motivation for the definition of a matroid, we first describe some properties of linearly independent vectors in a vector space. Let $V$ be a (finite-dimensional) vector space and let $S \subseteq V$ be a subset of linearly independent vectors. Note that any subset of $S$ will also consist of linearly independent vectors. If $T \subseteq V$ is another set of linearly independent vectors with $|S| < |T|$, then there always exists a vector $v \in T \backslash S$ such that $S \cup v$ is linearly independent. To prove this fact, suppose for the sake of contradiction that there is not $v \in T \backslash S$ for which $S \cup v$ is linearly independent. This implies that $\text{span} (T) \subseteq \text{span} (S)$. Since our ambient vector space is finite-dimensional, by comparing dimensions we reach a contradiction. We abstract these properties in Defnition~\ref{def:matroid-independent-sets} and call the resulting object a matroid.

\begin{defn} \label{def:matroid-independent-sets}
	A \textbf{matroid} is an ordered pair $M = (E, \mcI)$ consisting of a finite set $E$ and a collection of subsets $\mathcal{I} \subseteq 2^E$ which satisfy the following three properties:
	\begin{enumerate}
		\item[(I1)] $\emptyset \in \mathcal{I}$.
		\item[(I2)] If $X \subseteq Y$ and $Y \in \mathcal{I}$, then $X \in \mathcal{I}$.
		\item[(I3)] If $X, Y \in \mathcal{I}$ and $|X| > |Y|$, then there exists some element $e \in X \backslash Y$ such that $Y \cup \{e\} \in \mathcal{I}$.
	\end{enumerate}
\end{defn}

The set $E$ is called the \textbf{ground set} of the matroid and the collection of subsets $\mathcal{I}$ are called \textbf{independent sets}. This terminology is motivated by Example~\ref{linear-matroid} where the indepedent sets consist exactly of linearly independent subsets of our ground set. Condition (I1) is referred to as the non-emptiness axiom, condition (I2) is referred to as the hereditary axiom, and condition (I3) is referred to as the exchange axiom. We say two matroids $M_1$ and $M_2$ are isomorphic if there is a bijection between their ground sets which induces a one-to-one correspondence between their independent sets. 

\begin{example} [Linear Matroids] \label{linear-matroid}
	Let $V$ be a $k$-vector space and let $E = \{v_1, \ldots, v_n\}$ be a finite set of vectors from $V$. Let $\mcI$ consist of all subsets of $E$ which are linearly independent. Then, the ordered pair $(E, \mcI)$ forms a matroid. For a set of linearly independent vectors or equivalently a matrix $A$, we let $M(A)$ denote the linear matroid generated by $A$. We call a matroid $M$ a \textbf{linear matroid} if there exists a matrix $A$ such that $M \cong M(A)$. When there is a $k$-vector space $V$ such that the matroid $M$ is generated by a set of vectors in $V$, we say that $M$ is \textbf{representable} over $k$. 
\end{example}

\begin{example} [Graphic Matroids] \label{example:graphic-matroid}
	Let $G = (V, E)$ be a graph and let $\mcI$ be the collection of subsets of $E$ which consist of edges such that the subgraph on $V$ with these edges is a forest (contains no cycles). The ordered pair $(E, \mcI)$ forms a matroid called the \textbf{cycle matroid} of the graph $G$. If $G$ is a graph, we let $M(G)$ be the cycle matroid associated to the graph $G$. We call any matroid isomorphic to $M(G)$ for some graph $G$ a \textbf{graphic matroid}. 
\end{example}

It is not difficult to directly show that the graphic matroid associated to a graph as in Example~\ref{example:graphic-matroid} is a matroid. We can show this fact indirectly using Proposition~\ref{graphic-are-linear}. Specifically, this proposition show that graphic matroids are also linear matroids.

\begin{prop}[Proposition 1.2.9 in \cite{10.5555/1197093}] \label{prop:graphic-matroids-connected}
	Let $M$ be a graphic matroid. Then $M \cong M(G)$ for some connected graph $G$. 
\end{prop}
\begin{proof}
	Since $M$ is graph, there exists a graph $H$ (not necessarily connected) such that $M \cong M(H)$. Take the connected components of $H$ and pick a one vertex from each of them. By identifying these vertices, we get a connected graph $G$ such that $M \cong M(G)$. This suffices for the proof. 
\end{proof}

\begin{prop} \label{graphic-are-linear}
	Let $G = (V, E)$ be a graph and let $A$ be its incidence matrix or reduced incidence matrix. Then $M(G) \cong M(A)$. 
\end{prop}

\begin{proof}
	This follows immediately from Proposition~\ref{incidence-reduced-incidence-matrix-is-good}. 
\end{proof}

\begin{example}[Uniform Matroids]
	For any integers $0 \leq k \leq n$, we can define the \textbf{uniform matroid} $U_{k, n}$ which is the matroid on $[n]$ where the independent sets consist of all subsets of $[n]$ of size at most $k$. The matroid $U_{n, n}$ is called the \textbf{boolean matroid} or \textbf{free matroid} on $n$ elements. 
\end{example}

Given a matroid $M = (E, \mathcal{I})$, we call a subset $X \subseteq E$ a \textbf{dependent set} if and only if $X \notin \mathcal{I}$. Any minimal dependent set a \textbf{circuit}. An alternative way to define matroids is through circuits. The collection of circuits of a matroid satisfy some properties, and any collection of subsets which satisfy these properties will be the collection of circuits of a unique matroid (see Corollary 1.1.5 in \cite{10.5555/1197093}). There are many other cryptomorphic definitions for matroids. In this thesis, we will not concern ourselves with proving the equivalence between these definitions. In the next section, we will define a dual notion of circuits called bases.

\subsection{Bases} \label{sec:bases}

We call an independent set $B \in \mathcal{I}$ a \textbf{basis} if it is a maximal independent set. From the properties of independent sets of a matroid, we can deduce that all bases have the same number of elements. Indeed, if $B_1$ and $B_2$ are bases satisfying $|B_1| < |B_2|$, then there must exist some element $e \in B_2 \backslash B_1$ satisfying $B_1 \cup \{e\} \in \mcI$. But, this means that $B_1 \cup \{e\}$ is an independent set strictly larger than $B_1$. This contradicts the maximality of $B_1$ and implies that all bases contain the same number of elements. 

\begin{prop} \label{prop:matroid-bases}
	Let $M = (E, \mathcal{I})$ be a matroid and let $\mcB$ be the collection of bases. The collection $\mcB$ satisfies the following three properties:
	\begin{enumerate}[label = (\alph*)]
		\item $\mcB$ is non-empty.
		\item If $B_1$ and $B_2$ are members of $\mcB$ and $x \in B_1 \backslash B_2$, then there is an element $y$ of $B_2 \backslash B_1$ such that $(B_1 - x) \cup y \in \mcB$. 
		\item If $B_1$ and $B_2$ are members of $\mcB$ and $x \in B_1 \backslash B_2$, then there is an element of $y \in B_2 \backslash B_1$ such that $(B_2 - y) \cup x \in \mcB$. 
	\end{enumerate}
\end{prop}

\begin{proof}
	See Lemma 1.2.2 in \cite{10.5555/1197093}. 
\end{proof}

For any matroid $M = (E, \mcB)$ where $\mcB = \mcB(M)$ are the bases of $M$, we can define the \textbf{basis generating polynomial} of a matroid $M = (E, \mcB)$ by
\[
	f_M (x) := \sum_{B \in \mcB} x^B \in \RR[x_e : e \in E].
\]
The polynomial $f_M$ is a homomogeneous polynomial of degree $d$ where $d$ is the size of a basis in $M$. In Section~\ref{sec:rank}, we define the number $d$ as the \textbf{rank} or \textbf{dimension} of the matroid $M$. 

\subsection{Rank Functions} \label{sec:rank}

Recall the motivating example of a matroid as a subset of vectors $S \subseteq V$ where vectors are independent if and only if they are linearly independent. In this example, there is a natural notion of dimension or rank. For any set of vectors, we can define the rank of this set to be the dimension of the vector subspace spanned by these vectors. This defines a function from the subsets of $S$ to the non-negative integers with a few properties. We will abstract these properties in Definition~\ref{def:rank}. 

\begin{defn} \label{def:rank}
	For any matroid $M = (E, \mcI)$, we define its \textbf{rank} function $\rank_M : 2^E \to \NN$ to be equal to
	\[
		\rank_M(X) := \max \{|I| : I \in \mathcal{I}, I \subseteq X\}.
	\]
	When the matroid $M$ is clear from context, we will sometimes write $\rank := \rank_M$. The rank function satisfies the following three properties:
	\begin{enumerate}
		\item[(R1)] If $X \subseteq E$, then $0 \leq r(X) \leq |X|$. 
		\item[(R2)] If $X \subseteq Y \subseteq E$, then $r(X) \leq r(Y)$. 
		\item[(R3)] If $X$ and $Y$ are subsets of $E$, then $r(X \cup Y) + r(X \cap Y) \leq r(X) + r(Y)$.
	\end{enumerate}
\end{defn}

For a proof of (R1), (R2), and (R3) in Definition~\ref{def:rank}, we refer the reader to Lemma 1.3.1 in \cite{10.5555/1197093}. Many properties of rank functions which are satisfied in the case of linear matroids and the vector space picture are satisfied in general. For example, for any subset $S \subset E$ and $e \in E \backslash S$, we can prove that $\rank_M(S \cup e) \in \{\rank_M(S), \rank_M(S) + 1\}$. 

\subsection{Closure and Flats}

In the case of a linear matroid, the rank of a set of vectors is equal to the dimension of the vector subspace spanned by our vectors. We can then study our matroid through the subspaces spanned by its vectors. We define a subset of vectors to be closed if the span of these vectors contain no other vectors in our ground set. In this sense, the vector space spanned by a set of vectors is the closure of the set. We abstract the properties of this closure operation in Definition~\ref{def:closure}. 

\begin{defn} \label{def:closure}
	For any matroid $M = (E, \mcI)$, we define its closure operator $\clo_M : 2^E \to 2^E$ to be 
	\[
		\clo_M (X) := \overline{X} = \{x \in E : \rank (X \cup \{x\}) = \rank (X) \}.
	\]
	When the underlying matroid $M$ is clear from context, we also write $\clo := \clo_M$. The closure operator satisfies the following four properties:
	\begin{enumerate}
		\item[(C1)] If $X \subseteq E$, then $X \subseteq \clo_M (X)$. 
		\item[(C2)] If $X \subseteq Y \subseteq E$, then $\clo_M (X) \subseteq \clo_M (Y)$. 
		\item[(C3)] If $X \subseteq E$, then $\clo_M (\clo_M(X)) = \clo_M(X)$. 
		\item[(C4)] If $X \subseteq E$ and $x \in E$, and $y \in \clo_M(X \cup \{x\}) \backslash \clo_M(X)$, then $x \in \clo_M(X \cup \{y\})$. 
	\end{enumerate}
\end{defn}

For a proof of (C1), (C2), (C3), and (C4) in Definition~\ref{def:closure}, see Lemma 1.4.3 in \cite{10.5555/1197093}. From this definition, it is not difficult to check that the closure operation in a linear matroid returns the collection of all vectors contained in a given subspace. If $X \subseteq E$ satisfies $X = \clo_M(X)$, then we say $X$ is a \textbf{closed set} or \textbf{flat}. For any matroid $M = (E, \mcI)$, let $\mcL(M)$ denote the partially ordered set consisting of the flats of $M$ equipped with set inclusion. From Theorem 1.7.5 in \cite{10.5555/1197093}, we have that $\mcL(M)$ is a geometric lattice with join, meet, and rank function (as a graded poset) given by 
\begin{align*}
	X \vee Y & := \clo_M(X \cup Y) \\
	X \wedge Y & := X \cap Y \\
	\rank_{\mcL} (X) & := \rank_M(X).
\end{align*}
In fact, from Theorem 1.7.5 in \cite{10.5555/1197093}, a lattice is geometric if and only if it is the lattice of flats of a matroid. A geometric lattice determines a matroid up to simplification. We define the notion of simplification in Section~\ref{sec:loops-parallel-simple}.

\subsection{Loops, Parallelism, and Simplification} \label{sec:loops-parallel-simple}

Let $M = (E, \mcI)$ be a matroid. We say an element of the ground set $e \in E$ is a \textbf{loop} in $M$ if $\{e\}$ is a dependent set. Equivalently, $e \in E$ is a loop if $\rank (\{e\}) = 0$. In a graphic matroid, this corresponds to a loop in the underlying graph. We define $E_0$ as the set of loops. An element $e \in E$ in the ground set is called a \textbf{coloop} if it is contained in every basis. We say that two elements $x_1, x_2 \in E \backslash E_0$ are \textbf{parallel} if and only if $\{x_1, x_2\} \notin \mcI$. When this happens, we write $x_1 \sim_M x_2$. Equivalently, $x_1 \sim_M x_2$ if and only if $\rank (\{x_1, x_2\}) = 1$. For a thorough treatment of loops and parallelism, we refer the reader to Section 1.4 of \cite{welsh}.

\begin{prop} \label{parallelism-is-equivalence-relation}
	Let $M = (E, \mcI)$ be a matroid. Then $\sim_M$ is an equivalence relation on $E \backslash E_0$. 
\end{prop}

\begin{proof}
	For any $x \in E \backslash E_0$, we have $\rank (\{x, x\}) = \rank (\{x\}) = 1$ since $x \notin E_0$. Thus $x \sim_M x$. For $x, y \in E \backslash E_0$, we have $\rank (\{x, y\}) = \rank (\{y, x\})$. This proves that $\sim_M$ is reflexive. To prove transitivity, suppose that we have elements $x, y, z \in E$ that satisfy $x \sim_M y$ and $y \sim_M z$. If $x = y$ or $y = z$, then we automatically get $x \sim_M z$. Suppose that they are all distinct. Then, we have $\{x, y\}, \{y, z\} \not\in \mcI$. For the sake of contradiction, suppose that $\{x, z\} \in \mcI$. From (I3) applied to $\{x, z\}$ and $\{y\}$, we have that either $\{x, y\} \in \mcI$ or $\{y, z\} \in \mcI$. This is a contradiction. This proves that $\sim_M$ is transitive and is an equivalence relation. 
\end{proof}

From Proposition~\ref{parallelism-is-equivalence-relation}, for any non-loop $e \in E \backslash E_0$, we can consider its equivalence class $[e] := [e]_M$ under the equivalence relation $\sim_M$. We call the equivalence class $[e]$ the \textbf{parallel class} of $e$. Then, we can partition the ground set $E$ into $E = E_1 \sqcup E_2 \sqcup \ldots \sqcup E_s \sqcup E_0$ where $E_1, \ldots, E_s$ are the distinct parallel classes and $E_0$ are the loops. In Proposition~\ref{characterization-of-atoms-of-flats}, we give a characterization of the atoms of the lattice of flats in terms of the parallel classes and loops. 

\begin{prop} \label{characterization-of-atoms-of-flats}
	Let $M = (E, \mcI)$ be a matroid and let $e \in E$ be an element of the matroid that is not a loop. Then, $\overline{e} = [e] \cup E_0$.
\end{prop}

\begin{proof}
	Since any independent set contains no loops, we know that $E_0 \subseteq \overline{e}$. For any $f \in [e]$, we have that $\rank (\{e, f\}) = 1 = \rank (\{e\})$ by definition of $\sim_M$. This proves that $[e] \subseteq \overline{e}$. To prove the opposite inclusion, let $f \in \overline{e}$. Then $\rank(\{e, f\}) = 1$. If $f$ is a loop, then $f \in E_0$. Otherwise, $f \sim_M e$ and $f \in [e]$. This suffices for the proof of the proposition. 
\end{proof}

We call a matroid $\textbf{simple}$ if it contains no loops and no parallel elements. To every matroid $M$, we can associate a simple matroid $\widetilde{M}$ called the \textbf{simplification} of the matroid $M$. For any matroid $M = (E, \mcI)$, let $E(\widetilde{M})$ be the set of rank $1$ flats of $M$. In particular, if $x_1, \ldots, x_s$ are the representatives for the parallel classes in $M$, then we can define the ground set of $\widetilde{M}$ as 
\[
	E \left ( \widetilde{M} \right ) := \{[x_1], \ldots, [x_s]\}.	
\] 
We can define a map $\pi_M : E \backslash E_0 \to E(\widetilde{M})$ to be the map which sends $e \in E \backslash E_0$ to the rank one flat $\overline{e}$. For any $\alpha \in E(\widetilde{M})$, we define $\text{fiber}(\alpha) := \pi_e^{-1}(\alpha)$. In other words, this consists of the elements of $E$ in the parallel class $\alpha$. We can define the following collection of subsets of $E(\widetilde{M})$:
\[
	\mcI \left ( \widetilde{M} \right ) := \left \{ \{[x_{i_1}], \ldots, [x_{i_l}] \} : \{x_{i_1}, \ldots, x_{i_l}\} \in \mcI(M) \right \}.
\]
We would like $\mcI(\widetilde{M})$ to be a collection of independent sets for $\widetilde{M}$. To prove that this is the case, it suffices to prove Proposition~\ref{prop-parallelism-and-independence}. 

\begin{prop} \label{prop-parallelism-and-independence}
	Let $M = (E, \mcI)$ be a matroid and let $e \sim f$ be two parallel elements. If $I \in \mcI$ is an independent set which contains $e$, then $(I \backslash e) \cup f \in \mcI$. In other words, in an independent set we can freely replace elements by parallel ones without breaking the independence structure. 
\end{prop}

\begin{proof}
	By definition, $e$ and $f$ are not loops. By applying the exchange axiom for independent sets repeatedly for $f$ and $I$, we must have that $(I \backslash e) \cup f$ is independent. This is because we can never exchange $e$ from $I$ to the independent set containing $f$ due to the fact that $\{e, f\}$ is dependent. 
\end{proof}

As a consequence of Proposition~\ref{prop-parallelism-and-independence}, we know that $\mcI(\widetilde{M})$ provides a well-defined collection of indepedent sets on the ground set $E(\widetilde{M})$. Given any matroid $M = (E, \mcI)$, we define the matroid $\widetilde{M} = (E(\widetilde{M}), \mcI(\widetilde{M}))$ to be the \textbf{simplification} of $M$.

\begin{remark}
	According to James Oxley in \cite{10.5555/1197093}, the notation $\widetilde{M}$ for the simplification of a matroid is defunct. Currently, the convention is to use $\text{si}(M)$ for the simplification.  
\end{remark} 

\subsection{Restriction, Deletion, and Contraction} \label{sec:contraction-deletion}

Given a graph, there are well-defined notions of edge deletion and edge contraction. In this section, we generalize these graph operations to general matroids. When we apply this generalization of deletion and contraction to graphic matroids, we recover the graph-theoretic model of deletion and contraction. 

\begin{defn}[Restriction, Deletion, and Contraction] \label{contraction-deletion}
	Let $M = (E, \mcI)$ be a matroid and let $T \subseteq E$ be a subset. We call $M|_T$ the \textbf{restriction} of $M$ on $T$. This is defined as the matroid on $T$ with independent sets given by 
	\[
		\mcI (M|_T) := \{I \in \mcI (M) : I \subseteq T\}.
	\]
	We call the matroid $M \backslash T$ the \textbf{deletion} of $T$ from $M$. This is defined to be $M|_{E \backslash T}$, the restriction of $M$ on $E \backslash T$. Let $B_T$ be a basis of $M|_T$. We call $M / T$ the \textbf{contraction} of $M$ by $T$. This is the matroid on $E \backslash T$ with independent sets given by
	\begin{align*}
		\mcI(M/T) & := \{I \subset E \backslash T: I \cup B_T \in \mcI(M) \}.
	\end{align*}
\end{defn}

The definition of the matroid contraction is well-defined because it is independent from our choice of basis of $T$. In fact, if we know about matroid duals, we can define contraction without choosing a basis: the contraction $M / T$ is defined to be $(M^* \backslash T)^*$. We will not concern ourselves with the dual matroid and refer the interested reader to Section 3.1 in \cite{10.5555/1197093}. In Lemma~\ref{lem-contraction-deletion-basis-generating-polynomial}, we describe how contraction and deletion effect the basis generating polynomial. 

\begin{lem} \label{lem-contraction-deletion-basis-generating-polynomial}
	Let $M = (E, \mcB)$ be a matroid with basis generating polynomial $f_M$.
	\begin{enumerate}[label = (\alph*)]
		\item If $e \in E$ is a loop, then $\partial_e f_M = 0$. 
		\item If $e \in E$ is not a loop, then $\partial_e f_M = f_{M/e}$. 
		\item If $e \in E$ is not a coloop, then $f_M = x_e f_{M/e} + f_{M \backslash e}$
	\end{enumerate}
\end{lem}

\begin{proof}
	Since every basis contains no loops, we have $\partial_e f_M = 0$ whenever $e$ is a loop. The second claim follows from the fact that the remaining monomials in $\partial_e f_M$ will correspond to sets of the form $B \backslash e$ where $e \in B$ and $B$ is a basis of $M$. This is exactly the set of bases of $M / e$. For the third claim, this follows from the fact that the monomials in $f_M$ which do not contain $x_e$ will correspond to bases of $M$ which do not contain $e$. These are exactly the bases of $M \backslash e$. This implies that we can write $f_M = x_e p + f_{M \backslash e}$ where $p \in \RR[x_e : e \in E]$ and $p$ contains no monomial with $x_e$. Taking the partial derivative with respect to $x_e$, we get $p = f_{M / e}$ from (a). This suffices for the proof. 
\end{proof}

\subsection{Matroid Sum and Truncation}

Given two matroids, there is a notion of adding these two matroids to produce another. There is also a notion of truncating a matroid so that the rank lowers by $1$. The first operation is called the matroid sum of two matroids while the second operation is called the truncation of a matroid. \\

To describe the matroid sum, let $M = (E, \mathcal{I}_M)$ and $N = (F, \mathcal{I}_N)$ be matroids. We define the \textbf{matroid sum} of $M$ and $N$ to be the matroid $M \oplus N$ on the set $E \sqcup F$ such that the independent sets of $M\oplus N$ are sets of $E \sqcup F$ of the form $I \cup J$ where $I \in \mcI_M$ and $J \in \mcI_N$. In other words, we define
\[
	\mcI(M+N) := \{I \cup J : I \in \mcI(M), J \in \mcI(N)\}.
\]
It is not difficult to see that this produces a well-defined collection of independent sets on the set $E \sqcup F$. Now, we describe the truncation of a matroid. For a matroid $M = (E, \mcI)$, we define $TM$ to be the \textbf{truncation} of $M$. This is the matroid on the same ground set $E$ with independent sets given by 
\[
	\mcI(TM) := \{I \in \mcI (M) : |I| \leq \rank_M(M) - 1\}.
\]
This collection of sets inherits the properties of independent sets from $\mcI(M)$. Thus, our definition $TM$ gives a well-defined matroid. We can repeatedly apply our truncation operation to get a matroid $T^k(M)$ which lowers the rank of $M$ by $k$. 

\subsection{Regular Matroids} \label{sec:regular-matroids}

In this section, we study a subclass of matroids called regular matroids. As a preliminary definition, these are the matroids which are representable over any field. The content of Definition~\ref{def:regular-matroid} illustrates the many different equivalent definitions of regular matroids. 

\begin{defn} \label{def:regular-matroid}
	We say that a matroid $M$ is \textbf{regular} if it satisfies any of the following equivalent conditions:
	\begin{enumerate}[label = (\alph*)]
		\item $M$ is representable over any field. 
		\item $M$ is $\FF_2$ and $\FF_3$ representable. 
		\item $M$ is representable over $\FF_2$ and $k$ where $k$ is any field of characteristic other than $2$.
		\item $M$ is representable over $\RR$ by a totally unimodular matrix. 
	\end{enumerate}
\end{defn}

For a proof of the equivalences of the conditions in Definition~\ref{def:regular-matroid}, we refer the reader to Theorem 5.16 in \cite{10.5555/1197093}. In the definition, we define a \textbf{totally unimodular matrix} to be a real matrix for which every square submatrix has determinant in the set $\{0, \pm 1\}$. In this thesis, we use the terminology totally unimodular and unimodular interchangeably even though they have different meanings in the literature. The main property that we will use for regular matroids is (d) in Definition~\ref{def:regular-matroid}. Note that when we represent $M$ by a unimodular matrix over $\RR$, property (d) does not indicate how small the dimension of the ambient space can be made. Ideally, we want the column vectors of the matrix to lie in a $d$-dimensional real vector space where $d$ is the rank of the matroid. This is the minimum possible dimension of an ambient vector space for which a matroid can be embedded. Fortunately, Theorem~\ref{thm-representing-regular-in-d-dim} implies that we can achieve this minimum dimension.

\begin{thm}[Lemma 2.2.21 in \cite{10.5555/1197093}] \label{thm-representing-regular-in-d-dim}
	Let $\{e_1, \ldots, e_r\}$ be a basis of a matroid $M$ of non-zero rank. Then $M$ is regular if and only if there is a totally unimodular matrix $[I_r | D]$ representing $M$ over $\RR$ whose first $r$ columns are labelled, in order, $e_1, e_2, \ldots, e_r$. 
\end{thm}

Thus, for any regular matroid $M$ of rank $n$, there is an injection $v : E(M) \to \RR^n$ where the columns $\{v(e) : e \in M\}$ form a totally unimodular matrix. We call such a map a \textbf{unimodular coordinatization} of $M$. Not only does it assign unimodular coordinates to our matroid, but it does so in a way which minimizes the dimension of the ambient space. From Theorem~\ref{thm-representing-regular-in-d-dim}, we know that such a coordinatization exists for all regular matroids. 

\begin{prop} \label{graphic-matroid-is-regular-prop}
	Let $G = (V, E)$ be a graph and let $M = M(G)$ be the graphic matroid associated with $G$. Then, $M$ is also the linear matroid generated by the incidence matrix and reduced incidence matrix of the graph. As a consequence, graphic matroids are regular.
\end{prop}

\begin{proof}
	From Proposition~\ref{incidence-reduced-incidence-matrix-is-good}, it suffices to prove that the incidence matrix is totally unimodular. But this follows from Lemma 4.4 in \cite{fujishige}. 
\end{proof}

In the proof of Proposition~\ref{graphic-matroid-is-regular-prop}, we have used the fact that the incidence and reduced incidence matrix of a graph are totally unimodular. For any graphic matroid $M$, Proposition~\ref{prop:graphic-matroids-connected} gives us a connected graph $G$ so that $M \cong M(G)$. In this case, the rank of $M$ is $|V(G)| - 1$. Hence, the reduced incidence matrix of $G$ is a unimodular coordinatization of $M$.  

\section{Mixed Discriminants} \label{sec:mixed-discriminants}

In this section, we discuss a symmetric multilinear form called the mixed discriminant which arises as the polarization form of the determinant. The notion of mixed discriminants was introduced by Alexandrov in his paper \cite{aleksandrov} where he uses mixed discriminants to study the Alexandrov-Fenchel inequality. Our treatment of mixed discriminants is inspired by the exposition in \cite{bapat_raghavan_1997}. Using Cholesky factorization, we will show how to compute mixed discriminants for rank $1$ matrices. This will allow us to extend the computation to all positive semi-definite matrices. 

\begin{defn} \label{defn-mixed-discriminant}
	let $n \geq 1$ be a positive integer. Suppose that for each $k \in [n]$, we are given real matrix $A_k := \left (a_{ij}^{(k)} \right )_{i, j = 1}^n \in \RR^{n \times n}$. Then, we define the \textbf{mixed discriminant} of the collection $(A_1, \ldots, A_n)$ to be 
	\begin{equation} \label{eqn:mixed}
		\mathsf{D}(A^1, \ldots, A^n) := \frac{1}{n!} \sum_{\sigma \in S_n} 
		\det 
		\begin{bmatrix} 
			a_{11}^{\sigma(1)} & \ldots & a_{1n}^{\sigma(n)} \\
			\vdots & \ddots & \vdots \\
			a_{n1}^{\sigma(1)} & \ldots & a_{nn}^{\sigma(n)}
		\end{bmatrix} = \frac{1}{n!} \sum_{\sigma \in S_n} \text{Det} \left (v^{\sigma(1)}_{1}, \ldots, v_{n}^{\sigma(n)} \right ).
	\end{equation}
	In Equation~\ref{eqn:mixed}, the group $S_n$ is the symmetric group on $n$ letters and $\text{Det}$ refers to the determinant as a multilinear $n$-form on $(\RR^n)^n$. 
\end{defn}

From Definition~\ref{defn-mixed-discriminant}, the mixed discriminant is multilinear and symmetric in its entries. Moreover, for any symmetric matrix $A$, we have $\mathsf{D}(A, \ldots, A) = \det (A)$. 

\subsection{Polarization Form of a Homogeneous Polynomial} \label{sec:polarization-form}

Let $f \in \RR[x_1, \ldots, x_n]$ be an arbitrary homogeneous polynomial of degree $d$. For any choice of vectors $v_1, \ldots, v_d \in \RR^n$ we can study the coefficients of $f(\lambda_1 v_1 + \ldots + \lambda_d v_d)$ as a polynomial in $\lambda_1, \ldots, \lambda_d$. In Definition~\ref{def:polarization-form}, we define the polarization form associated to a homogeneous polynomial. In the literature, this form is also called the complete homogeneous form. These objects are briefly mentioned in Section 3.2 of \cite{lie-groups}, Section 5.5 of \cite{schneider_2013}, and Section 4.1 of \cite{lorentzian-polynomials}. In this section, we hope to provide an accessible and self-contained exposition of the properties of the polarization form.

\begin{defn} \label{def:polarization-form}
	Let $k$ be a field of characteristic $0$. Let $f \in k[x_1, \ldots, x_n]$ be a homogeneous polynomial of degree $d$. We define the \textbf{polarization form} or \textbf{complete homogeneous form} of $f$ to be the function $F_f : (k^n)^d \to k$ defined by 
	\[
		F_f (v_1, \ldots, v_d) := \frac{1}{d!} \frac{\partial}{\partial x_1} \ldots \frac{\partial}{\partial x_d} f(x_1 v_1 + \ldots + x_d v_d).
	\] 
\end{defn}

From the definition, we can see that the form is $k$-multilinear, symmetric, and $F_f(v, \ldots, v) = f(v)$ for all $v \in k^n$. Let $f \in k[x_1, \ldots, x_n]$ be a homogeneous polynomial of degree $d$. Then, we can write this polynomial in the form 
\[
	f(x_1, \ldots, x_n) = \sum_{\alpha_1, \ldots, \alpha_d = 1}^n c_{\alpha_1, \ldots, \alpha_d} \cdot x_{\alpha_1} \ldots x_{\alpha_d}.
\]
where $c_{\alpha_1, \ldots, \alpha_d}$ is symmetric in $\alpha_1, \ldots, \alpha_d$. Let $v_1, \ldots, v_d\in k^n$ be vectors where $v_i = v_1^{(i)} e_1 + \ldots + v_n^{(i)} e_n$ for all $1 \leq i \leq d$ and $e_1, \ldots, e_n$ is the standard basis in $k^n$. We can define $y_i := \sum_{j = 1}^d x_j v_i^{(j)}$ for all $1 \leq i \leq n$. Then, we have $f(x_1 v_1 + \ldots + x_d v_d) = f(y_1, \ldots, y_n)$. This allows us to compute
\begin{align*}
	F_f(v_1, \ldots, v_d) & = \frac{1}{d!}[x_1 \ldots x_d] f(y_1, \ldots, y_n) \\
	& = \frac{1}{d!} [x_1 \ldots x_d] \sum_{\alpha_1, \ldots, \alpha_d = 1}^n c_{\alpha_1, \ldots, \alpha_d} y_{\alpha_1} \ldots y_{\alpha_d} \\
	& = \frac{1}{d!} \sum_{\sigma \in S_n} \sum_{\alpha_1, \ldots, \alpha_d = 1}^n c_{\alpha_{\sigma(1)}, \ldots, \alpha_{\sigma(d)}} v_{\alpha_{\sigma(1)}}^{(1)} \ldots v_{\alpha_{\sigma(d)}}^{(d)} \\
	& = \sum_{\alpha_1, \ldots, \alpha_d = 1}^n c_{\alpha_1, \ldots, \alpha_d} v_{\alpha_1}^{(1)} \ldots v_{\alpha_d}^{(d)}.
\end{align*}

This gives us a formula for the polarization form explicitly in the vectors $v_1, \ldots, v_d$ and the coefficients of $f$. Using this explicit formula, we prove the well-known polarization formula given in Theorem~\ref{polariziation-identity}.

\begin{thm}[Polarization Identity] \label{polariziation-identity}
	Let $k$ be a field of characteristic $0$. Let $v_1, \ldots, v_m \in k^n$ be arbitrary vectors. Then, we have the identity
	\[
		f(x_1 v_1 + \ldots + x_m v_m ) = \sum_{i_1, \ldots, i_d = 1}^m F_f(v_{i_1}, \ldots, v_{i_d}) \cdot x_{i_1} \ldots x_{i_d}.
	\]
\end{thm}
\begin{proof}
	For all $1 \leq i \leq n$, we have $v_i = v_i^{(1)} e_1 + \ldots + v_i^{(n)} e_n$ for some constants $v_i^{(j)}$. Let $y_i = \sum_{j = 1}^d x_j v_i^{(j)}$ for all $1 \leq i \leq n$. Then, we have that 
	\begin{align*}
		f(x_1 v_1 + \ldots + v_m v_m) & = f(y_1, \ldots, y_m) \\
		& = \sum_{\alpha_1, \ldots, \alpha_d = 1}^n c_{\alpha_1, \ldots, \alpha_d} \sum_{i_1, \ldots, i_d = 1}^m x_{i_1} \ldots x_{i_d} \cdot v_{\alpha_1}^{(i_1)} \ldots v_{\alpha_d}^{(i_d)} \\
		& = \sum_{i_1, \ldots, i_d = 1}^m \left ( \sum_{\alpha_1, \ldots, \alpha_d = 1}^n c_{\alpha_1, \ldots, \alpha_d} v_{\alpha_1}^{(i_1)} \ldots v_{\alpha_d}^{(i_d)}\right ) x_{i_1} \ldots x_{i_d} \\
		& = \sum_{i_1, \ldots, i_d = 1}^m F_f(v_{i_1}, \ldots, v_{i_d}) \cdot x_{i_1} \ldots x_{i_d}.
	\end{align*}
	This suffices for the proof. 
\end{proof}

\subsection{Polarization for Mixed Discriminants}

We can view the mixed discriminant as the polarization form of the determinant function. This fact is the content of Theorem~\ref{polarization-mixed-discriminant}. For proofs of this result in other sources, we refer the reader to \cite{Zhao2015-kf} or \cite{schneider_2013}. 

\begin{thm} \label{polarization-mixed-discriminant}
	For $n \times n$ matrices $A_1, \ldots, A_m$ and $\lambda_1, \ldots, \lambda_m \in \RR$, the determinant of the linear combination $\lambda_1 A_1 + \ldots + \lambda_m A_m$ is a homogeneous polynomial of degree $n$ in the $\lambda_i$ and is given by 
	\[
		\det (\lambda_1 A_1 + \ldots + \lambda_m A_m) = \sum_{1 \leq i_1, \ldots, i_n \leq m} \mathsf{D}(A_{i_1}, \ldots, A_{i_n}) \lambda_{i_1} \ldots \lambda_{i_n}. 
	\]
\end{thm}

\begin{proof}
	If $v_1^{(i)}, \ldots$ and $v_n^{(i)}$ are the columns of $A_i$ for $1 \leq i \leq m$, we have that 
	\begin{align*}
		\det \left ( \sum_{i = 1}^m \lambda_i A_i \right ) & = \text{Det} \left ( \sum_{i = 1}^m \lambda_i v_1^{(i)}, \ldots, \sum_{i = 1}^m \lambda_i v_n^{(i)} \right ) \\
		& = \sum_{i_1, \ldots, i_n = 1}^m \text{Det} (\lambda_{i_1} v_{1}^{(i_1)}, \ldots, \lambda_{i_n} v_{n}^{(i_n)} ) \\
		& = \sum_{i_1, \ldots, i_n = 1}^m \lambda_{i_1} \ldots \lambda_{i_n} \cdot \text{Det}(v_{1}^{(i_1)}, \ldots, v_{n}^{(i_n)}).
	\end{align*}
	Looking at the coefficient in front of $\lambda_1^{r_1} \ldots \lambda_m^{r_m}$ where $r_1 + \ldots + r_m = n$, it is equal to 
	\begin{align*}
		[\lambda_1^{r_1} \ldots \lambda_M^{r_m}] \det \left ( \sum_{i = 1}^m \lambda_i A_i \right ) & = \frac{1}{(r_1)! \ldots (r_m)!} \sum_{\sigma \in S_n} \text{Det}(v_1^{i_{\sigma(1)}}, \ldots, v_n^{i_{\sigma(n)}}) \\
		& = \binom{n}{r_1,\ldots, r_m} \mathsf{D}(A_1[r_1], \ldots, A_m[r_m])
	\end{align*}
	where the multiset $\{i_1, \ldots, i_m\}$ is equal to $\{1[r_1], \ldots, m[r_m]\}.$ This coincides with the right hand side in Theorem~\ref{polarization-mixed-discriminant}.
\end{proof}

As an application of the polarization identity, we will compute the mixed discriminants of rank $1$ matrices. Since positive semi-definite matrices can be written as the sum of rank 1 matrices, this computation will extend to the mixed discriminants of positive semi-definite matrices. 

\begin{example}[Mixed discriminants of rank 1 matrices] \label{mixed-discriminant-calculation-for-rank-1}
	Let $x_1, \ldots, x_n \in \RR^n$ be real vectors. For any $\lambda_1, \ldots, \lambda_n > 0$, define the vectors $y_i = \sqrt{\lambda_i} x_i$. Let $X$ be the matrix with the $x_i$ as column vectors and let $Y$ be the matrices with the $y_i$ as column vectors. Then, we have that 
	\[
		\det \left ( \sum_{i = 1}^n \lambda_i x_i x_i^T \right ) = \det \left ( \sum_{i = 1}^n y_i y_i^T \right ) = \det (YY^T) = (\det (Y))^2 = \lambda_1 \ldots \lambda_n (\det (X))^2.
	\]
	From Theorem~\ref{polarization-mixed-discriminant}, we get the identity:
	\begin{equation} \label{eqn:mixed-discriminant-of-rank-1-matrices}
		\mathsf{D} (x_1x_1^T, \ldots, x_nx_n^T) = \frac{1}{n!} \left [ \text{Det} (x_1, \ldots, x_n) \right ]^2.
	\end{equation}
	From Equation~\ref{eqn:mixed-discriminant-of-rank-1-matrices}, we see that the mixed discriminant of rank 1 matrices is exactly a determinant of vectors generating the rank 1 matrices. In particular, it can serve as an indicator for when a collection of vectors forms a basis. This fact will be used in Section~\ref{sec:mixed-discriminant-stanley-basis} when we study Stanley's matroid inequality. 
\end{example}

\subsection{Positivity}

To extend the calculation in Example~\ref{mixed-discriminant-calculation-for-rank-1} to positive semi-definite matrices, we first recall the following linear algebra factorization result. 

\begin{thm} [Cholesky Factorization, Theorem 4.2.5 in \cite{GoluVanl96}] \label{cholesky}
	If $A \in \RR^{n \times n}$ is a symmetric positive definite matrix, then there exists a unique lower triangular $L \in \RR^{n \times n}$ with positive diagonal entries such that $A = LL^T$. When $A$ is positive semi-definite, then there exists a (not necessarily unique) lower triangular $L \in \RR^{n \times n}$ with $A = LL^T$. 
\end{thm}

Let $A$ be a positive semi-definite matrix. Theorem~\ref{cholesky} implies that there exists some matrix $X \in \RR^{n \times n}$ satisfying $A = X X^T$. We can decompose $X$ into $X = X_1 + \ldots + X_n$ where $X_i$ is the matrix with $x_i$ in the $i$th column and $0$ everywhere else. These matrices satisfy the properties that $X_i X_i^T = x_ix_i^T$ and $X_i X_j^T = 0$ when $i \neq j$. Thus, we have that 
\[
	A = X X^T = \left ( \sum_{i = 1}^n X_i \right ) \left ( \sum_{i = 1}^n X_i^T \right ) = \sum_{i = 1}^n x_ix_i^T.
\]
We have just proved that all positive semi-definite matrices can be written as the sum of rank $1$ matrices of the form $xx^T$. In Lemma~\ref{mixed-discriminant-explicit-formula}, we will use this fact to give an explicit formula for the mixed discriminant on positive semi-definite matrices. 

\begin{lem} [Lemma 5.2.1 in \cite{bapat_raghavan_1997}] \label{mixed-discriminant-explicit-formula}
	Let $A_1, \ldots, A_n$ be positive semi-definite $n \times n$ matrices, and suppose that $A_k = X_k X_k^T$ for each $k$. Then
	\[
		\mathsf{D} (A_1, \ldots, A_n) = \frac{1}{n!} \sum_{\substack{x_j \in X_j \\ 1 \leq j \leq n}} \left [ \Det(x_1, \ldots, x_n) \right ]^2
	\]
	where $x_j \in X_j$ means that $x_j$ is taken over the columns of $X_j$.
\end{lem}

\begin{proof}
	From the multi-linearity of the mixed discriminant, we have that 
	\[
		\mathsf{D}(A_1, \ldots, A_n) = \mathsf{D} \left ( \sum_{x_1 \in \mathsf{col}(X_1)} x_1x_1^T, \ldots, \sum_{x_n \in \mathsf{col}(X_n)} x_nx_n^T \right ) = \sum_{\substack{x_j \in X_j \\ 1 \leq j \leq n}} \mathsf{D} (x_1x_1^T, \ldots, x_nx_n^T).
	\]
	The lemma follows from the computation in Example~\ref{mixed-discriminant-calculation-for-rank-1}. 
\end{proof}

\begin{cor} \label{final-positivity-corollary}
	Let $A_1, \ldots, A_n$ be positive semi-definite $n \times n$ real symmetric matrices. Then 
	\[
		\mathsf{D}(A_1, \ldots, A_n) \geq 0.
	\]	
	If $A_1, \ldots, A_n$ are positive definite, then $\mathsf{D}(A_1, \ldots, A_n) > 0$. 
\end{cor}

\begin{proof}
	This follows from Lemma~\ref{mixed-discriminant-explicit-formula} and Theorem~\ref{cholesky}. 
\end{proof}

In Lemma~\ref{lem:mixed-discriminant-properties}, we compile a few properties of mixed discriminants that will prove useful in future chapters. For example, in the proof of Theorem~\ref{thm:only-inequality-mixed-discriminants}, the property Lemma~\ref{lem:mixed-discriminant-properties}(c) is used as an inductive tool. 

\begin{lem}[Properties of Mixed Discriminants] \label{lem:mixed-discriminant-properties}
	Let $M, M_1, \ldots, M_n$ be $n$-dimensional real symmetric matrices. Then, the following properties are true. 
	\begin{enumerate}[label = (\alph*)]
		\item $\mathsf{D}(M, \ldots, M) = \det (M)$. 
		\item $\mathsf{D}(U M_1 U^T, \ldots, U M_n U^T) = \det (UU^T) \mathsf{D}(M_1, \ldots, M_n)$ for any $n$ dimensional real matrix $U$.
		\item $\mathsf{D}(e_ie_i^T, M_1, \ldots, M_{n-1}) = \frac{1}{n} \mathsf{D}(M_1^{\langle i \rangle}, \ldots, M_{n-1}^{\langle i \rangle})$ where $e_1, \ldots, e_n$ is the standard orthonormal basis in $\RR^n$ and $M^{\langle i \rangle}$ is obtained from $M$ by removing its $i$th row and $i$th column. 
	\end{enumerate}
\end{lem}

\begin{proof}
	See Lemma 2.6 in \cite{bochner}. 
\end{proof}

\section{Convex Bodies} \label{sec:convex-bodies}

In this section, we review the notions of convexity and convex bodies. We use \cite{schneider_2013} as our main reference for the theory of convex bodies and Brunn-Minkowski theory. Recall that a subset $C \subseteq \RR^n$ is convex if for every $x, y \in C$, the line segment $[x, y]$ is contained in $C$. Even though convexity is quite a rigid condition to impose, there still exist topologically wild convex sets. For example, let $S \subseteq \mathbb{S}^{n-1}$ be an arbitrary subset of the unit sphere. Then, the set $S \cup \{x : \norm{x}_2 < 1\}$ is always a convex set. Measure theoretically, this same example gives examples of convex sets which are not even Borel measurable. In this thesis, we will only consider a subclass of convex sets called convex bodies. A \textbf{convex body} is a non-empty, compact, convex subset of $\RR^n$. Let $\mathsf{K}^n$ be the space of convex bodies in $\RR^n$. We can define an associative, commutative binary operation on the set of convex bodies called the Minkowski sum. 

\begin{defn}
	For any convex bodies $K, L \subseteq \RR^n$, we define the \textbf{Minkowski sum} of the two bodies to be the convex body
	\[
		K + L := \{x + y \in \RR^n : x \in K, y \in L\}.
	\]
\end{defn}

For any $x_0, x_1 \in K$ and $y_0, y_1 \in K$ we have that $\lambda x_0 + \tau x_1 \in K$ and $\lambda y_0 + \tau y_1 \in L$ for any $\lambda, \tau \geq 0$ satisfying $\lambda + \tau = 1$. Thus, we have that 
\[
	\lambda (x_0 + y_0) + \tau (x_1 + y_1) = (\lambda x_0 + \tau x_1) + (\lambda y_0 + \tau y_1) \in K + L.
\]
This proves that $K + L$ is a well-defined convex body. In fact, a subset $K \subseteq \RR^n$ is a convex body if and only if $\alpha K + \beta K = (\alpha + \beta) K$ for all $\alpha, \beta \geq 0$. In the next section, we discuss the boundary structure of convex sets.  

\subsection{Support and Facial Structure}

For a convex body $K \subseteq \RR^n$, we define its dimension as $\dim K := \dim \aff K$. The boundary of convex bodies can be characterized in terms of supporting hyperplanes and faces. We call $H$ a \textbf{supporting hyperplane} of $K$ if $K$ lies on one side of the hyperplane and $K \cap H \neq \emptyset$. A \textbf{supporting half-space} is the half-space of a supporting hyperplane which contains $K$. Any convex body will be equal to the intersection of all of its supporting hyperspaces. To explore the boundary structure in greater detail, we define the notion of a face and an exposed face. 
\begin{defn} \label{face-and-exposed-face}
	Let $K \subseteq \RR^n$ be a convex body and let $F \subseteq K$ be a subset. 
	\begin{enumerate}[label = (\alph*)]
		\item If $F$ is a convex subset such that each segment $[x, y] \subseteq K$ with $F \cap \relint [x, y] \neq \emptyset$ is contained in $F$, then we call $F$ a \textbf{face}. If $\dim F = i$, then we call $F$ an $i$-face. 

		\item If there is a supporting hyperplane $H$ such that $K \cap H = F$, we call $F$ an \textbf{exposed face}. The exposed faces of codimension $1$ are called \textbf{facets} and the exposed faces of dimension $0$ are called \textbf{vertices}. 
	\end{enumerate}
	For $i \geq -1$, we define $\mathcal{F}_i(K)$ be the set of $i$-faces of $K$. By convention, we let $\mathcal{F}_{-1}(K) = \{\emptyset\}$ and consider $\emptyset$ to be the unique face of dimension $-1$. The set of all faces $\mathcal{F}(K) := \bigcup_i \mathcal{F}_i(K)$ equipped with set inclusion forms a poset. 
\end{defn}

\begin{remark}
	In general, the notions of faces and exposed faces are not the same. It is not hard to check that an exposed face is a face. On the other hand, a face is not necessarily an exposed face. For example, in Figure~\ref{counter-example} the top semi-circle is a face which is not exposed. In Section~\ref{sec:polytopes}, we study polytopes. Polytopes have the property that their exposed faces and faces are exactly the same.
	\begin{figure}[h] \label{counter-example}
		\begin{center}
			\includegraphics[scale = 1]{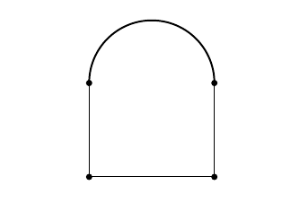}
			\caption{Convex body with a face which is not exposed.}
		\end{center}
	\end{figure}
\end{remark}
For any convex body $K \in \mathsf{K}^n$ and $u \in \RR^n \backslash \{0\}$, we can define the \textbf{support function} $h_K(u)$ of $K$ in the direction $u$ and the \textbf{exposed face} $F_K(u)$ of $K$ in the direction $u$ as 
\begin{align*}
	h_K(u) & := \sup_{x \in K} \langle u, x \rangle \\
	F_K(u) & := K \cap \{x \in \RR^n : \langle u, x \rangle = h_K(u)\}.
\end{align*}
It is not difficult to check that $F_K(u)$ is an exposed face of $K$ for any direction $u$. For any $K, L \subseteq \RR^n$ and $\lambda \geq 0$, the support function and exposed face function satisfy the equation
\[
	h_{\lambda K+L} = \lambda h_K + h_L, \quad \text{and} \quad F_{\lambda K+L} = \lambda F_K + F_L.
\]
Geometrically, for a unit vector $u$, the support value $h_K(u)$ is the (signed) distance of the furthest point on $K$ in the direction $u$. The exposed face $F_K(u)$ consists of the subset of $K$ which achieves this maximum distance in the direction $u$. The support function $h_K$ of a convex body $K$ completely determines the convex body because of the equation 
\[
	K = \bigcap_{u \in \mathbb{S}^{n-1}} H^{-}_{u, h_K(u)} = \bigcap_{u \in \mathbb{S}^{n-1}} \{x \in \RR^n : \langle x, u \rangle \leq h_K(u)\}.
\]
Thus, from the properties of support functions, we have the following cancellation law in Proposition~\ref{prop-cancellation-law}.

\begin{prop} [Cancellation Law for Minkowski Addition] \label{prop-cancellation-law}
	Let $K, L, M \subseteq \RR^n$ be convex bodies such that $K+M = L+M$. Then $K = L$. 
\end{prop}

\begin{proof}
	Since $K+M = L+M$, we have that $h_K + h_M = h_{K+M} = h_{L+M} = h_L + h_M$. So, we have that $h_K = h_L$ and from our remark that the support function determines the convex body we have $K = L$.
\end{proof}

Proposition~\ref{prop-cancellation-law} gives the set of convex bodies the structure of a commutative, associative monoid with a cancellation law.

\subsection{Polytopes and SSI Polytopes} \label{sec:polytopes}

In this section, we define a subclass of convex bodies which are combinatorial in nature and allow us to approximate general convex bodies. We call a convex body $P \subseteq \RR^n$ a \textbf{polytope} if it can be written as the convex hull of a finite number of points. 

\begin{prop}[Properties of Polytopes]
	Let $P \subseteq \RR^n$ be an arbitrary polytope. Then, $P$ satisfies the following properties: 
	\begin{enumerate}[label = (\alph*)]
		\item The exposed faces of $P$ are exactly the faces of $P$. 
		\item $P$ has a finite number of faces. 
		\item Let $F_1, \ldots, F_k$ be the facets of $P$ with normal vectors $u_1, \ldots, u_k$. Then 
		\[
			P = \bigcap_{i = 1}^k H_{u_i, h_P(u_i)}^-
		\]
		In particular, the numbers $h_P(u_1), \ldots, h_P(u_k)$ determine $P$ uniquely. 

		\item The face poset $\mathcal{F}(P)$ of $P$ is a graded lattice where the poset rank function is the dimension of the corresponding face. The face lattice satisfies the \textbf{Jordan-Dedekind chain condition}. In other words, if we have a face $F^j \in \mathcal{F}_j(P)$ and a face $F^k \in \mathcal{F}_k(P)$ satisfying $F^j \subset F^k$, then there are faces $F^i \in \mathcal{F}_i(P)$ for $j+1 \leq i \leq k-1$ such that 
		\[
			F^j \subset F^{j+1} \subset \ldots \subset F^{k-1} \subset F^k.
		\]
	\end{enumerate}
\end{prop}

\begin{proof}
	These properties follow from Corollary 2.4.2, Theorem 2.4.3, Corollary 2.4.4, and Corollary 2.4.8 in \cite{schneider_2013}. 
\end{proof}

We now give a few examples of special polytopes and their combinatorial properties. The calculations performed in these examples will reappear in future chapters. 

\begin{example}[Order Polytope]
	Let $(P, \leq)$ be a finite poset. Let $\RR^P$ be Euclidean space of dimension $|P|$ where the coordinates are indexed by the elements of $P$. Every vector $v \in \RR^P$ can be written in the form $v = \sum_{\omega \in P} v_\omega \cdot e_\omega$ where $\{e_\omega\}_{\omega \in P}$ is a standard orthonormal basis for $\RR^P$. Then, we can define the \textbf{order polytope} of $P$ to be the polytope given by
	\[
		\mcO_P := \{x \in [0, 1]^P : x_{i} \leq x_{j} \text{ whenever } i \leq j \text{ in } P\}.
	\]
	Given any linear extension $\sigma : P \to [n]$, we can define the \textbf{linear extension simplex}
	\[
		\Delta_l := \{x \in [0, 1]^n : 0 \leq x_{l^{-1}(1)} \leq \ldots \leq x_{l^{-1}}(n) \leq 1\}.
	\]
	Let $e(P)$ be the set of linear extensions of $P$. Then, we can triangulate the order polytope with linear extension simplicies as
	\[
		\mcO_P := \bigsqcup_{l \in e(P)} \Delta_l.
	\]
	In the union, the simplices are disjoint except possibly in a set of measure zero. This implies that $\Vol_n(\mcO_P) = e(P) / (n!)$. When $P$ is the poset on $n$ elements with an empty partial order, we recover the familiar fact that the $\Vol_n(\Delta) = 1/n!$ for a simplex $\Delta$ whose vertices lie in $\{0, 1\}^n$ and no two vertices lie in the same affine hyperplane orthogonal to $(1, \ldots, 1)$.  
\end{example}

\begin{example}[Zonotope] \label{zonotope-example}
	For any vectors $v_1, \ldots, v_l \in \RR^n$, we can define the \textbf{zonotope} generated by these vectors by 
	\[
		Z(v_1, \ldots, v_l) := [0, v_1] + \ldots + [0, v_l] = \left \{ \sum_{i = 1}^l \lambda_i v_i : \lambda_i \in [0, 1] \right \}.
	\]
\end{example}
We call a polytope $P \subseteq \RR^n$ \textbf{simple} if $\text{int} (P) \neq \emptyset$ and each of its vertices is contained in exactly $n$ facets. We say two polytopes $P_1, P_2$ are called \textbf{strongly isomorphic} if $\dim F_{P_1}(u) = \dim F_{P_2}(u)$ for all $u \in \RR^n \backslash \{0\}$. Strong isomorphism is an equivalence relation between polytopes. Two strongly isomorphic polytopes have isomorphic face lattices with corresponding faces being parallel to each other. To illustrate the strength of this equivalence relation, it can be shown that given two strongly isomorphic polytopes, the corresponding faces are also strongly isomorphic. This follows from the fact that the faces of a polytope come from the intersections of facets. 

\begin{lem} \label{family-of-strongly-isomorphic}
	If $P_1, P_2$ are strongly isomorphic polytopes, then for each $u \in \RR^n \backslash \{0\}$, the faces $F_{P_1}(u)$ and $F_{P_2}(u)$ are strongly isomorphic. 
\end{lem}
\begin{proof}
	See Lemma 2.4.10 in \cite{schneider_2013}. 
\end{proof}

Given two polytopes, it is not difficult to construct an infinite family of polytopes which are in the same strong isomorphism class. Indeed, any pair of positively-weighted Minkowski sums of two polytopes are strongly isomorphic. 

\begin{prop} \label{families-of-strongly-isomorphic}
	If $P_1, P_2 \subseteq \RR^n$ are polytopes which are strongly isomorphic, then $\lambda_1 P_1 + \lambda_2 P_2$ with $\lambda_1, \lambda_2 > 0$ are strongly isomorphic. If $P_1, P_2$ are strongly isomorphic, then the family of polytopes $\lambda_1 P_1 + \lambda_2 P_2$ with $\lambda_1, \lambda_2 \geq 0$ and $\lambda_1 + \lambda_2 > 0$ is strongly isomorphic.
\end{prop}

\begin{proof}
	See Corollary 2.4.12 in \cite{schneider_2013}.
\end{proof}

Any polytope is determined by its facets. The facets of a polytope are determined by their normal vectors and the polytope's support values in the direction of these normals. Let $\alpha$ be a strong isomorphism class of polytopes. All of the polytopes in this isomorphism class will share the same facet normals $\mathcal{U}$. Hence, all of the polytopes in $\alpha$ are \textit{uniquely} determined by their support values at each of the facet normals. In other words, there is injective map $h : \alpha \to \RR^\mathcal{U}$ from the strong isomorphism class to the finite-dimensional vector space $\RR^{\mathcal{U}}$ defined by 
\[
	h_P := h(P) = \left ( H^-_{u, h_P(u)}\right )_{u \in \mathcal{U}}.
\]
For a polytope $P \in \alpha$, we call $h_P := h(P)$ the \textbf{support vector} of $P$. Clearly, the support vector map is not surjective. For example, if we pick a vector in $\RR^\mathcal{U}$ in which all of the coordinates of $x \in \RR^{\mathcal{U}}$ sufficiently negative, there is no corresponding polytope with those support values. In Lemma~\ref{perturbation-SSI} we prove that simple and strongly isomorphic (SSI) polytopes are robust under small perturbations. That is, after perturbing a polytope in $\alpha$ by a sufficiently small amount, it will remain in $\alpha$. As a corollary, we prove that any vector in $\RR^{\mathcal{U}}$ can be written as a scalar multiple of the difference of two support vectors. 

\begin{lem} \label{perturbation-SSI}
	Let $P$ be a simple $n$-polytope with facet normals $\mathcal{U}$. Then, there is a number $\beta > 0$ such that every polytope of the form 
	\[
		P' := \bigcap_{u \in \mathcal{U}} H_{u, h_P(u) + \alpha_u}^-
	\]
	with $|\alpha_u| \leq \beta$ is simple and strongly isomorphic to $P$. 
\end{lem}

\begin{proof}
	See Lemma 2.4.14 in \cite{schneider_2013}. 
\end{proof}

\begin{cor} [Lemma 5.1 in \cite{bochner}]\label{difference-of-support-vectors}
	Let $\alpha$ be the strong isomorphism class of a simple polytope $P$ with facet normals $\mathcal{U}$. For any $x \in \RR^{\mathcal{U}}$ there are $a > 0$ and $Q \in \alpha$ such that $x = a (h_Q - h_P)$. 
\end{cor}

\begin{proof}
	From Lemma~\ref{perturbation-SSI} there exists $a^{-1} > 0$ sufficiently small and $Q \in \alpha$ such that $h_Q = a^{-1}(x + h_P)$. By rearranging the equation, we get $x = a(h_Q - h_P)$. 
\end{proof}

Given a collection of convex bodies, we will be interested in approximating all of these polytopes simultaneously by SSI polytopes. In order to have a well-defined notion of approximation by polytopes, we must equip the convex bodies with a metric structure. In Section~\ref{sec:hausdorff-metric}, we will define a metric on the space of convex bodies called the Hausdorff metric. 

\subsection{Hausdorff Metric on Convex Bodies} \label{sec:hausdorff-metric}

We define a metric $\delta$ called the \textbf{Hausdorff metric} on $\mathsf{K}^n$ such that for any pair of elements $K, L \in \mathsf{K}^n$, we have 
\begin{align*}
	\delta (K, L) & = \max \left \{ \sup_{x \in K} \inf_{y \in L} |x-y|, \sup_{y \in L} \inf_{x \in K} |x-y|\right \} \\
	& = \inf \{\varepsilon \geq 0 : K \subseteq L + \varepsilon B^n, L \subseteq K + \varepsilon B^n\} \\
	& = \norm{h_K - h_L}_\infty.
\end{align*}
For a proof of the equivalence of these three descriptions, we refer the reader to the proof of Theorem 3.2 in \cite{Hug2020-ue}. In Proposition~\ref{convex-body-is-metric-space}, we prove that $\delta$ is a metric on the space of convex bodies. Theorem~\ref{approximation-SSI} implies that any set of convex bodies can be approximated by SSI polytopes.

\begin{prop} \label{convex-body-is-metric-space}
	The ordered pair $(\mathsf{K}^n, \delta)$ is a metric space. 
\end{prop}

\begin{proof}
	For $K, L, M \in \mathsf{K}^n$, we have 
	\[
		\delta(K, M) = \norm{h_K - h_M}_\infty \leq \norm{h_K - h_L}_\infty + \norm{h_L - h_M}_\infty = \delta(K, L) + \delta(L, M). 
	\]
	It is clear that $\delta (K, L) = \delta (L, K)$. Finally, we have $\delta(K, L) = 0$ if and only if $\norm{h_K - h_L}_\infty = 0$. Since $h_K$ and $h_L$ are continuous functions, we must have $h_K = h_L$. Then $K = L$ since convex bodies are determined by their support functions. This suffices for the proof. 
\end{proof}

\begin{thm} \label{approximation-SSI}
	Let $K_1, \ldots, K_m \subset \RR^n$ be convex bodies. To every $\varepsilon > 0$, there are simple strongly isomorphic polytopes $P_1, \ldots, P_m$ such that $\delta (K_i, P_i) < \varepsilon$ for $i = 1, \ldots, m$.
\end{thm}

\begin{proof}
	See Theorem 2.4.15 in \cite{schneider_2013}. 
\end{proof}

We define a few continuous functions with respect to the Hausdorff distance on $\mathsf{K}^n$. This will allow us to compute the function values of general convex bodies via approximation by SSI polytopes. Since convex bodies are compact, they are Lebesgue measurable. This implies that there is a well-defined function $\Vol_n : \mathsf{K}^n \to \RR_{\geq 0}$ given by
\[
	\Vol_n (K) := \int_{x \in \RR^n} \1_K(x) \, d \lambda(x)
\]
where $\lambda$ is the Lebesgue measure on $\RR^n$. From Theorem 1.8.20 in \cite{schneider_2013}, the volume functional $\Vol_n(\cdot)$ is a continuous function on $(\mathsf{K}^n, \delta)$. For another example of a continuous map, consider the projection map $p : \mathsf{K}^n \times \RR^n \to \RR^n$ which maps $(K, x) \mapsto p(K, x)$ where $p(K, x)$ is the projection of $x$ onto $K$. For the proof that this map is continuous, see Section 1.8 in \cite{schneider_2013}. Finally, the map induced by the Minkowski sum $\mathsf{K}^n \times \mathsf{K}^n \to \mathsf{K}^n$ is continuous.

\subsection{Mixed Volumes}

Recall that in the setting of mixed discriminants, for $n \times n$ matrices $A_1, \ldots, A_m$ we had the identity
\begin{equation} \label{ye-old}
	\det (\lambda_1 A_1 + \ldots + \lambda_m A_m) = \sum_{i_1, \ldots, i_n = 1}^m \lambda_{i_1} \ldots \lambda_{i_n} \cdot \mathsf{D}(A_{i_1}, \ldots, A_{i_n})
\end{equation}
In this section, we study the convex body analog of Equation~\ref{ye-old}. For fixed convex bodies $K_1, \ldots, K_m$, we consider the function

\begin{equation} \label{eqn:mixed-volume-expansion}
	(\lambda_1, \ldots, \lambda_m) \longmapsto \Vol_n \left (\lambda_1 K_1 + \ldots + \lambda_m K_m \right )
\end{equation}

Similar to the situation in mixed discriminants, the function described in Equation~\ref{eqn:mixed-volume-expansion} ends up being a homogeneous polynomial of degree $n$. The coefficients of this polynomial are the convex body analogs of mixed discriminants called mixed volumes. 

\begin{example} \label{example:small-case}
	In this example, we compute Equation~\ref{eqn:mixed-volume-expansion} in the dimension $n = 2$. Let $K \subseteq \RR^2$ be a polygon and $L \subseteq \RR^2$ be the unit disc. Then, by simple geometric reasoning we have 
	\[
		\Vol_2 (\lambda K + \mu L) = \lambda^2 \Vol_2(K) + \lambda \mu \cdot \text{perimeter}(K) + \mu^2 \Vol_2(L). 
	\]
	Note that the coefficients of the polynomial encode geometric information about $K$ and $L$. These coefficients will be the mixed volumes of the convex bodies in the mixed Minkowski sum. 
\end{example}

We first define mixed volumes for polytopes in Definition~\ref{def:mixed-volume-polytope}. Then we prove that the expression for mixed volumes for polytopes admits a continuous extension to all convex bodies. This continuous extension will be the definition for mixed volumes for arbitrary convex bodies.

\begin{defn} \label{def:mixed-volume-polytope}
	For $n \geq 2$, let $P_1, \ldots, P_n \subseteq \RR^n$ be polytopes and let $\mathcal{U}$ be the set of unit facet normals of $P_1 + \ldots + P_{n-1}$. We define their mixed volume $\mathsf{V}_n(P_1, \ldots, P_n)$ inductively by 
	\[
		\mathsf{V}_n(P_1, \ldots, P_n) = \frac{1}{n} \sum_{u \in \mathcal{U}} h_{P_n}(u) \cdot \mathsf{V}_{n-1} (F_{P_1}(u), \ldots, F_{P_{n-1}}(u)).
	\]
	Since $F_{P_1}(u), \ldots, F_{P_{n-1}}(u)$ are contained in parallel hyperplanes in $\RR^n$, the $(n-1)$-dimensional mixed volume of these convex bodies is well-defined. In the case $n = 1$, we define $\mathsf{V}_1 ([a, b]) = b-a$. 
\end{defn}

\begin{thm}[Theorem 3.7 in \cite{Hug2020-ue}] \label{mixed-volume-polynomial-expansion}
	Let $P_1, \ldots, P_m \subseteq \RR^n$ be polytopes. Then, we have 
	\[
		\Vol_n(\lambda_1 P_1 + \ldots + \lambda_m P_m) = \sum_{i_1, \ldots, i_n = 1}^m \lambda_{i_1} \ldots \lambda_{i_n} \cdot \mathsf{V}_n (P_{i_1}, \ldots, P_{i_n}).
	\]
\end{thm}

\begin{proof}
	It suffices to prove the equality when $\lambda_1, \ldots, \lambda_m > 0$. The general expansion will follow from the continuity of polynomials, the volume functional, and Minkowski sums. Since the $\lambda_i$ are all strictly positive, Proposition~\ref{families-of-strongly-isomorphic} implies that $\lambda_1 P_1 + \ldots + \lambda_m P_m$ and $P_1 + \ldots + P_m$ are strongly isomorphic. In particular, the set of facet normals $\mathcal{U}$ are the same for both of these polytopes. We can compute 
	\begin{align*}
		\Vol_n (\lambda_1 P_1 + \ldots + \lambda_m P_m) & = \frac{1}{n} \sum_{u \in \mathcal{U}} h_{\lambda_1 P_1 + \ldots + \lambda_m P_m}(u) \cdot \Vol_{n-1} \left ( F_{\lambda_1 P_1 + \ldots + \lambda_m P_m} (u) \right ) \\
		& = \frac{1}{n} \sum_{u \in \mathcal{U}} \sum_{i = 1}^n \lambda_i h_{P_i}(u) \cdot \Vol_{n-1} \left ( \sum_{i = 1}^m \lambda_i F_{P_i}(u) \right ) \\
		& = \frac{1}{n} \sum_{u \in \mathcal{U}} \sum_{i_1, \ldots, i_n = 1}^m \lambda_{i_1} \ldots \lambda_{i_n} \cdot h_{P_{i_n}}(u) \cdot \mathsf{V}_{n-1} \left (F_{P_{i_1}}(u), \ldots, F_{P_{i_{n-1}}}(u) \right ) \\
		& = \sum_{i_1, \ldots, i_n = 1}^m \lambda_{i_1} \ldots \lambda_{i_n} \sum_{u \in \mathcal{U}} \frac{1}{n} h_{P_{i_n}}(u) \cdot \mathsf{V}_{n-1} \left (F_{P_{i_1}}(u), \ldots, F_{P_{i_{n-1}}}(u) \right ) \\
		& = \sum_{i_1, \ldots, i_n = 1}^m \lambda_{i_1} \ldots \lambda_{i_n} \cdot \mathsf{V}_n \left (F_{P_{i_1}}(u), \ldots, F_{P_{i_{n}}}(u) \right ). 
	\end{align*}
	This suffices for the proof. 
\end{proof}

From Definition~\ref{def:mixed-volume-polytope}, it is not clear that the mixed volume is symmetric. The symmetry for mixed volumes will follow from the inversion formula given in Theorem~\ref{inversion-formula}. This result will not only prove that the mixed volume is symmetric, but it gives a continuous extension of the mixed volume to all convex bodies.

\begin{thm}[Inversion Formula, Lemma 5.1.4 in \cite{schneider_2013}] \label{inversion-formula}
	For polytopes $P_1, \ldots, P_n \subseteq \RR^n$, we have
	\begin{align*}
		\mathsf{V}_n(P_1, \ldots, P_n) & = \frac{1}{n!} \sum_{k = 1}^n (-1)^{n+k} \sum_{1 \leq r_1 < \ldots < r_k \leq n} \Vol_n (P_{r_1} + \ldots + P_{r_k}) \\
		& = \frac{1}{n!} \sum_{I \subseteq [n]} (-1)^{n - |I|} \Vol_n \left ( \sum_{i \in I} P_i \right ).
	\end{align*}
\end{thm}

\begin{proof}
	We present the proof in \cite{schneider_2013} for the sake of completeness. We can define the function $f : \RR^n \to \RR$ given by 
	\[
		f(\lambda_1, \ldots, \lambda_n) := \frac{1}{n!} \sum_{k = 1}^n (-1)^{n+k} \sum_{1 \leq r_1 < \ldots < r_k \leq n} \Vol_n(\lambda_{r_1} P_{r_1} + \ldots+ \lambda_{r_k} P_{r_k}).
	\]
	From Theorem~\ref{mixed-volume-polynomial-expansion}, we get that $f$ is either $0$ or a homogeneous polynomial of degree $n$. If we let $\lambda_1 = 0$, then we have
	\begin{align*}
		n! (-1)^n f(0, \lambda_2, \ldots, \lambda_n) & := S_1 + \sum_{k = 2}^n S_k
	\end{align*}
	where we define $S_1 = - \sum_{2 \leq r_1 \leq n} \Vol_n (\lambda_{r_1} P_{r_1})$ and for $k \geq 2$ we define
	\begin{align*}
		S_k & = \sum_{k = 1}^n (-1)^k \left ( \sum_{2 \leq r_2 < \ldots < r_k \leq n} \Vol_n(\lambda_{r_2} P_{r_2} + \ldots + \lambda_{r_k} P_{r_k}) + \sum_{2 \leq r_1 < \ldots < r_k \leq n} \Vol_n (\lambda_{r_1} P_{r_1} + \ldots + \lambda_{r_k} P_{r_k})\right ).
	\end{align*}
	This sum telescopes and we are left with $f(0, \lambda_2, \ldots, \lambda_n) = 0$. By symmetry, we know that $f$ is a scalar multiple of $\lambda_1 \ldots \lambda_n$. The only term in the sum defining $f$ which contributes the monomial $\lambda_1 \ldots \lambda_n$ is the term $\Vol_n (\sum_{i = 1}^n \lambda_i P_i)$. Thus, we have that 
	\[
		f(\lambda_1, \ldots, \lambda_n) = \lambda_1 \ldots \lambda_n \cdot \mathsf{V}_n (P_1, \ldots, P_n).
	\]
	By substituting $\lambda_1 = \ldots = \lambda_n = 1$, this completes the proof. 
\end{proof}

As an application of the inversion formula, we will compute the mixed volume of a collection of line segments. This calculation will be instrumental for the calculation of the volume of a zonotope. 

\begin{example}\label{mixed-volume-of-line-segments}
	Let $v_1, \ldots, v_n \in \RR^n$ be vectors. From Theorem~\ref{inversion-formula}, we have that 
	\begin{align*}
		\mathsf{V}_n([0, v_1], \ldots, [0, v_n]) = \frac{1}{n!} \Vol_n ([0, v_1] + \ldots + [0, v_n])
	\end{align*}
	where the other summands are zero since they are contained in lower dimensional affine spaces. Let $T : \RR^n \to \RR^n$ be the linear map which sends $Te_i = v_i$ for $1 \leq i \leq n$. Then, $[0, v_1] + \ldots + [0, v_n] = T ([0, 1]^n)$. This gives us the identity 
	\[
		\mathsf{V}_n([0, v_1], \ldots, [0, v_n]) = \frac{1}{n!} \Vol_n(T([0, 1]^n)) = \frac{1}{n!} |\text{Det}(v_1, \ldots, v_n)|.
	\]
\end{example}

Since the volume functional and Minkowski sum is continuous on the space of convex bodies, the function on the right hand side of Theorem~\ref{inversion-formula} is also a continuous function on the space of convex bodies. This allows us to extend the definition of mixed volumes to all convex bodies. 

\begin{defn} \label{def:mixed-volume-general}
	Let $K_1, \ldots, K_n \subseteq \RR^n$ be convex bodies. Then, we define the \textbf{mixed volume} of $K_1, \ldots, K_n$ as  
	\[
		\mathsf{V}_{n}(K_1, \ldots, K_n) := \frac{1}{n!} \sum_{I \subseteq [n]} (-1)^{n- |I|} \Vol_n \left ( \sum_{i \in I} P_i \right ).
	\]
\end{defn}

The function defined in Definition~\ref{inversion-formula} is continuous. Let $K_1, \ldots, K_n \subseteq \RR^n$ be a collection of convex bodies. Given a sequence $P^{(i)}_k$ of polytopes for all $1 \leq i \leq n$ satisfying  $P_i^{(k)} \longrightarrow K_i$ as $k \to \infty$ for all $1 \leq i \leq n$, we have 
\[
	\mathsf{V}_n(K_1, \ldots, K_n) = \lim_{k \to \infty} \mathsf{V}_n (P_1^{(k)}, \ldots, P_n^{(k)})
\] 
from the continuity of the mixed volume. This allows us to extend Theorem~\ref{mixed-volume-polynomial-expansion} to general convex bodies. 

\begin{thm} \label{mixed-volume-polynomial-expansion-FINAL}
	Let $K_1, \ldots, K_m \subseteq \RR^n$ be convex bodies. Then, we have 
	\[
		\Vol_n (\lambda_1 K_1 + \ldots + \lambda_m K_m) = \sum_{i_1 , \ldots , i_n = 1}^m \lambda_{i_1} \ldots \lambda_{i_n} \cdot \mathsf{V}_n (K_{i_1}, \ldots, K_{i_n}).
	\]
\end{thm}

\begin{proof}
	For $1 \leq i \leq n$, let $P_i^{(k)}$ for $k \geq 1$ be a sequence of polytopes converging to $K_i$ in Hausdorff distance. Then, we have 
	\begin{align*}
		\Vol_n \left ( \sum_{i = 1}^n \lambda_i K_i \right ) & = \lim_{k \to \infty} \Vol_n \left ( \sum_{i = 1}^n \lambda_i P_i^{(k)} \right ) \\
		& = \lim_{k \to \infty} \sum_{i_1 , \ldots , i_n = 1}^m \lambda_{i_1} \ldots \lambda_{i_n} \cdot \mathsf{V}_n (P_{i_1}^{(k)}, \ldots, P_{i_n}^{(k)}) \\
		& = \sum_{i_1 , \ldots , i_n = 1}^m \lambda_{i_1} \ldots \lambda_{i_n} \cdot \lim_{k \to \infty} \mathsf{V}_n (P_{i_1}^{(k)}, \ldots, P_{i_n}^{(k)}) \\
		& = \sum_{i_1 , \ldots , i_n = 1}^m \lambda_{i_1} \ldots \lambda_{i_n} \cdot \mathsf{V}_n (K_{i_1}, \ldots, K_{i_n}).
	\end{align*}
	This suffices for the proof. 
\end{proof}

\begin{example}[Volume of a Zonotope] \label{example-volume-of-a-zonotope}
	From Theorem~\ref{mixed-volume-polynomial-expansion-FINAL}, we can compute the volume of a zonotope. Let $v_1, \ldots, v_l \in \RR^n$ be vectors. Then, we have that 
	\begin{align*}
		\Vol_n (Z(v_1, \ldots, v_l)) & = \Vol_n \left ( \sum_{i = 1}^l [0, v_i] \right ) \\
		& = \sum_{i_1, \ldots, i_n = 1}^l \mathsf{V}_n ([0, v_{i_1}], \ldots, [0, v_{i_n}]) \\
		& = \sum_{i_1, \ldots, i_n = 1}^l \frac{1}{n!} |\text{Det}(v_{i_1}, \ldots, v_{i_n}) | \\
		& = \sum_{1 \leq i_1 < \ldots < i_n \leq l} |\text{Det}(v_{i_1}, \ldots, v_{i_n})|
	\end{align*}
	where the equality in the third line follows from the computation in Example~\ref{mixed-volume-of-line-segments}. 
\end{example}

We now define the notion of mixed area measures, a measure-theoretic interpretation of mixed volumes. This notion will become essential when considering the equality cases of the Alexandrov-Fenchel inequality. Our treatment of mixed area measures is inspired by Chapter 4 in \cite{Hug2020-ue}. Recall that for polytopes $P_1, \ldots, P_n$, we define the mixed volume as 
\[
	\mathsf{V}_n (P_1, \ldots, P_n) = \frac{1}{n} \sum_{u \in \mathbb{S}^{n-1}} h_{P_n}(u) \cdot \mathsf{V}_{n-1}(F_{P_1}(u), \ldots, F_{P_{n-1}}(u)).
\]
The sum is well-defined because the number of facet normals for polytopes is finite. Outside of these facet normals, the mixed volume inside the sum vanishes. For any convex body $K \subseteq \RR^n$ which is not necessarily a polytope, we can take a sequence of polytopes convering to $K$ to get the identity
\[
	\mathsf{V}_n(P_1, \ldots, P_{n-1}, K) = \frac{1}{n} \sum_{u \in \mathbb{S}^{n-1}} h_{K}(u) \cdot \mathsf{V}_{n-1}(F_{P_1}(u), \ldots, F_{P_{n-1}}(u)).
\]
From a measure theory perspective, we have a discrete measure given by
\begin{equation} \label{equation:mixed-area-measure}
	S_{P_1, \ldots, P_{n-1}} := \sum_{u \in \mathbb{S}^{n-1}} \mathsf{V}_{n-1}(F_{P_1}(u), \ldots, F_{P_{n-1}}(u)) \cdot \delta_u
\end{equation}
which satisfies the equation 
\[
	\mathsf{V}_n(P_1, \ldots, P_{n-1}, K) = \frac{1}{n} \int_{\mathbb{S}^{n-1}} h_{K}(u) \, S_{P_1, \ldots, P_{n-1}}(du).
\]
In Equation~\ref{equation:mixed-area-measure}, the measure $\delta_u$ is the Dirac measure at $u$. We have defined the measure $S_{P_1, \ldots, P_{n-1}}$ in the case where the first $n-1$ convex bodies in the mixed volume are polytopes. Using the Riesz representation theorem, it can be shown that such a measure exists for general convex bodies (see Theorem 4.1 in \cite{Hug2020-ue} and Theorem 2.14 in \cite{rudin}). The existence of this measure will follow from Theorem~\ref{thm-exists-mixed-area-measure}. 

\begin{thm} \label{thm-exists-mixed-area-measure}
	For convex bodies $K_1, \ldots, K_{n-1} \subseteq \RR^n$, there exists a uniquely determined finite Borel measure $S_{K_1, \ldots, K_{n-1}}$ on $\mathbb{S}^{n-1}$ such that 
	\[
		\mathsf{V}(K_1, \ldots, K_{n-1}, K) = \frac{1}{n} \int_{\mathbb{S}^{n-1}} h_K(u) \, S_{K_1, \ldots, K_{n-1}} (du)
	\]
	for all convex bodies $K \subseteq \RR^n$. 
\end{thm}

\begin{proof}
	For a proof of this result, see Theorem 4.1 in \cite{Hug2020-ue}. 
\end{proof}

\begin{defn} \label{def:mixed-area-measure}
	For convex bodies $K_1, \ldots, K_{n-1} \subseteq \RR^n$, we call the measure $S_{K_1, \ldots, K_{n-1}}$ the \textbf{mixed area measure} associated with $(K_1, \ldots, K_{n-1})$. 
\end{defn}

The mixed area measure is a generalization of the spherical Lebesgue measure on $\mathbb{S}^{n-1}$. For example, when our convex bodies satisfy $K_1 = \ldots = K_{n-1} = \mathbb{B}^n$, we have that $S_{\mathbb{B}^n, \ldots, \mathbb{B}^n}$ is exactly the spherical Lebesgue measure on $\mathbb{S}^{n-1}$. Thus, for any convex body $K \subseteq \RR^n$ have 
\[
	\mathsf{V}_n (K, \mathbb{B}^n, \ldots, \mathbb{B}^n ) = \frac{1}{n} \int_{\mathbb{S}^{n-1}} h_K(u) \, S_{\mathbb{B}^n, \ldots, \mathbb{B}^n} (du) = \frac{1}{n} \int_{\mathbb{S}^{n-1}} h_K(u) \, \sigma (du). 
\]
This implies that the mixed volume $\mathsf{V}_n (K, \mathbb{B}^n, \ldots, \mathbb{B}^n)$ is exactly the surface area of $K$. This calculation explains the appearance of the perimeter in Example~\ref{example:small-case}. In the next section, we discuss the positivity of mixed volumes and the support of mixed area measures. 

\subsection{Positivity and Normal Directions}

We begin this section with necessary and sufficient conditions for a mixed volume to be strictly positive. The conditions are collected in Lemma~\ref{positivity-of-mixed-volumes} for reference.

\begin{lem} \label{positivity-of-mixed-volumes}
	For conex bodies $C_1, \ldots, C_n \subseteq \RR^n$, the following two conditions are equivalent.
	\begin{enumerate}[label = (\alph*)]
		\item $\mathsf{V}_n (C_1, \ldots, C_n) > 0$. 
		\item There are segments $I_i \subseteq C_i$, $i \in [n]$ with linearly independent directions. 
		\item $\dim (C_{i_1} + \ldots + C_{i_k}) \geq k$ for all $k \in [n]$, $1 \leq i_1 < \ldots < i_k \leq n$. 
	\end{enumerate}
\end{lem}
\begin{proof}
	See Lemma 2.2 in \cite{shenfeld2022extremals}. 
\end{proof}
Next, we provide necessary and sufficient conditions for a vector to be in the support of the mixed area measure $S_{B, P_1, \ldots, P_{n-2}}$ where $P_1, \ldots, P_{n-2}$ are polytopes. The terminology used in Definition~\ref{extreme-normal-directions} was introduced in Lemma 2.3 of \cite{shenfeld2022extremals}. 

\begin{defn}[Lemma 2.3 in \cite{shenfeld2022extremals}] \label{extreme-normal-directions}
	For $P_1, \ldots, P_{n-2} \subseteq \RR^n$ convex polytopes and $u \in \mathbb{S}^{n-1}$. We call the vector $u$ a $(B, P_1, \ldots, P_{n-2})$-extreme \textbf{normal direction} if and only if at least one of the following three equivalent conditions hold:
	\begin{enumerate}[label = (\alph*)]
		\item $u \in \supp S_{B, P_1, \ldots, P_{n-2}}$. 
		\item There are segments $I_i \subseteq F (P_i, u)$, $i \in [n-2]$ with linearly independent directions. 
		\item $\dim(F(P_{i_1}, u) + \ldots + F(P_{i_k}, u)) \geq k$ for all $k \geq [n-2]$, $1 \leq i_1 < \ldots < i_k \leq n-2$. 
	\end{enumerate}
\end{defn}

The fact that (a)-(c) are equivalent in Definition~\ref{extreme-normal-directions} is exactly the content of Lemma 2.3 in \cite{shenfeld2022extremals}. Both Lemma~\ref{positivity-of-mixed-volumes} and Definition~\ref{extreme-normal-directions} are important concepts related to the extremals of the Alexandrov-Fenchel inequality. They will reappear in Section~\ref{kahn-saks}.

\chapter{Mechanisms for Log-concavity}
In this chapter, we study a handful of tools which have been used to prove log-concavity conjectures in combinatorics. We begin the chapter with an introduction on log-concavity, ultra-log-concavity, and some of the log-concavity phenomenon present in combinatorics. Then, we prove Cauchy's interlacing theorem. This theorem has been used frequently for example in the theory of Lorentzian polynomials in \cite{lorentzian-polynomials} and Hodge Riemann relations in \cite{MN-gorenstein}. In the next two sections, we introduce Alexandrov's inequality for mixed discriminants as well as the Alexandrov-Fenchel inequality for mixed volumes. We will also consider the equality cases of these inequalities. We end the chapter with an overview of the theory of Lorentzian polynomials. In this chapter, we were only able to cover an $\varepsilon$ percentage of the tools used to prove log-concavity conjectures. For a wider survey of the techniques available, we refer the reader to \cite{Stanley1989LogConcaveAU}. We also chose not to discuss the recent technology of the combinatorial atlas \cite{combinatorial-atlas,logconcave-poset-inequalities} due to space and time constraints. 

\section{Log-concavity and Ultra-log-concavity}

In this thesis we are interested in the log-concavity and ultra-log-concavity of sequences which come from combinatorial structures. Let $a_1, \ldots, a_n$ be a sequence of non-negative numbers. We say that the sequence is log-concave if we have $a_i^2 \geq a_{i-1} a_{i+1}$ for all $1 \leq i \leq n$ where $a_{-1} = a_{n+1} = 0$. We say that the sequence is ultra-log-concave if the sequence $a_i / \binom{n}{i}$ is log-concave. Perhaps the most prototypical example of an ultra-log-concave sequence which arises from combinatorics is the sequence of binomial coefficients $\binom{n}{k}$. In the literature, there are many instances in which a conjecture was made about a sequence being unimodal, and then the conjecture was solved by proving that it is log-concave. This follows because a log-concave sequence with no internal zeroes will automatically be unimodal. This is example of a common phenomenon in mathematics where it is easier to prove a result that is stronger than what was asked. We now give some examples of log-concave sequences in combinatorics.

\begin{example}[Read's Conjecture and Heron-Rota-Welsh Conjecture]
	Let $G$ be a finite graph and let $\chi_G(x)$ be the chromatic polynomial of $G$. In his 1968 paper \cite{Read-conjecture}, Ronald Read conjectured that the absolute values of the coefficients of the chromatic polynomial of a graph $G$ are unimodal. The conjecture was then strengthed by Heron, Rota, and Welsh to the log-concavity of the coefficients of the characteristic polynomial of an arbitrary matroid. Read's conjecture was first proved by June Huh in his paper \cite{milnor-numbers} where he proved the Heron-Rota-Welsh conjecture for representable matroids. The Heron-Rota-Welsh conjecture was then proved in full generality by Karim Adiprasito, June Huh, and Eric Katz in their paper \cite{AHK} by proving that the Chow ring satisfies the hard Lefschetz theorem and Hodge-Riemann relations. 
\end{example}

\begin{example}[The Mason Conjectures]
	Let $M$ be a matroid of rank $r$. For $0 \leq i \leq r$, let $I_i$ denote the number of independent sets of $M$ with $i$ elements. Then there are three conjectures of increasing strength related to the log-concavity of this sequence. 
	\begin{enumerate}
		\item (Mason Conjecture) $I_k^2 \geq I_{k-1} I_{k+1}$. 
		\item (Strong Mason Conjecture) $I_k^2 \geq \left (1 + \frac{1}{k} \right ) I_{k-1} I_{k+1}$. 
		\item (Ultra-Strong Mason Conjecture) $I_k^2 \geq \left (1 + \frac{1}{k} \right ) \left (1 + \frac{1}{n-k} \right ) I_{k-1} I_{k+1}$. 
	\end{enumerate}
	The Mason conjecture says that the sequence $I_i$ is log-concave while the ultra-strong Mason conjecture states that the sequence $I_i$ is ultra-log-concave. All three of these conjectures have been proven. In \cite{AHK}, Karim Adiprasito, June Huh, and Eric Katz proved the Mason conjecture. Later, the strong Mason conjecture was proven by June Huh, Benjamin Schr\"oter, and Botong Wang in \cite{correlation-bounds} and Nima Anari, Kuikui Liu, Shayan Oveis Gharan, and Cynthia Vinzant in \cite{anari2018logconcave} independently.  
\end{example}

\section{Cauchy's Interlacing Theorem} \label{sec:cauchy-interlacing-theorem}

Given an $n \times n$ matrix $A$, we call a submatrix $B$ a \textbf{principal submatrix} if it is obtained from $A$ by deleting some rows and the corresponding columns. When $A$ is a Hermitian matrix and $B$ is a principal submatrix of size $(n-1) \times (n-1)$, Theorem~\ref{cauchy-interlacing} gives a mechanism for controlling the eigenvalues of $B$ based on $A$. We present a short proof from \cite{fisk} of this result. For different proofs using the intermediate value theorem, Sylvester's law of inertia, and the Courant-Fischer minimax theorem, we refer the reader to \cite{suk}, \cite{symmetric-eigenvalue-problem}, and \cite{GoluVanl96}, respectively. 

\begin{thm}[Cauchy Interlace Theorem] \label{cauchy-interlacing}
	Let $A$ be a Hermitian matrix of order $n$, and let $B$ be a principal submatrix of $A$ of order $n-1$. If $\lambda_n \leq \lambda_{n-1} \leq \ldots \leq \lambda_2 \leq \lambda_1$ are the eigenvalues of $A$ and $\mu_n \leq \mu_{n-1} \leq \ldots \leq \mu_3 \leq \mu_2$ are the eigenvalues of $B$. Then, we have
	\[
		\lambda_n \leq \mu_n \leq \lambda_{n-1} \leq \mu_{n-1} \leq \ldots \leq \lambda_2 \leq \mu_2 \leq \lambda_1.
	\]
\end{thm}

Before proving this result, we need Theorem~\ref{interlacing-roots-of-polynomial}. This is a theorem about when roots of polynomials interlace. Suppose that $f, g \in \RR[x]$ are real polynomials with only real roots. We say that the polynomials $f$ and $g$ \textbf{interlace} if their roots $r_1 \leq \ldots \leq r_n$ and $s_1 \leq \ldots \leq s_{n-1}$ satisfy
\[
	r_1 \leq s_1 \leq \ldots \leq r_{n-1} \leq s_{n-1} \leq r_n.
\]

\begin{thm} \label{interlacing-roots-of-polynomial}
	Let $f, g \in \RR[x]$ be polynomials with only only real roots. Suppose that $\deg (f) = n$ and $\deg (g) = n-1$. Then, $f$ and $g$ interlace if and only if the linear combinations $f + \alpha g$ have all real roots for all $\alpha \in \RR$. 
\end{thm} 
\begin{proof}
	See Theorem 6.3.8 in \cite{rahman}.
\end{proof}

\begin{proof}[Proof of Theorem~\ref{cauchy-interlacing}]
	We follow the proof in \cite{fisk}. Without loss of generality, we can decompose 
	\[
		A = \begin{bmatrix}
			B & v \\
			v^T & c
		\end{bmatrix}
	\]
	where $v \in \RR^{(n-1) \times 1}$ and $c \in \RR$. Consider the polynomials $f(x) := \det (A - xI)$ and $g(x) := \det (B - xI)$. Note that the roots of $f$ and $g$ are all real and are exactly the eigenvalues of $A$ and $B$, respectively. For any $\alpha \in \RR$, we have
	\begin{align*}
		f(x) + \alpha g(x) & = \det\begin{bmatrix}
			B-xI & v \\
			v^T & d-x
		\end{bmatrix} + \alpha \det \begin{bmatrix}
			B-xI & v \\
			0 & 1
		\end{bmatrix} \\
		& = \det\begin{bmatrix}
			B-xI & v \\
			v^T & d-x
		\end{bmatrix} + \det \begin{bmatrix}
			B-xI & v \\
			0 & \alpha
		\end{bmatrix} \\
		& = \det \begin{bmatrix}
			B-xI & v \\
			v^T & d+\alpha - x
		\end{bmatrix}.
	\end{align*}
	Thus $f + \alpha g$ is the characteristic polynomial of a Hermitian matrix and has real roots. From Theorem~\ref{interlacing-roots-of-polynomial}, the proof is complete. 
\end{proof}

\begin{cor}
	Let $M$ be a $n \times n$ matrix with exactly one positive eigenvalue. If $A$ is a principal matrix $A$ of size $2 \times 2$ such that there exists $v \in \RR^2$ with $v^T A v > 0$, then $\det (A) \leq 0$. 
\end{cor}

\begin{proof}
	From Theorem~\ref{cauchy-interlacing}, the matrix $A$ has at most one positive eigenvalue. Since $v^T A v > 0$, we know that $A$ has exactly one positive eigenvalue. The other eigenvalue must be at most $0$. Hence, we have $\det (A) \leq 0$. This suffices for the proof. 
\end{proof}

\section{Alexandrov's Inequality for Mixed Discriminants} \label{sec:a-inequality}

In this section we prove Alexandrov's inequality for mixed discriminants. This is an inequality which describes the log-concavity of mixed discriminants. The equality cases of Alexandrov's inequality are fully resolved in \cite{Panov_1987}. In Section~\ref{sec:af-inequality}, we will discuss the mixed volume analog. In Theorem~\ref{A-Inequality-MIXED-DISCRIMINANT}, we give a special case of the mixed discriminant inequality where the equality case is simple. In Theorem~\ref{thm:only-inequality-mixed-discriminants} we give a more general statement of Alexandrov's inequality without mention of equality cases. 

\begin{thm} [Alexandrov's Inequality for Mixed Discriminants] \label{A-Inequality-MIXED-DISCRIMINANT}
	Let $A_1, \ldots, A_{n-2}$ be real symmetric positive definite $n \times n$ matrices. Let $X$ be a real symmetric positive definite $n \times n$ square matrix and let $Y$ be a real symmetric positive semidefinite $n \times n$ square matrix. Then 
	\[
		\mathsf{D}(X, Y, A_1, \ldots, A_{n-2})^2 \geq \mathsf{D}(X, X, A_1, \ldots, A_{n-2}) \cdot \mathsf{D} (Y, Y, A_1, \ldots, A_{n-2})
	\]
	where equality holds if and only if $Y = \lambda X$ for a real number $\lambda \in \RR$. 
\end{thm}

We present a proof given in \cite{bochner} of the inequality Theorem~\ref{thm:only-inequality-mixed-discriminants}. For a proof of Theorem~\ref{A-Inequality-MIXED-DISCRIMINANT} including the equality cases, we refer the reader to \cite{Panov_1987}. 

\begin{thm} \label{thm:only-inequality-mixed-discriminants}
	Let $A, B, M_1, \ldots, M_{n-2}$ be $n$-dimensional symmetric matrices where the last $n-1$ matrices are positive semi-definite. Then, we have 
	\[
		\mathsf{D}(A, B, M_1, \ldots, M_{n-2})^2 \geq \mathsf{D}(A, A, M_1, \ldots, M_{n-2}) \mathsf{D} (B, B, M_1, \ldots, M_{n-2}).
	\]
\end{thm}

Before we begin the proof, we first prove the following result from \cite{hardy1952inequalities} about real-rootedness. Only the case $n = 2$ is needed. Even though the case $n = 2$ can be proven using elementary facts about quadratic polynomials, we prove the general result for the sake of completeness. 

\begin{lem} [\cite{hardy1952inequalities} and Lemma 5.3.2 in \cite{bapat_raghavan_1997}] \label{newton}
	Let $p(x) = \sum_{k = 0}^n \binom{n}{k} a_k x^k$ be a real-rooted univariate polynomial. Then, we have $a_k^2 \geq a_{k-1}a_{k+1}$ for all $1 \leq i \leq n-1$. If $a_0 \neq 0$ then equality occurs for any $i$ if and only if all the roots of $f(x)$ are equal. 
\end{lem}

\begin{proof}
	From Rolle's Theorem, if a polynomial is real-rooted, then so are all of its derivatives. When we reverse the coefficients of a real-rooted polynomial, it remains real-rooted. We call the result of reversing the coefficients of a polynomial the \textbf{reciprocation} of the polynomial. Through a sequence of derivatives and reciprocations, we can reach a polynomial of the form $\alpha_{i-1} + 2\alpha_i x + \alpha_{i+1} x^2$. Since we arrived at this polynomial through a sequence of derivatives and reciprocations, we know that this polynomial is real-rooted. Hence, the discriminant is non-negative and $\alpha_i^2 \geq \alpha_{i-1} \alpha_{i+1}$. The equality case follows from the fact that Rolle's Theorem implies that the roots of the derivative of a real-rooted polynomial interlaces the roots of the original polynomial. 
\end{proof}

We will employ an inductive proof of Theorem~\ref{A-Inequality-MIXED-DISCRIMINANT}. Lemma~\ref{newton} proves the base case immediately. For the base case, we want to prove that if $A, B$ are two-dimensional symmetric matrices with $B$ positive semi-definite, then $\mathsf{D}(A, B)^2 \geq \mathsf{D}(A, A) \mathsf{D}(B, B)$. From continuity, we can assume that $B$ is positive definite. Thus, we have that the polynomial $p(t)$ given by 
\[
	p(t) = \det (A + tB) = \det (B) \cdot \det (B^{-1/2} A B^{-1/2} + t)
\] 
is real-rooted. Since the coefficients of $\det (A + tB)$ are $\mathsf{D}(A, A), 2 \mathsf{D}(A, B)$, and $\mathsf{D}(B, B)$, the base case follows. Now we prove a subcase of Theorem~\ref{thm:only-inequality-mixed-discriminants} given in Lemma 4.2 in \cite{bochner}. The proof will require a result about hyperbolic forms that we introduce later in the thesis (see Lemma~\ref{hyperbolic-quadratic-forms}).

\begin{lem} \label{lem:reduced-inequality}
	Let $n \geq 3$ and let $M_1, \ldots, M_{n-3}$ be $n$-dimensional positive definite matrices. Suppose that Theorem~\ref{thm:only-inequality-mixed-discriminants} holds for mixed discriminants of dimension $n-1$. Then, for any $n$-dimensional diagonal matrix $Z$, we have 
	\[
		\mathsf{D} (Z, I, I, M_1, \ldots, M_{n-3} )^2 \geq \mathsf{D}(Z, Z, I, M_1, \ldots, M_{n-3}) \mathsf{D} (I, I, I, M_1, \ldots, M_{n-3}).
	\]
\end{lem}

\begin{proof}
	For any $x, y \in \RR^n$, we can define the bilinear form 
	\begin{align*}
		D(x, y) & := \mathsf{D} (\operatorname{Diag}(x), \operatorname{Diag}(y), I, M_1, \ldots, M_{n-3} ) \\
		& = \mathsf{D} \left ( \sum_{i = 1}^n x_i e_ie_i^T, \operatorname{Diag}(y), I, M_1, \ldots, M_{n-3} \right ) \\
		& = \frac{1}{n} \sum_{i = 1}^n x_i \mathsf{D} \left ( \operatorname{Diag}(y)^{\langle i \rangle}, I, M_1^{\langle i \rangle}, \ldots, M_{n-3}^{\langle i \rangle} \right ).
	\end{align*}
	The equality follows from linearity and Lemma~\ref{lem:mixed-discriminant-properties}(c). We can defined the linear map $A : \RR^n \to \RR^n$ and constants $p_i$ for $1 \leq i \leq n$ by
	\begin{align*}
		Ay & :=  \sum_{i = 1}^n \frac{\mathsf{D}(\operatorname{Diag}(y)^{\langle i \rangle}, I, M_1^{\langle i \rangle}, \ldots, M_{n-3}^{\langle i \rangle})}{\mathsf{D}(I, I, M_1^{\langle i \rangle}, \ldots, M_{n-3}^{\langle i \rangle})} \\
		p_i & := \frac{1}{n} \mathsf{D} (I, I, M_1^{\langle i \rangle}, \ldots, M_{n-3}^{\langle i \rangle}).
	\end{align*}
	Note that in the definition of $Ay$ we are allowed to divide by the mixed discriminants $\mathsf{D}(I, I, M_1^{\langle i \rangle}, \ldots, M_{n-3}^{\langle i \rangle})$ because all of the arguments are positive definite. In particular, $p_i > 0$ for all $i$. If we define the inner product $\langle x, y \rangle_P := \langle x, P y \rangle$ where $P = \operatorname{Diag}(p_1, \ldots, p_n)$, it follows that 
	\[
		\langle x, Ay \rangle_P = \langle x, PA y \rangle = \mathsf{D}(\operatorname{Diag}(x), \operatorname{Diag}(y), I, M_1, \ldots, M_{n-3} ).
	\]
	Since the mixed discriminant is symmetric, we know $A$ is self-adjoint with respect to the inner product $\langle \cdot, \cdot \rangle_P$. Moreover, we know that $A \sum_{i = 1}^n e_i = \sum_{i = 1}^n e_i$ and $A$ is a positive matrix. From Theorem 1.4.4 in \cite{bapat_raghavan_1997}, the largest eigenvalue of $A$ is $1$ and this is a simple eigenvalue. Using the assumption that Alexandrov's inequality holds for dimension $n-1$, we can bound
	\begin{align*}
		\langle Ay, Ay \rangle_P & = \frac{1}{n} \sum_{i = 1}^n \frac{\mathsf{D}(\operatorname{Diag}(y)^{\langle i \rangle}, I, M_1^{\langle i \rangle}, \ldots, M_{n-3}^{\langle i \rangle})^2}{\mathsf{D} (I, I, M_1^{\langle i \rangle}, \ldots, M_{n-3}^{\langle i \rangle})} \\
		& \geq \frac{1}{n} \sum_{i = 1}^n \mathsf{D}(\operatorname{Diag}(y)^{\langle i \rangle}, \operatorname{Diag}(y)^{\langle i \rangle}, M_1^{\langle i \rangle}, \ldots, M_{n-3}^{\langle i \rangle} ) \\
		& = \frac{1}{n} \mathsf{D}(I, \operatorname{Diag}(y), \operatorname{Diag}(y), M_1, \ldots, M_{n-3}) \\
		& = \langle y, Ay \rangle_P.
	\end{align*}
	For any eigenvalue of $v$ of $A$ with corresponding eigenvalue $\lambda$, we must have 
	\[
		\lambda^2 \norm{v}_P^2 \geq \lambda \norm{v}_P^2 \implies \lambda \geq 1 \text{ or } \lambda \leq 0.
	\]
	This implies that the positive eigenspace of $A$ has dimension $1$. From Lemma~\ref{hyperbolic-quadratic-forms}, this suffices for the proof. 
\end{proof}

\begin{proof}[Proof of Theorem~\ref{thm:only-inequality-mixed-discriminants}.]
	To complete the proof of Theorem~\ref{thm:only-inequality-mixed-discriminants} it suffices to show that we can reduce the inequality to that in Lemma~\ref{lem:reduced-inequality}. We omit this proof and refer the reader to the proof of Corollary 4.3 in \cite{bochner}. 
\end{proof}

Corollary~\ref{mixed-discriminant-log-concave-sequence} illustrates that mixed discriminants involving two positive definite bodies are log-concave. Corollary~\ref{cor-one-implies-all-mixed-discriminants} says that in this situation equality at one point will give equality at all points. 

\begin{cor} \label{mixed-discriminant-log-concave-sequence}
	Let $A, B$ be $n \times n$ positive definite symmetric real matrices. For $0 \leq k \leq n$, define the mixed discriminant
	\[
		D_k := \mathsf{D} (\underbrace{A, \ldots, A}_{k \text{ times}}, \underbrace{B \ldots, B}_{n-k \text{ times}}).
	\]
	Then the sequence $D_0, D_1, \ldots, D_n$ is log-concave. 
\end{cor}

\begin{proof}
	This is an immediate consequence of Theorem~\ref{A-Inequality-MIXED-DISCRIMINANT}. 
\end{proof}

\begin{cor} \label{cor-one-implies-all-mixed-discriminants}
	Let $A, B, D_k$ for $1 \leq k \leq n$ be as in Corollary~\ref{mixed-discriminant-log-concave-sequence}. If $D_k^2 = D_{k-1}D_{k+1}$ for some $1 \leq k \leq n-1$, then $D_k^2 = D_{k-1} D_{k+1}$ holds for all $k = 1, \ldots, n-1$. 
\end{cor}

\begin{proof}
	This follows from the equality case in Theorem~\ref{A-Inequality-MIXED-DISCRIMINANT}. 
\end{proof}
\section{Alexandrov-Fenchel Inequality} \label{sec:af-inequality}

In this section, we prove the Alexandrov-Fenchel inequality. This is a mixed volume analog of Theorem~\ref{A-Inequality-MIXED-DISCRIMINANT}. Formally, these two inequalities look exactly the same. They are very different in terms of their extremals. The extremals of the Alexandrov inequality for mixed discriminants take a rather simple form while the extremals of the Alexandrov-Fenchel inequality are rich in examples. We will discuss more about equality cases in Section~\ref{sec:AF-equality-cases}. 

\begin{thm}[Alexandrov-Fenchel Inequality] \label{AF-inequality}
	Let $K_1, K_2, \ldots, K_{d-2} \subseteq \RR^d$ be convex bodies. For any convex bodies $X, Y \subseteq \RR^d$, we have the inequality
	\[
		\mathsf{V}_d(X, Y, K_1, \ldots, K_{d-2})^2 \geq \mathsf{V}_d(X, X, K_1, \ldots, K_{d-2}) \cdot \mathsf{V}_d(Y, Y, K_1, \ldots, K_{d-2}). 
	\]
\end{thm}

We now present the proof of the inequality given in \cite{bochner}. From Theorem~\ref{approximation-SSI}, we can approximate our convex bodies by SSI polytopes. If we can prove the Alexandrov-Fenchel inequality in the case of SSI polytopes, then the general result will follow from continuity. When restricted to an isomorphism class $\alpha$ of SSI polytopes with facet normals $\mathcal{U}$, the mixed volumes $\mathsf{V}_d (X, Y, P_1, \ldots, P_{d-2})$ can be viewed as a bilinear form applied to the support vectors of $X$ and $Y$. Indeed, for any support vectors $h_P, h_Q$, we can define 
\[
	\mathsf{V}_d (h_P, h_Q, P_1, \ldots, P_{d-2}) := \mathsf{V}_d (P, Q, P_1, \ldots, P_{d-2}).
\]
We can extend this definition to all of $\RR^{\mathcal{U}}$ using Corollary~\ref{perturbation-SSI}. Indeed, for any $x, y \in \RR^\mathcal{U}$ there are polytopes $A_1, A_2, B_1, B_2 \in \alpha$ such that $x = h_{A_2} - h_{A_1}$ and $y = h_{B_2} - h_{B_1}$. We define 
\[
	\mathsf{V}_d (x, y, P_1, \ldots, P_{d-2}) = \sum_{i, j = 1}^2 (-1)^{i+j}\mathsf{V}_d (h_{A_i}, h_{B_j}, P_1, \ldots, P_{d-2}).
\]
To prove that this extension is well-defined, it suffices to prove that if $x = h_K - h_L = h_P - h_Q$ for $K, L, P, Q, P_0 \in \alpha$, then 
\[
	\mathsf{V}_d (h_{K}, \mathcal{P}) - \mathsf{V}_d (h_{L}, \mathcal{P}) = \mathsf{V}_d (h_P,\mathcal{P}) - \mathsf{V}_d(h_Q, \mathcal{P})
\]
where $\mathcal{P} = (P_0, P_1, \ldots, P_{d-2})$. Note that $x = h_K - h_L = h_P - h_Q$ implies that $h_K + h_Q = h_L + h_P$. Thus $h_{K+Q} = h_{L+ P}$ where $K+Q, L+P \in \alpha$ from Proposition~\ref{families-of-strongly-isomorphic} and so $K + Q = L + P$. This allows us to make the computation
\begin{align*}
	\mathsf{V}_d(h_K, \mathcal{P}) + \mathsf{V}_d (h_Q, \mathcal{P}) & = \mathsf{V}_d (K, \mathcal{P}) + \mathsf{V}_d (Q, \mathcal{P}) \\
	& = \mathsf{V}_d (K + Q, \mathcal{P}) \\
	& = \mathsf{V}_d (L + P, \mathcal{P}) \\
	& = \mathsf{V}_d(L, \mathcal{P}) + \mathsf{V}_d (P, \mathcal{P}) \\
	& = \mathsf{V}_d(h_L, \mathcal{P}) + \mathsf{V}_d (h_P,\mathcal{P}).
\end{align*}
This proves that the extension of $\mathsf{V}_d (\cdot, \cdot, P_1, \ldots, P_{d-2})$ to $\RR^{\mathcal{U}}$ is well-defined. It suffices to prove the inequality written in Theorem~\ref{AF-SSI-version}. We can also extend the mixed volumes of support vectors to the mixed volumes of the faces of the support vectors. For all $u \in \mathcal{U}$ and $x \in \RR^\mathcal{U}$, there is a well-defined extension 
\[
	\mathsf{V}_{d-1} (F(x, u), F(\mathcal{P}, u)) := \mathsf{V}_{d-1}(F(Q, u), F(\mathcal{P}, u)) - \mathsf{V}_{d-1}(F(Q', u), F(\mathcal{P}, u))
\]
where $F(\mathcal{P}, u) := (F(P_3, u), \ldots, F(P_d, u))$ and $x = h_Q - h_{Q'}$. We can define the matrix $\widetilde{A} : \RR^\mathcal{U} \to \RR^{\mathcal{U}}$ given by 
\[
	\widetilde{A} x := \frac{1}{d} \sum_{u \in \mathcal{U}} \mathsf{V}_{d-1} (F(x, u), F(P_3, u), \ldots, F(P_d, u)) \cdot e_u.
\]
This matrix satisfies the property that 
\begin{align*}
	\langle h_Q, \widetilde{A} h_P \rangle & = \frac{1}{d} \sum_{u \in \mathcal{U}} h_Q(u) \cdot \mathsf{V}_{d-1} (F(P, u), F(P_3, u), \ldots, F(P_d, u)) \\
	& = \mathsf{V}_d (P, Q, P_3, \ldots, P_d).
\end{align*}
By linearity, we have the equality $\langle x, \widetilde{A} y \rangle = \mathsf{V}_d (x, y, P_3, \ldots, P_d)$ and $\widetilde{A}$ is a symmetric matrix. We now prove that $\widetilde{A}$ is irreducible by proving that the non-zero entries correspond exactly to the graph on facets where two facets are adjacent if and only if their intersection is a face of dimension $d-2$. 

\begin{lem} \label{lemma-structure-of-matrix-widetilde-A}
	Let $d \geq 3$. Then the matrix $\widetilde{A}$ is a symmetric irreducible matrix with non-negative off-diagonal entries. 	
\end{lem}

\begin{proof}
	Let $\mathcal{U} = \{u_1, \ldots, u_m\}$ be the facet normals of the strong isomorphism class of our polytopes where the facet corresponding to $u_i$ is $F_i$. For $i, j \in [m]$ we write $i \sim j$ if and only if $F_i \cap F_j$ is a face of dimension $d-2$. We write $F_{ij} = F_i \cap F_j$ when we consider $F_i \cap F_j$ as a facet of $F_i$. Let $u_{ij}$ be the facet normal of $F_{ij}$ in $F_i$. For $i \sim j$, let $\theta_{ij}$ be the angle satisfying $\langle u_i, u_j \rangle = \cos \theta_{ij}$. Note that $\codim F_i = \codim F_j = 1$, $\codim F_{ij} = 2$. Since no two of $u_i, u_j, u_{ij}$ are linearly independent, there are coefficients $a_i, a_{ij} \in [-1, 1]$ such that $u_j = a_{ij} u_{ij} + a_i u_i$ where $a_i^2 + a_{ij}^2 = 1$. By taking inner products with $u_i$, we get that $a_i = \cos \theta_{ij}$. This implies that $a_{ij} = \pm \cos \theta_{ij}$. By negating $\theta_{ij}$ is necessary, we have 
	\[
		u_j = (\cos \theta_{ij}) u_i + (\sin \theta_{ij}) u_{ij} \implies u_{ij} = (\csc \theta_{ij}) u_j - (\cot \theta_{ij}) u_i.
	\]
	We can then compute the support value of $F_{ij}$ as a face of $F_i$:
	\begin{align*}
		h_{F(P, u_i)} (u_{ij}) & = \sup_{x \in F(P, u_i)} \langle u_{ij}, x \rangle \\
		& = \sup_{x \in F_i(P)} \langle (\csc \theta_{ij}) u_j - (\cot \theta_{ij}) u_i, x \rangle \\
		& = (\csc \theta_{ij}) \sup_{x \in F(P, u_i)} \langle u_j, x \rangle - (\cot \theta_{ij}) h_P(u_i) \\
		& = (\csc \theta_{ij}) h_P(u_j) - (\cot \theta_{ij}) h_P(u_i).
	\end{align*}
	For $i \sim j$, we can define the constants
	\[
		A_{ij} := \frac{\mathsf{V}_{d-2}(F(F(P_3, u_i), u_{ij}), \ldots, F(F(P_d, u_i), u_{ij}))}{d(d-1)}.
	\]
	Then, for $x = h_P = \sum_{i = 1}^m x_i e_i$ where $x_i = h_P(u_i)$, we have that 
	\begin{align*}
		\widetilde{A} x & = \frac{1}{d} \sum_{i \in [m]} \mathsf{V}_{d-1} (F(P, u_i), F(P_3, u_i), \ldots, F(P_d, u_i)) \cdot e_i \\
		& = \sum_{i \in [m]}  \left ( \sum_{j \sim i} A_{ij} h_{F(P, u_i)}(u_{ij}) \right ) \cdot e_i \\
		& = \sum_{i \in [m]} \left (  \sum_{j \sim i} A_{ij} (\csc \theta_{ij}) x_j - A_{ij} (\cot \theta_{ij}) x_i) \right ) \cdot e_i \\
		& = \sum_{i \in [m]} \left ( \sum_{j \sim i} A_{ij} (\csc \theta_{ij}) x_j \right ) e_i - \sum_{i \in [m]} \left ( \sum_{j \sim i} A_{ij} \cot \theta_{ij} \right ) x_i \cdot e_i.
	\end{align*}
	For $i \in [m]$, we have that 
	\[
		(\widetilde{A})_{ii} = \langle e_i, \widetilde{A}e_i \rangle = - \sum_{j \sim i} A_{ij} \cot \theta_{ij}.
	\]
	For $i, j \in [m]$ distinct, we have 
	\[
		(\widetilde{A})_{ij} = \langle e_i, \widetilde{A}e_j \rangle = \1_{i \sim j} \cdot (A_{ij} \csc \theta_{ij}).
	\]
	When $i \sim j$, then $(\widetilde{A})_{ij} > 0$. This implies that the non-zero entries of $\widetilde{A}$ except the diagonals have the same non-zero positions as the non-zero entries in the adjacency matrix of the graph on facets where two facets are adjacent if and only if $i \sim j$. This graph is clearly strongly-connected, which proves our matrix $\widetilde{A}$ is irreducible. 
\end{proof}

\begin{thm}[Alexandrov-Fenchel Inequality for SSI Polytopes] \label{AF-SSI-version}
	Let $\alpha$ be a strong isomorphism class with facet normals $\mathcal{U} = \{u_1, \ldots, u_m\}$ of simple strongly isomorphic polytopes $P_2, \ldots, P_d$. Then, for all $x, y \in \RR^m$ we have the inequality
	\[
		\mathsf{V}_d (x, P_2, \mathcal{P})^2 \geq \mathsf{V}_d (x, x, \mathcal{P}) \cdot \mathsf{V}_d (P_2, P_2, \mathcal{P})
	\]
	where $\mathcal{P} := (P_3, \ldots, P_{d})$.
\end{thm}

The inequality of Theorem~\ref{AF-SSI-version} with respect to a bilinear form implies that the bilinear form $\mathsf{V}_d (x, y, \mathcal{P})$ has similar properties to a bilinear form with respect to a \textbf{hyperbolic} matrix. We call a symmetric matrix $M \in \RR^{d \times d}$ \textbf{hyperbolic} if for all $v, w \in \RR^d$ satisfying $\langle w, Mw \rangle \geq 0$, we have 
\[
		\langle v, Mw \rangle^2 \geq \langle v, Mv \rangle \langle w, M w \rangle. 
\]
From Lemma 1.4 in \cite{bochner}, we find necessary and sufficient conditions for a matrix to be hyperbolic. 

\begin{lem}[Lemma 1.4 in \cite{bochner}] \label{hyperbolic-quadratic-forms}
	Let $M$ be a symmetric matrix. Then, the following conditions are equivalent:
	\begin{enumerate}[label = (\alph*)]
		\item $M$ is hyperbolic. 

		\item The positive eigenspace of $M$ has dimension at most one. 
	\end{enumerate}
\end{lem}

\begin{proof}[Proof of Theorem~\ref{AF-SSI-version}]
	This proof follows that of \cite{bochner}. We induct on the dimension $d$. For the base case $d = 2$, see Lemma~\ref{base-case-for-AF-inequality} in the appendix. Now suppose that the claim is true for dimensions less than $d$. Currently, it is not clear that the matrix $\widetilde{A}$ is hyperbolic. We will alter it to become a hyperbolic matrix which is self-adjoint with respect to a different bilinear form. For $u \in \mathcal{U}$, we can define the matrix $A \in \RR^{\mathcal{U} \times \mathcal{U}}$ and diagonal matrix $P = \text{Diag}(p_u : u \in \mathcal{U}) \in \RR^{\mathcal{U} \times \mathcal{U}}$ such that 
	\begin{align*}
		Ax & := \sum_{u \in \mathcal{U}} \frac{h_{P_3}(u) \mathsf{V}_{d-1} (F(x, u), F(P_3, u), \ldots, F(P_n, u))}{\mathsf{V}_{d-1}(F(P_3, u), F(P_3, u), \ldots, F(P_n, u))} \cdot e_u\\
		p_u & := \frac{1}{d} \frac{\mathsf{V}_{d-1} (F(P_3, u), F(P_3, u), \ldots, F(P_d, u))}{h_{P_3}(u)}.
	\end{align*}
	These definitions are well-defined because we can always translate our polytopes so that $0 \in \text{int}(P_3)$ or equivalently $h_{P_3} > 0$. If we define the inner product $\langle x, y \rangle_{P} := \langle x, P y \rangle$, then we have that 
	\[
		\langle x, Ay \rangle_P = \langle x, \widetilde{A} y \rangle = \mathsf{V}_d (x, y, P_3, \ldots, P_d)
	\]
	since $\widetilde{A} = PA$. In particular, the matrix $A$ is self-adjoint with respect to the inner product $\langle \cdot, \cdot \rangle_P$. From Lemma~\ref{lemma-structure-of-matrix-widetilde-A}, we know that $A$ is irreducible with non-negative entries in the off-diagonal entries. Moreover, we have that 
	\[
		A(h_{P_3}) = \sum_{u \in \mathcal{U}}\frac{h_{P_3}(u) \mathsf{V}_{d-1} (F(P_3, u), F(P_3, u), \ldots, F(P_n, u))}{\mathsf{V}_{d-1}(F(P_3, u), F(P_3, u), \ldots, F(P_n, u))} \cdot e_u = \sum_{u \in \mathcal{U}} h_{P_3}(u) \cdot e_u = h_{P_3}.
	\]
	Thus $A$ has an eigenvector of eigenvalue $1$. Let $c > 0$ be sufficiently large so that $A + c I$ has non-negative entries. From the Perron-Frobenius Theorem (see Theorem 1.4.4 in \cite{bapat_raghavan_1997}), the vector $h_{P_3}$ is an eigenvector of eigenvalue $1 + c$ of $A + cI$. Since it has strictly positive entries this is the largest eigenvector and it happens to be simple. Thus, the largest eigenvalue of $A$ is $1$ and this eigenvalue has multiplicity $1$. From the inductive hypothesis, we have that
	\begin{align*}
		\langle Ax, Ax \rangle_P & = \sum_{u \in \mathcal{U}} (Ax)_u^2 p_u \\
		& = \sum_{u \in \mathcal{U}} \frac{1}{d} \cdot \frac{h_{P_3}(u) \mathsf{V}_{d-1}(F(x, u), F(P_3, u), \ldots, F(P_d, u))^2}{\mathsf{V}_{d-1}(F(P_3, u), F(P_3, u), \ldots, F(P_d, u))} \\
		& \geq \sum_{u \in \mathcal{U}} \frac{1}{d} h_{P_3}(u) \cdot \mathsf{V}_{d-1} (F(x, u), F(x, u), F(P_4, u), \ldots, F(P_d, u))^2 \\
		& = \mathsf{V}_d (x, x, P_3, \ldots, P_d) \\
		& = \langle x, Ax \rangle_P.
	\end{align*}
	Let $\lambda$ be an arbitrary eigenvalue of $A$. For the corresponding eigenvector $v$, we have that 
	\[
		\langle Av, Av \rangle_P \geq \langle v, Av \rangle_P \implies \lambda^2 \geq \lambda. 
	\]
	Thus, the eigenvalues satisfy $\lambda \geq 1$ or $\lambda \leq 0$. From Lemma~\ref{hyperbolic-quadratic-forms}, we know that $A$ is hyperbolic. Since $\langle h_{P_2}, A h_{P_2} \rangle_P = \mathsf{V}_d (P_2, P_2, \mathcal{P}) > 0$, we know from hyperbolicity that 
	\begin{align*}
		\mathsf{V}_d (x, P_2, \mathcal{P})^2 & = \langle x, A h_{P_2} \rangle_P^2 \\
		& \geq \langle x, Ax \rangle_P \cdot \langle h_{P_2}, Ah_{P_2} \rangle_P \\
		& = \mathsf{V}_d(x, x, \mathcal{P}) \cdot \mathsf{V}_d (P_2, P_2, \mathcal{P}).
	\end{align*}
	This completes the induction and suffices for the proof. 
\end{proof}

\begin{cor} \label{AF-log-concavity}
	Let $K, L \subseteq \RR^n$ be convex bodies. For $0 \leq k \leq n$, we can define the mixed volumes
	\[
		V_k := \mathsf{V}_n (\underbrace{K, \ldots, K}_{k \text{ times}}, \underbrace{L, \dots, L}_{n-k \text{ times}}).
	\]
	Then, the sequence $V_0, V_1, \ldots, V_n$ is log-concave. 
\end{cor}

\begin{proof}
	From Theorem~\ref{AF-inequality}, we immediately get $V_k^2 \geq V_{k-1}V_{k+1}$. This suffices for the proof. 
\end{proof}

\subsection{Equality Cases} \label{sec:AF-equality-cases}

In both Alexandrov's inequality for mixed discriminants and the Alexandrov-Fenchel inequality, the equality cases are split into two types: supercritical and critical. Unlike the Alexandrov's inequality for mixed discriminants, the equality cases of the Alexandrov-Fenchel inequality are rich and highly non-trivial even in the supercritical case. In this thesis, we do not provide any deep explanations for the mechanisms involved in the equality of the Alexandrov-Fenchel inequality as this is not our main focus. If interested in these topics, we refer the reader to \cite{minkowski-quadratic-inequality,shenfeld2022extremals}. We provide an overview of the equality cases of the Alexandrov-Fenchel inequality. We mainly list the results from \cite{shenfeld2022extremals} that we will need in Chapter~\ref{log-concavity-results}.

The Alexandrov-Fenchel inequality can be viewed as a generalization of Minkowski's Inequality and various isoperimetric inequalities. We recall the Brunn-Minkowski inequality and its equality cases in Theorem~\ref{brunn-minkowski}.
\begin{thm}[Brunn-Minkowski Inequality, Theorem 3.13 in \cite{Hug2020-ue}] \label{brunn-minkowski}
	Let $K, L \subseteq \RR^n$ be convex bodies and $\alpha \in (0, 1)$. Then
	\[
		\sqrt[n]{V(\alpha K + (1-\alpha) L)} \geq \alpha \sqrt[n]{V(K)} + (1-\alpha) \sqrt[n]{V(L)}
	\]
	with equality if and only if $K$ and $L$ lie in parallel hyperplanes or $K$ and $L$ are homothetic. 
\end{thm}

The inequality in Theorem~\ref{brunn-minkowski} will explain the equality cases of Minkowski's Inequality (Theorem~\ref{minkowski-inequality}) which follows from the Alexandrov-Fenchel inequality.

\begin{thm}[Minkowski's Inequality, Theorem 3.14 in \cite{Hug2020-ue}] \label{minkowski-inequality}
	Let $K, L \subseteq \RR^n$ be convex bodies. Then, 
	\[
		\mathsf{V}_n(\underbrace{K, \ldots, K}_{n-1 \text{ times}}, L)^n \geq \Vol_n(K)^{n-1} \cdot \Vol_n(L).
	\]
	Equality occurs if and only if $\dim K \leq n-2$ or $K$ and $L$ lie in parallel hyperplanes or $K$ and $L$ are homothetic. When $\mathsf{V}_n(K[k], L[n-k]) > 0$ for all $k$, then equality occurs if and only if 
	\[
		\mathsf{V}_n(\underbrace{K, \ldots, K}_{k \text{ times}}, \underbrace{L, \ldots, L}_{n-k \text{ times}})^2 = \mathsf{V}_n(\underbrace{K, \ldots, K}_{k-1 \text{ times}}, \underbrace{L, \ldots, L}_{n-k+1 \text{ times}}) \cdot \mathsf{V}_n(\underbrace{K, \ldots, K}_{k+1 \text{ times}}, \underbrace{L, \ldots, L}_{n-k-1 \text{ times}})
	\]
	for all $k$. 
\end{thm}

\begin{proof}
	For $k$, $0 \leq k \leq n$, define the mixed volume $V_k = \mathsf{V}_n (K[k], L[n-k])$. Then, we want to prove that $V_{n-1}^n \geq V_n^{n-1}V_0$. Assuming that $V_k > 0$ for all $k$, from Theorem~\ref{AF-inequality}, we have that 
	\[
		\frac{V_{n-1}}{V_n} \geq \frac{V_{n-2}}{V_{n-1}} \geq \ldots \geq \frac{V_{0}}{V_1}.
	\]
	Thus, we have that 
	\[	
		\left ( \frac{V_{n-1}}{V_n} \right )^n \geq \prod_{k = 1}^{n-1} \frac{V_k}{V_{k-1}} = \frac{V_0}{V_n}.
	\]
	This proves that $V_{n-1}^n \geq V_n^{n-1} \cdot V_0$. When $V_k > 0$, equality occurs if and only if all of the ratios $V_k / V_{k-1}$ are equal, which is equivalent to the Alexandrov-Fenchel inequality holding at each $k$: $V_k^2 = V_{k-1} V_{k+1}$. Now, suppose we remove the assumption that $V_k > 0$ for all $k$. Then we can still prove the inequality via approximation by SSI polytopes. This method would not allow us to recover equality cases. From the proof of Theorem 3.14 in \cite{Hug2020-ue}, using the concavity of $f(t) := \Vol_n (K + t L)^{1/n}$ we can prove that 
	\[
		\Vol_n(K)^{\frac{1}{n} - 1} \mathsf{V}_n (K[n-1], L) \geq \Vol_n(K+L)^{\frac{1}{n}} - \Vol_n(K)^{\frac{1}{n}} \geq \Vol_n(L)^{\frac{1}{n}}
	\]
	where the second inequality is exactly Theorem~\ref{brunn-minkowski}. The equality conditions in Theorem~\ref{brunn-minkowski} then give us the equality conditions in the theorem. 
\end{proof}

In general, the equality cases of the Alexandrov-Fenchel inequality are complicated. Indeed, the previous pattern of having only degenerate equality cases or homothetic equality cases ends when one considers even the simplest examples of Minkowski's Quadratic inequality. Minkowski's quadratic ienquality is version of the Alexandrov-Fenchel inequality where $n = 3$ and we have convex bodies $K, L, M \subseteq \RR^3$ satisfying 
\begin{equation} \label{minkowski-quadratic-inequality-model}
	\mathsf{V}_3(K, L, M)^2 \geq \mathsf{V}_3 (K, K, M) \mathsf{V}_3 (L, L, M). 
\end{equation}
In \cite{minkowski-quadratic-inequality}, Yair Shenfeld and Ramon van Handel completely characterize the extremals of Equation~\ref{minkowski-quadratic-inequality-model}. Even in the case where $M = K, L = B$ the equality cases are non-trivial. The inequality becomes
\begin{equation} \label{isoperimetric-inequality-in-degree-3}
	\mathsf{V}_3 (B, K, K)^2 \geq \mathsf{V}_3(B, B, K) \mathsf{V}_3(K, K, K). 
\end{equation}
Each of these mixed volumes has a geometric interpretation: $\mathsf{V}_3 (B, K, K)$ is the surface area of $K$, $\mathsf{V}_3 (B, B, K)$ is the mean width of $K$, and $\mathsf{V}_3 (K, K, K)$ is the volume of $K$. The problem of finding equality cases to Equation~\ref{isoperimetric-inequality-in-degree-3} can be rephrased as an isoperimetric inequality: given a fixed mean width and fixed volume, what convex body $K$ will achieve the minimum surface area? This is a generalization of the classical isoperimetric inequality:
\[
	\mathsf{V}_2(B, K)^2 \geq \mathsf{V}_2(B, B) \mathsf{V}_2(K, K)
\]
which the equality cases is trivial. Unlike the classical case, the equality cases of Equation~\ref{isoperimetric-inequality-in-degree-3} involve convex bodies called cap bodies which are the convex hull of the ball with some collection of points satisfying some disjointness conditions. From \cite{Bol}, the cap bodies end up being the only equality cases of the quadratic isoperimetric inequality. 

\subsection{Equality Cases for Polytopes}

In \cite{minkowski-quadratic-inequality}, Yair Shenfeld and Ramon van Handel completely characterize the equality cases of the Alexandrov-Fenchel inequality for polytopes. We record some of their results in this section for future use in this thesis. In \cite{shenfeld2022extremals}, they define the notion of a supercritical collection of convex bodies. 

\begin{defn}[Definition 2.14 in \cite{shenfeld2022extremals}]
	A collection of convex bodies $\mathcal{C} = (C_1, \ldots, C_{n-2})$ is \textbf{supercritical} if $\dim (C_{i_1} + \ldots + C_{i_k}) \geq k+2$ for all $k \in [n-2]$, $1 \leq i_1 < \ldots < i_k \leq n-2$. 
\end{defn} 

The paper \cite{shenfeld2022extremals} gives a characterization of the equality cases of 
\[
	\mathsf{V}_n(K, L, P_1, \ldots, P_{n-2})^2 \geq \mathsf{V}_n (K, K, P_1, \ldots, P_{n-2}) \mathsf{V}_n (L, L, P_1, \ldots, P_{n-2})
\]
in both the case where $(P_1, \ldots, P_{n-2})$ is critical and the case where $(P_1, \ldots, P_{n-2})$ is supercritical. In the latter case, the equality cases are more simple. Since all of our applications will involve supercritical collections of convex bodies, we will only be concerned with the supercritical characterization. For an application of the critical case, see the paper \cite{ma2022extremals} in which the equality cases of the Stanley's poset inequality are studied.
	
\begin{thm}[Corollary 2.16 in \cite{shenfeld2022extremals}] \label{AF-equality-polytopes-theorem}
	Let $\mathcal{P} = (P_1, \ldots, P_{n-2})$ be a supercritical collection of polytopes in $\RR^n$, and let $K, L$ be convex bodies such that $\mathsf{V}_n (K, L, P_1, \ldots, P_{n-2}) > 0$. Then 
	\[
		\mathsf{V}_n (K, L, P_1, \ldots, P_{n-2})^2 = \mathsf{V}_n (K, K, P_1, \ldots, P_{n-2}) \mathsf{V}_n (L, L, P_1, \ldots, P_{n-2})
	\]
	if and only if there exist $a > 0$ and $v \in \RR^n$ so that $K$ and $a L + v$ have the same supporting hyperplanes in all $(B, \mathcal{P})$-extreme normal directions.
\end{thm}

In Theorem~\ref{AF-equality-polytopes-theorem}, the constant $a > 0$ determines the ratio between the mixed volumes. To see this, define $\mathsf{V} (A, B) := \mathsf{V}_n(A, B, P_1, \ldots, P_{n-2})$ for any convex bodies $A, B \subseteq \RR^n$. From the mixed area measure interpretation of mixed volumes, the value of $\mathsf{V}(A, B)$ only depends on the support function values of $A$ and $B$ in the directions of the support of the mixed area measure. From the result in Theorem~\ref{AF-equality-polytopes-theorem}, the polytopes $K$ and $aL+v$ have the same support values in all relevant directions. Thus, we can interchange the two polytopes when calculating the mixed volume. This implies that
\begin{align*}
	\mathsf{V}(K, K) & = \mathsf{V}(K, aL + v) = a \mathsf{V}(K, L) \\
	\mathsf{V}(K, L) & = \mathsf{V}(aL+v, L) = a \mathsf{V} (L, L).
\end{align*}
Hence, the constant $a > 0$ indicates the ratio between the mixed volumes.

\section{Basic Theory of Lorentzian polynomials} \label{sec:lorentzian-polynomials}

In this section, we give an overview of the theory of Lorentzian polynomials. Lorentzian polynomials are polynomials which encode log-concavity in various ways. According to \cite{baker_2022}, the notion of Lorentzian polynomials was independently discovered by Brändén-Huh \cite{lorentzian-polynomials} and Anari-Oveis-Garan-Vinzant \cite{anari2018logconcave}. For our main reference on the basic theory of Lorentzian polynomials, we primarily use the paper \cite{lorentzian-polynomials} by Brändén-Huh. Following their notation, we let $\opH_n^d$ be the degree $d$ homogeneous subring defined by 
\[
	\opH_n^d := \{f \in \RR[x_1, \ldots, x_n] : \text{$f$ is homogeneous,} \deg (f) = d\}.
\] 
We can turn $\opH_n^d$ into a topological space by equipping it with the Euclidean topology on its coefficients. There are many different equivalent definitions of Lorentzian polynomials. These different definitions make it easier to prove that certain polynomials are Lorentzian. We first define the notion of a strictly Lorentzian polynomial in Definition~\ref{def:strictly-lorentzian-polynomials}. 

\begin{defn} \label{def:strictly-lorentzian-polynomials}
	Let $\underbar{L}_n^2 \subseteq \opH_n^2$ be the set of quadratic forms in $\RR[x_1, \ldots, x_n]$ with positive coefficients such that the Hessian has signature $(+, -, \ldots, -)$. For $d \geq 3$, we can define $\underbar{L}_n^d$ inducively as
	\[
		\underbar{L}_n^d := \{f \in H_n^d : \partial_i f \in \underbar{L}_n^{d-1} \text{ for all $i$}\}.
	\] 
	We call polynomials in $\underbar{L}_n^d$ \textbf{strictly Lorentzian polynomials}. 
\end{defn}

The space of strictly Lorentzian polynomials $\underbar{L}_n^d$ forms an open subset of $\opH_n^d$. One defintion of Lorentzian polynomials is that it is any polynomial in the closure of $\underbar{L}_n^d$ as a subset in the topological space $\opH_n^d$. The next definition of Lorentzian polynomials combines analytic and combinatorial properties of the polynomial. Specifically, the properties of the polynomial as a function on $\mathbb{C}^n$ and the support structure of its monomials. 

\begin{defn} \label{def:stable-polynomial}
	A polynomial $f \in \RR[w_1, \ldots, w_n]$ is \textbf{stable} if $f$ is non-vanishing on $\mathbb{H}^n$ where $\mathbb{H}$ is the open upper half plane in $\mathbb{C}$. We denote by $S_n^d$ the set of degree $d$ homogeneous stable polynomials in $n$ variables. 
\end{defn}

In this thesis, we will not concern ourselves with the notion of stable polynomials. We have only written Definition~\ref{def:stable-polynomial} because it is a part of an alternative definition of Lorentzian polynomials. For background on stable polynomials, we refer the reader to \cite{wagner2009multivariate}. 

\begin{defn}
	We define $J \subseteq \NN^n$ to be \textbf{$M$-convex} if for any $\alpha, \beta \in J$ and any index $i$ satisfying $\alpha_i > \beta_i$, there is an index $j$ satisfying $\alpha_j < \beta_j$ and $\alpha - e_i + e_j \in J$. Equivalently, we have that for any $\alpha, \beta \in J$ and any index $i$ satisfying $\alpha_i > \beta_i$, there is an index $j$ satisfying $\alpha_j < \beta_j$ and $\alpha - e_i + e_j \in J$ and $\beta - e_j + e_i \in J$. 
\end{defn}

We will also not concern ourselves too much with $M$-convex sets. In this thesis, the only fact that we use related to $M$-convex sets is that the set of bases of a matroid form a $M$-convex set. Indeed, any M-convex set restricted to the hypercube is a matroid. For more information about $M$-convex sets, we refer the reader to \cite{discete-convex-analysis}. We will consider the $M$-convexity of the support of a homogeneous polynomial. For a polynomial $f = \sum_{\alpha \in \Delta_n^d} c_\alpha x^\alpha$ in $H_n^d$, we define the support of $f$ to be the exponents for which the corresponding coefficient is non-zero. Specifically, we have
\[
	\supp (f) := \{\alpha \in \Delta_n^d : c_\alpha \neq 0\}.
\]
For example, the Newton polygon of any polynomial will depend only on its support. We let $M_n^d$ be the set of degree $d$ homogeneous polynomials in $\RR_{\geq 0} [x_1, \ldots, x_n]$ whose support are $M$-convex. With these notions on hands, we can now define Lorentzian polynomials equivalently as the homogeneous polynomials with non-negative coefficients and $M$-convex support such that all codegree $2$ partial derivatives are stable polynomials. We write the full description of this set in Definition~\ref{def:lorentzian-m-convex-defn}.

\begin{defn}[Definition 2.6 in \cite{lorentzian-polynomials}] \label{def:lorentzian-m-convex-defn}
	We set $L_n^0 = S_n^0$, $L_n^1 = S_n^1$, and $L_n^2 = S_n^2$. For $d \geq 3$, we define 
	\begin{align*}
		L_n^d & = \{f \in M_n^d : \partial_i f \in L_n^{d-1} \text{ for all } i \in [n]\} \\
		& = \{f \in M_n^d : \partial^\alpha f \in S_n^2 \text{ for all } \alpha \in \Delta_n^{d-2}\}.
	\end{align*}
\end{defn}

\begin{defn} \label{defn-lorentzian}
	We call a degree $d$ homogeneous polynomial $f \in \RR_{\geq 0}[x_1, \ldots, x_n]$ a \textbf{Lorentzian polynomial} if it satisfies any of the following equivalent conditions:
	\begin{enumerate}[label = (\alph*)]
		\item $f \in L_n^d$. 

		\item There exists a sequence of polynomials $f_k \in \underbar{L}_n^d$ such that $f_k \to f$ as $k \to \infty$.  

		\item For any $\alpha \in \NN^n$ with $|\alpha| \leq d-2$, the polynomial $\partial^\alpha f$ is identically zero or log-concave at any $a \in \RR_{>0}^n$. 
	\end{enumerate}
\end{defn}

Polynomials which satisfy condition (c) in Definition~\ref{defn-lorentzian} are also known as \textbf{strongly log-concave polynomials}. These polynomials were studies in \cite{anari2018logconcave}. For proofs of the equivalences of the conditions in Definition~\ref{defn-lorentzian}, we refer the reader to \cite{lorentzian-polynomials}. As an application of this definition, we will now prove that the basis generating polynomial of a matroid is Lorentzian.

\begin{thm} \label{basis-generating-polynomial-is-lorentzian-thm}
	Let $M$ be a matroid. Then the basis generating polynomial $f_M$ is Lorentzian. 
\end{thm}
\begin{proof}
	We prove the result by induction on the size of our matroid. For matroids of size $2$, the only possible basis generating polynomials are $x_1x_2, x_1 + x_2, x_1, x_2$, and $0$. All of these polynomials are Lorentzian because they are stable. Now, suppose that the claim holds for all matroids of smaller size. Since the support of $f_M$ is the collection of bases of a matroid, the support is $M$-convex. Hence, it suffices to prove that all of its partial derivatives are Lorentzian. For all $i \in E$, we know that $\partial_i f_M = f_{M / i}$ for all $i \in E$. From the inductive hypothesis, the partial derivative is Lorentzian because it the basis generating polynomial of a smaller matroid. This completes the induction and suffices for the proof.  
\end{proof}

In Proposition~\ref{proposition-properties-of-Lorentzian-polynomials}, we have compiled some properties of Lorentzian polynomials. These properties will allow us to prove that certain polynomials are Lorentzian by relating them to other Lorentzian polynomials. 

\begin{prop} \label{proposition-properties-of-Lorentzian-polynomials}
	Let $f \in \RR[x_1, \ldots, x_n]$ be a Lorentzian polynomial of degree $d$. Then, $f$ satisfies the following properties: 
	\begin{enumerate}[label = (\alph*)]
		\item Let $A$ be any $n \times m$ matrix with nonnegative entries. Then $f : \RR^m \to \RR$ defined by $f(y) := f(Ay)$ is Lorentzian. 

		\item For any $a_1, \ldots, a_n \geq 0$, the polynomial $\sum_{i = 1}^n a_i \partial_i f$ is Lorentzian. 

		\item If $f \neq 0$, then $\Hess_f(a)$ has exactly one positive eigenvalue for all $a \in \RR_{> 0}^n$. 
	\end{enumerate}
\end{prop}
\begin{proof}
	For a proof of these properties, see Theorem 2.10, Corollary 2.11, and Proposition 2.14 in \cite{lorentzian-polynomials}. 
\end{proof}

Lorentzian polynomials satisfy log-concavity inequalities similar to that of Alexandrov's inequality on mixed discriminants and the Alexandrov-Fenchel inequality. We present the proofs of two analogs of these log-concavity inequalities given in \cite{lorentzian-polynomials}. 

\begin{prop}[Proposition 4.4 in \cite{lorentzian-polynomials}] \label{proposition-log-concavity-property-of-lorentzian-polynomial}
	If $f = \sum_{\alpha \in \Delta_n^d} \frac{c_\alpha}{\alpha!} x^\alpha$ is a Lorentzian polynomial, then $c_\alpha^2 \geq c_{\alpha + e_i - e_j} c_{\alpha - e_i + e_j}$ for any $i, j \in [n]$ and any $\alpha \in \Delta_n^d$. 
\end{prop}

\begin{proof}
	Consider the differential operator $\partial^{\alpha - e_i - e_j}$. Applying $D$ to the polynomial $f$, we have 
	\begin{align*}
		D f & = \sum_{\beta \in \Delta_n^d} \frac{c_\beta}{\beta!} \partial^{\alpha - e_i - e_j} x^\beta \\
		& = \frac{c_{\alpha + e_i - e_j}}{(\alpha + e_i - e_j)!} \cdot \frac{(\alpha + e_i - e_j)!}{2} x_i^2 + \frac{c_{\alpha - e_i + e_j}}{(\alpha - e_i + e_j)!} \cdot \frac{(\alpha - e_i + e_j)!}{2} x_j^2 + \frac{c_\alpha}{\alpha !} \cdot \alpha! \cdot x_ix_j \\
		& = \frac{1}{2} \left ( c_{\alpha + e_i - e_j} x_i^2 + 2 c_\alpha x_i x_j + c_{\alpha - e_i + e_j} x_j^2 \right ).
	\end{align*}
	From Proposition~\ref{proposition-properties-of-Lorentzian-polynomials}, the polynomial $Df$ is Lorentzian. This suffices for the proof. 
\end{proof}

Recall that the mixed discriminant $\mathsf{D}_n(A_1, \ldots, A_n)$ can be defined as the polarization form of the polynomial $\det (X)$ where $X \in \RR^{n \times n}$. It happens to be the case that the polarization of this polynomial satisfies a log-concavity inequality given by the Alexandrov inequality for mixed discriminants. An analog of this phenomenon occurs for general Lorentzian polynomials. 

\begin{prop} \label{prop:analog-of-alexandrov-fenchel}
	Let $f \in \RR[x_1, \ldots, x_n]$ be a Lorentzian polynomial of degree $d$. Then, for any $v_1, v_2, \ldots, v_d \in \RR_{\geq 0}^n$, we have that 
	\[
		F_f(v_1, v_2, v_3, \ldots, v_d)^2 \geq F_f(v_1, v_1, v_3, \ldots, v_d) \cdot F_f(v_2, v_2, v_3, \ldots, v_d).
	\] 
\end{prop}

\begin{proof}
	The polarization identity in Theorem~\ref{polariziation-identity} gives us the equation 
	\begin{equation} \label{eqn:yesyesyes}
		f(x_1 v_1 + \ldots + x_m v_m) = d! \sum_{\alpha \in \Delta_m^d} \frac{F_f(v_1[\alpha_1], \ldots, v_m[\alpha_m])}{\alpha!} x^\alpha
	\end{equation}
	The result follows from Proposition~\ref{proposition-log-concavity-property-of-lorentzian-polynomial}. 
\end{proof}

\begin{remark}
	In Proposition 4.5 of \cite{lorentzian-polynomials}, Br\"aden-Huh prove a stronger version of Proposition~\ref{prop:analog-of-alexandrov-fenchel} where we allow the first vector $v_1$ to be any vector in $\RR^n$. The proof of this stronger fact uses the Cauchy interlacing theorem from Section~\ref{sec:cauchy-interlacing-theorem}. This stronger fact also follows from the version we proved in Proposition~\ref{prop:analog-of-alexandrov-fenchel}. Indeed, suppose that $v_1 \in \RR^n$ and $v_2 \in \RR_{> 0}^n$. Then, we can pick $\alpha > 0$ sufficiently large so that $v_1 + \alpha v_2 \in \RR_{> 0}^n$. To simplify notation, we define 
	\[
		\operatorname{F}(x, y) := F_f(x, y, v_3, \ldots, v_d).
	\]
	From Proposition~\ref{prop:analog-of-alexandrov-fenchel}, we have that
	\[
		\operatorname{F}(v_1 + \alpha v_2, v_2)^2 \geq \operatorname{F}(v_1 + \alpha v_2, v_1 + \alpha v_2) \operatorname{F}(v_2, v_2).
	\]
	This is equivalent to $\operatorname{F}(v_1, v_2) \geq \operatorname{F}(v_1, v_1) \operatorname{F}(v_2, v_2)$. From continuity, the inequality holds for all $v_2 \in \RR_{\geq 0}^n$. 
\end{remark}
\begin{remark}
	Through private discussions with Ramon van Handel, it may be possible to use the ideas in \cite{shenfeld2022extremals} to give a characterization of the equality cases of Proposition~\ref{prop:analog-of-alexandrov-fenchel}. In the expansion of Equation~\ref{eqn:yesyesyes} for the case $m = n$, by substituting $v_i = e_i$ for all $1 \leq i \leq n$ we get the equation
	\[
		f(x_1, \ldots, x_n) = d! \sum_{\alpha \in \Delta_n^d} \frac{F_f(e_1 [\alpha_1], \ldots, e_n [\alpha_n])}{\alpha!} x^\alpha.
	\]
	Thus a characterization for the Lorentzian analog of a characterization for the extremals of the Alexandrov-Fenchel inequality (as given in \cite{shenfeld2022extremals}) would be useful in characterizing equality in the inequalities given in Proposition~\ref{proposition-log-concavity-property-of-lorentzian-polynomial}. The characterization would involve checking when the vectors $e_1, \ldots, e_n$ satisfy the equaliy conditions. We plan to pursue this direction in future work.
\end{remark}

\chapter{Log-concavity Results for Posets and Matroids} \label{log-concavity-results}

In this chapter, we discuss proofs of several log-concavity results using mixed discriminants, mixed volumes, and Lorentzian polynomials. In the first section, we write about Stanley's poset inequality where he proved that a sequence which enumerates linear extensions based on where it sends a fixed element is log-concave. We also sketch the proof of the combinatorial characterization of Stanley's inequality given in \cite{shenfeld2022extremals}. We then use the same proof strategy to completely characterize the extremals of the Kahn-Saks inequality. This was done in joint work with Ramon van Handel, Xinmeng Zeng, and the author. In the final section, we study another inequality of Stanley about the log-concavity of a basis counting sequence for regular matroids. By combining perspectives from both mixed discriminants and mixed volumes, we characterize the equality cases of a simplified version of Stanley's matroid inequality. We then generalize Stanley's matroid inequality to all matroids using the technology of Lorentzian polynomials. 

\section{Stanley's Poset Inequality} \label{stanley-poset-inequality}

Let $(P, \leq)$ be a finite poset on $n$ elements and let $x \in P$ be a distinguished element in the poset. For every $k \in [n]$, let $N_k$ be the number of linear extensions of $P$ which send $x$ to the index $k$. Then we can consider properties of the sequence $N_1, \ldots, N_n$. Intuitively, one would expect that as a function in $[n]$, the sequence should be roughly unimodal. The fact that the sequence is unimodal is originally a conjecture by Ronald Rivest in his paper \cite{stanley-poset-inequality-origin}. Rivest proves his conjecture in the case where $P$ can be covered by two linear orders. In \cite{STANLEY}, Richard Stanley proves the full conjecture by proving the stronger claim that the sequence is log-concave.

\begin{thm}[Stanley's Poset Inequality, Theorem 3.1 in \cite{STANLEY}] \label{stanley-poset-inequality-general-case}
	Let $x_1 < \ldots < x_k$ be a fixed chain in $P$. If $1 \leq i_1 < \ldots < i_k \leq n$, then define $N(i_1, \ldots, i_k)$ to be the number of linear extensions $\sigma : P \to [n]$ satisfying $\sigma (x_j) = i_j$ for $1 \leq j \leq k$. Suppose that $1 \leq j \leq k$ and $i_{j-1} + 1 < i_j < i_{j+1} - 1$, where $i_0 = 0$ and $i_{k+1} = n+1$. Then 
	\[
		N(i_1, \ldots, i_k)^2 \geq N(i_1, \ldots, i_{j-1}, i_j - 1, i_{j+1}, \ldots, i_k) N(i_1, \ldots, i_{j-1}, i_j + 1, i_{j+1}, \ldots, i_k).
	\]
\end{thm}

Rivest's conjecture follows from Theorem~\ref{stanley-poset-inequality-general-case} in the case $k = 1$. We write the statement of this simplified inequality in Corollary~\ref{stanley-inequality-poset-simple} and call it Stanley's Simple Poset Inequality.

\begin{cor}[Stanley's Simple Poset Inequality] \label{stanley-inequality-poset-simple}
	The sequence $N_1, \ldots, N_n$ is log-concave. That is, we have $N_k^2 \geq N_{k-1}N_{k+1}$ for all $k \in \{2, \ldots, n-1\}.$
\end{cor}

We will present the proof to Corollary~\ref{stanley-inequality-poset-simple} rather than Theorem~\ref{stanley-poset-inequality-general-case} since they are essentially the same up to technical computations.

\begin{proof}[Proof of Corollary~\ref{stanley-inequality-poset-simple}]
	Define the polytopes $K, L \subseteq \RR^{n}$ by 
	\begin{align*}
		K & := \{t \in \mcO_P : t_\omega = 1 \text{ if } \omega \geq x\} \\
		L & := \{t \in \mcO_P : t_\omega = 0 \text{ if } \omega \leq x\}.
	\end{align*}
	For any $\lambda \in [0, 1]$, we have that 
	\[
		\Vol_{n-1} \left ( (1 - \lambda) K + \lambda L \right ) = \sum_{l \in e(P)}  \Vol_{n-1} \left ( \Delta_l \cap \left((1-\lambda) K + \lambda L\right) \right ).
	\]
	Even though $K, L$ are in $\RR^n$, they lie in parallel hyperplanes. This allows us to take $(n-1)$ dimensional volumes and mixed volumes. Now suppose that $l$ satisfies $l(x) = k$. Then, we have that $\Delta_l \cap \left((1-\lambda) K + \lambda L \right)$ is equal to 
	\[ 
		\{0 \leq t_{\pi^{-1}(1)} \leq \ldots \leq t_{\pi^{-1}(k-1)} \leq 1-\lambda\} \cap \{t_{\pi^{-1}(k)} = 1-\lambda\} \cap \{1 - \lambda \leq t_{\pi^{-1}(k+1)} \leq \ldots \leq t_{\pi^{-1}(n)} \leq 1\}.
	\]
	This is essentially the product of two simplices and has volume given by
	\[
		\Vol_{n-1} (\Delta_l \cap \left((1 - \lambda) K + \lambda L\right)) = \frac{(1-\lambda)^{k-1} \lambda^{n-k}}{(k-1)! (n-k)!}.
	\]
	Thus, we have the equation 
	\begin{align*}
		\Vol_{n-1}\left((1-\lambda) K + \lambda L\right) & = \sum_{k = 1}^n \binom{n-1}{k-1} \frac{N_k}{(n-1)!} (1-\lambda)^{k-1} \lambda^{n-k}.
	\end{align*}
	From Theorem~\ref{mixed-volume-polynomial-expansion-FINAL}, we have for $1 \leq i \leq n-1$ the equality
	\[
		\mathsf{V}_{n-1} (K[i-1], L[n-i]) = \frac{N_i}{(n-1)!} \implies N_i = (n-1)! \cdot \mathsf{V}_{n-1} (K[i-1], L[n-i]).
	\]
	The log-concavity of the sequence $N_1, N_2, \ldots, N_n$ follows from Theorem~\ref{AF-log-concavity}. 
\end{proof}

The proof of Corollary~\ref{stanley-inequality-poset-simple} illustrates one strategy for proving log-concavity results. To prove that a sequence is log-concave, it is enough to associate each element in the sequence to some mixed volume. Log-concavity then follows immediately from the Alexandrov-Fenchel inequality. In the sequel, we will discuss how to use the results about the extremals of the Alexandrov Fenchel inequality from \cite{shenfeld2022extremals} to give a combinatorial characterization of the equality cases of $N_k^2 = N_{k-1} N_{k+1}$ in Stanley's simple poset inequality. 

\subsection{Equality Case for the Simple Stanley Poset Inequality}

The equality cases for the Stanley poset inequality was computed in \cite{shenfeld2022extremals} using their results on the equality cases of the Alexandrov-Fenchel inequality for polytopes. From the proof of Corollary~\ref{stanley-inequality-poset-simple}, we have the equalities $N_k = (n-1)! \mathsf{V}_{n-1} (K[k], L[n-k-1])$ where the polytopes $K$ and $L$ were defined in the proof. Thus, to find the equality cases of $N_k^2 = N_{k-1} N_{k+1}$, it is enough to find the equality cases of 
\begin{equation} \label{stanley-equivalent-mixed-volume-inequality-interpretation-eqn}
	\mathsf{V}_{n-1}(K[k-1], L[n-k])^2 \geq \mathsf{V}_{n-1}(K[k], L[n-k-1]) \cdot \mathsf{V}_{n-1}(K[k-2], L[n-k+1])
\end{equation}
Our goal is to give combinatorial conditions to the poset $P$ so that the corresponding polytopes $K$ and $L$ satisfy equality in Equation~\ref{stanley-equivalent-mixed-volume-inequality-interpretation-eqn}. We sketch the proof given in Section 15 in \cite{shenfeld2022extremals} to give an idea on how the argument works. The same type of argument will be used to charaterize the extremals of the Kahn-Saks inequality. The combinatorial characterization given in \cite{shenfeld2022extremals} is compiled in Theorem~\ref{combinatorially-characterization-of-stanley-poset-simple}. 

\begin{thm}[Theorem 15.3 in \cite{shenfeld2022extremals}] \label{combinatorially-characterization-of-stanley-poset-simple}
	Let $i \in \{2, \ldots, n-1\}$ be such that $N_i > 0$. Then the following are equivalent:
	\begin{enumerate}[label = (\alph*)]
		\item $N_i^2 = N_{i-1} N_{i+1}$. 
		\item $N_i = N_{i-1} = N_{i+1}$. 
		\item Every linear extension $\sigma : P \to [n]$ with $\sigma(x) = i$ assigns ranks $i-1$ and $i+1$ to elements of $P$ that are incomparable to $x$. 
		\item $|P_{< y}| > i$ for all $y \in P_{> x}$, and $|P_{> y}| > n-i+1$ for all $y \in P_{< x}$. 
	\end{enumerate}
\end{thm}

In Theorem~\ref{combinatorially-characterization-of-stanley-poset-simple}, it is not difficult to prove that $(d)\implies(c)\implies(b)\implies(a)$. The main difficulty lies in proving that equality and positivity in $N_i^2 = N_{i-1} N_{i+1}$ implies the combinatorial conditions in (d). From here, let us assume that $N_i^2 = N_{i-1} N_{i+1}$ and $N_i > 0$. From Lemma~\ref{positivity-of-mixed-volumes}, we get necessary and sufficient conditions to guarantee $N_i > 0$. 

\begin{lem} [Lemma 15.2 in \cite{shenfeld2022extremals}] \label{lem:positivity-of-stanley-sequence-posets}
	For any $i \in [n]$, we have that $N_i = 0$ if and only if $|P_{< x}| > i-1$ or $|P_{> x}| > n-i$. 
\end{lem}

\begin{proof}
	Recall that $N_i = \mathsf{V}_{n-1} (\underbrace{K, \ldots, K}_{i-1 \text{ times}}, \underbrace{L, \ldots, L}_{n-i \text{ times}})$. From Lemma~\ref{positivity-of-mixed-volumes}, we know that $N_i > 0$ if and only if $\dim K \geq i-1$, $\dim L \geq n-i$, and $\dim (K+L) \geq n-1$. From the definition of $K$ and $L$, we can compute
	\begin{align*}
		\dim K & = n-1-|P_{> x}| \\
		\dim L & = n-1 - |P_{< x}| \\
		\dim (K+L) & = n-1. 
	\end{align*} 
	To see the details behind this computation, see Lemma 15.7 in \cite{shenfeld2022extremals}. Thus, $N_i = 0$ if and only if $|P_{> x}| > n-i$ and $|P_{<x}| > i-1$. 
\end{proof}

We get inequalities on the dimensions of $\dim K$ and $\dim L$ from the assumption $N_i > 0$. However, since $N_i^2 = N_{i-1}N_{i+1}$, we also have $N_{i-1}, N_{i+1} > 0$. This gives slightly stronger inequalities from Lemma~\ref{lem:positivity-of-stanley-sequence-posets} applied to $N_{i-1}$ and $N_{i+1}$. Specifically, we get the inequalities
\begin{align*}
	|P_{< x}| \leq i-2, \quad \text{and } \quad |P_{>x}| \leq n-i-1. 
\end{align*}
With these inequalities and Theorem~\ref{AF-equality-polytopes-theorem} we get a specialized version of Corollary 2.16 in \cite{shenfeld2022extremals}.

\begin{lem} [Lemma 15.8 in \cite{shenfeld2022extremals}] \label{stanley-lemma-to-extract-combinatorial-information}
	Let $i \in \{2, \ldots, n-1\}$ be such that $N_i > 0$ and $N_i^2 = N_{i+1} N_{i-1}$. Then $|P_{<x}| \leq i-2$, $|P_{>x}| \leq n-i-1$, and there exist $a > 0$ and $v \in \RR^{n-1}$ so that 
	\[
		h_K(x) = h_{aL + v}(x) \text{ for all } x \in \supp S_{B, K[i-2], L[n-i-1]}.
	\]

\end{lem}

In the beginning of our analysis, the convex bodies $K$ and $L$ were contained in parallel hyperplanes of $\RR^{n-1}$. When studying equality via Lemma~\ref{stanley-lemma-to-extract-combinatorial-information}, we project these bodies to the same copy of $\RR^{n-1}$. The vector $v \in \RR^{n-1}$ will also lie in the same hyperplane as our convex bodies $K$ and $L$. Lemma~\ref{stanley-lemma-to-extract-combinatorial-information} will be the main tool to extract combinatorial information about $P$ from the equality in the Alexandrov-Fenchel inequality. Recall in Definition~\ref{extreme-normal-directions}, we are giving a characterization for vectors in the support of the mixed area measure. By specializing this result in the Stanley case, we get the result in Lemma~\ref{support-vector-stanley-characterization}. 

\begin{lem} \label{support-vector-stanley-characterization}
	Let $u \in \RR^{n-1}$ be a unit vector. Then the following are equivalent. 
	\begin{enumerate}[label = (\alph*)]
		\item $u \in \supp S_{B, K[i-2], L[n-i-1]}$.

		\item $\dim F_K(u) \geq i-2$, $\dim F_L(u) \geq n-i-1$, and $\dim F_{K+L}(u) \geq n-3$. 
	\end{enumerate}
\end{lem}

From here, the argument will involve finding suitable vectors $u \in \mathbb{S}^{n-2} \subseteq \RR^{n-1}$ and applying Lemma~\ref{stanley-lemma-to-extract-combinatorial-information}. The suitability of the vector $u \in \mathbb{S}^{n-2}$ will depend on the corresponding inequalities in Lemma~\ref{support-vector-stanley-characterization}. When we find a ``suitable" vector $u \in \mathbb{S}^{n-2}$, one of two things generally occur. In the first case, the vector $u$ will unequivocally satisfy the dimension inequalities in Lemma~\ref{support-vector-stanley-characterization} and from Lemma~\ref{stanley-lemma-to-extract-combinatorial-information} we get the relation $h_K(u) = a h_L(u) + \langle v, u \rangle$. In this case, the identity we get usually tells us something about $v$ or $a$. In the second case, it is unclear whether or not the vector $u$ will satisfy the dimension inequalities in Lemma~\ref{support-vector-stanley-characterization}. The dimension inequalities will usually involve some statistics about the combinatorial structure of our object. For example, it might involve the number of elements greater than another, or the number of elements between two elements in a poset. In this case, we know that if the dimension inequalities in Lemma~\ref{support-vector-stanley-characterization} are satisfied then we get the identity in Lemma~\ref{stanley-lemma-to-extract-combinatorial-information}. If we are lucky, the resulting identity we get will contradict some information that we know is true from the vectors in the first case. This will imply that at least one of the dimension inequalities are incorrect, giving a combinatorial condition that must be satisfied for equality. 

In the Stanley case, vectors in the first case are vectors of the form $-e_{\omega}$ where $\omega$ is a minimal element, $e_\omega$ where $\omega$ is a maximal element, and some vectors of the form $e_{\omega_1,\omega_2} := \frac{e_{\omega_1} - e_{\omega_2}}{\sqrt{2}}$ where $\omega_1, \omega_2 \in P$ are elements in the poset such that $\omega_1 \lessdot \omega_2$. From the first two vectors, we get that $v_\omega = 1-a$ for any maximal element $\omega$, the second vector gives $v_\omega = 0$ for any minimal element $\omega$. The third type of vector tells us that in some situations where $\omega_1 \lessdot \omega_2$ we have $v_{\omega_1} = v_{\omega_2}$. From the proof of Corollary 15.11 of \cite{shenfeld2022extremals}, it is possible to create a chain from a minimal element to a maximal element such as the adjacent relations are covering relations where the corresponding vector $e_{\omega_1, \omega_2}$ is in the support of the mixed area measure. This implies that all of the coordinates of $v$ with respect to this chain are equal to each other. In particular, the coordinate at the minimal element will be equal to the coordinate at the maximal element (that is, $a = 1$). This result is interesting in its own right. We have just proved that if $N_i^2 = N_{i-1} N_{i+1}$ and $N_i > 0$, then $N_{i+1} = N_i = N_{i-1}$.

Vectors in the second case consist of vectors of the form $-e_\omega$ where $\omega$ is a minimal element in $P_{> x}$ and also vectors of the form $e_\omega$ where $\omega$ is a maximal element in $P_{< x}$. From the information that $a = 1$, it is not difficult to show that the equality that we would get as a result of these vectors being in the support of the mixed area measure give contradictions. Thus, some of the dimension inequalities corresponding to these vectors must be incorrect. This is enough to give the combinatorial characterization in Theorem~\ref{combinatorially-characterization-of-stanley-poset-simple}. During the present discussion, we did not show any of the computations for the sake of time and space. To see the computations in full detail and rigour, we refer the reader to the original source \cite{shenfeld2022extremals}. 

\section{Extremals of the Kahn-Saks Inequality} \label{kahn-saks-inequality}

This section covers joint work by Ramon van Handel, Xinmeng Zeng, and the author. We consider a slight generalization of the simple version of the Stanley poset inequality called the \textbf{Kahn-Saks Inequality}. We also cover aspects of the proof of our characterization of the equality cases. In the paper \cite{balancing-poset-extensions}, Jeff Kahn and Michael Saks prove that any finite poset contains a pair of elements $x$ and $y$ such that the proportion of linear extensions of $P$ in which $x$ lies below $y$ is between $\frac{3}{11}$ and $\frac{8}{11}$. This fact has consequences in theoretic computer science and sorting algorithms. The interested reader should consult the original source \cite{balancing-poset-extensions} for these applications as they are tangential to our thesis. One ingredient in their proof is a log-concavity inequality for a linear extension counting sequence. Let $P$ be a finite poset and fix elements $x, y \in P$ with $x \leq y$. Let $N_k$ be the number of linear extensions $f$ satisfying $f(y) - f(x) = k$ for all $1 \leq k \leq n-1$. The Kahn-Saks inequality written in Theorem~\ref{kahn-saks} states that the sequence $N_1, \ldots, N_{n-1}$ is log-concave. 

\begin{thm}[Kahn-Saks Inequality, Theorem 2.5 in \cite{balancing-poset-extensions}] \label{kahn-saks}
	For all $k \in \{2, \ldots, n-2\}$, we have $N_k^2 \geq N_{k-1} N_{k+1}$. 
\end{thm}

In the case where $x$ is an 0 element in $P$, Theorem~\ref{kahn-saks} reduces to Corollary~\ref{stanley-inequality-poset-simple} since $x$ will be forced to be the lowest element in any linear extension. The proof of Theorem~\ref{kahn-saks} is similar to the proof of Stanley's inquality. To prove log-concavity, we will associate the sequence $N_k$ to a sequence of mixed volumes of suitable polyopes. The polytopes that we use will be cross-sections of the order polytope $\mcO_P$. For all $\lambda \in \RR$, we define the polytope $K_\lambda$ as 
\[
	K_\lambda := \{t \in \mcO_P : t_y - t_x = \lambda\}.
\]
Each of these cross-sections can be written as a convex combination of $K_0$ and $K_1$. We defer the proof of $(1-\lambda)K_0 + \lambda K_1 = K_\lambda$ to Lemma~\ref{cross-section-mixed-lemma-computation-that-wasnt-in-kahn-saks}. From \cite{balancing-poset-extensions}, we know that $N_k =(n-1)! \mathsf{V}_{n-1}(K_0 [n-k], K_1[k-1])$. The mixed volume is well-defined since $K_0$ and $K_1$ lie in parallel hyperplanes. We give the full computation of this fact in Lemma~\ref{computation-of-mixed-volume-in-kahn-saks-case} in the appendix. The Kahn-Saks inequality then follows immediately from the Alexandrov-Fenchel inequality. In the next section, we begin our combinatorial characterization of the extremals of the Kahn-Saks inequality. Our main results are Theorem~\ref{kahn-saks-thm-1}, Theorem~\ref{kahn-saks-thm-2}, and Theorem~\ref{kahn-saks-thm-3}. 

\subsection{Simplifications and Special Regions}

Let $(P, \leq)$ be our poset with distinguished elements $x, y \in P$ satisfying $x \not \geq y$. We can always add a $0$ element and $1$ element to our poset. In any linear extension, these two elements will be forced to be placed in the beginning and end of the linear extension. Hence, adding these two elements to the poset will not change the values of the sequence $\{N_k\}$. If $x || y$, Lemma~\ref{add-new-relation} and Lemma~\ref{doesn't-change} in the appendix prove that for the poset generated by our original relations and the new relation $x \leq y$, our sequence $N_k$ will remain the same. Thus, we will assume that our poset $P$ has a $0$ element, has a $1$ elements, and satisfies $x \leq y$. In our poset, we will define the several special regions which each need to be handled separately in our analysis.
\begin{align*}
		\END_x & := \{\omega \in P : \omega < x\} = P_{< x} \\
		\END_y & := \{\omega \in P : \omega > y\} = P_{> y} \\
		\MID & := \{\omega \in P : x < \omega < y\} = P_{x < \cdot < y} \\
		\MID_x & := \{\omega \in P : x < \omega \text{ and } \omega || y\} = P_{\cdot > x, \cdot || y} \\
		\MID_y & := \{\omega \in P : \omega < y \text{ and } \omega || x\} = P_{\cdot < y, \cdot || x} \\
\end{align*}

These regions are disjoint, but they do not account for all elements in $P$. For a full partition of our poset $P$ we have 
\[
	P = \END_x \sqcup \END_y \sqcup \MID_x \sqcup \MID_y \sqcup \MID \sqcup \{x, y\} \sqcup P_{\not\sim x, y}
\]
where the subset $P_{\sim x, y}$ consists of the elements in $P$ which are not comparable to either $x$ or $y$. 

\subsection{Combinatorial characterization of the extremals}

Some of our main results about the extremals of the Kahn-Saks inequality were conjectured by Swee Hong Chan, Igor Pak, and Greta Panova in their paper \cite{chan2022extensions}. In their paper, they define the notions of the midway and dual-midway properties recorded in Definition~\ref{midway-defn}.
\begin{defn} \label{midway-defn}
	We say that $(x, y)$ satisfies the $k$-\textbf{midway property} if 
	\begin{itemize}
		\item $|P_{< z}| + |P_{>y}| > n-k$ for every $z \in P$ such that $x < z$ and $z \not > y$, 
		\item $|P_{z < \cdot < y}| > k$ for every $z < x$. 
	\end{itemize}
	Similarly, we say that $(x, y)$ satisfies the \textbf{dual $k$-midway property} if:
	\begin{itemize}
		\item $|P_{> z}| + |P_{<x}| > n-k$ for every $z \in P$ such that $z < y$ and $z \not < x$, and 
		\item $|P_{x < \cdot < z}| > k$ for every $z > y$. 
	\end{itemize}
	For any $z \in P$, we say that $z$ satisfies the $k$-midway property or satisfies the dual $k$-midway property if the relevant inequality for $z$ in the corresponding property is satisfied. For example, if $z < x$, we would say that $z$ satisfies the $k$-midway property if $|P_{z < \cdot < y}| > k$. 
\end{defn}
In Conjecture 8.7 in \cite{chan2022extensions}, Chan-Pak-Panova conjecture that when $N_k > 0$, the equality case $N_{k+1} = N_k = N_{k-1}$ occurs if and only if the midway property holds or the dual midway property holds. Using purely combinatorial methods, they prove this conjecture for width two posets where $x$ and $y$ are elements in the same chain in the two chain decomposition. By appealing to the extremals of the Alexandrov-Fenchel inequality, we are able to prove this conjecture in its full generality. We are also able to find all possible values of $a$ for which $N_{i+1} = aN_i = a^2 N_{i-1}$, and give combinatorial characterizations for the equality cases in all possible values of $a$. We now state our main results. 

\begin{thm} \label{kahn-saks-thm-1}
	If $N_k > 0$ and $N_k^2 = N_{k-1} N_{k+1}$, then we either have $N_{k+1} = N_k = N_{k-1}$ or $N_{k+1} = 2N_k = 4N_{k-1}$. 
\end{thm}

\begin{thm} \label{kahn-saks-thm-2}
	If $N_k > 0$, then the following are equivalent:
	\begin{enumerate}[label = (\alph*)]
		\item $N_{k-1} = N_k = N_{k+1}$.
		\item $(x, y)$ satisfies the $k$-midway property or the dual $k$-midway property. 
	\end{enumerate}
\end{thm}

\begin{thm} \label{kahn-saks-thm-3}
	If $N_k > 0$, then $N_{k+1} = 2N_k = 4N_{k-1}$ if and only if the following properties hold: 
	\begin{enumerate}[label = (\roman*)]
		\item $k$-midway holds for $\END_x$ and dual $k$-midway holds for $\END_y$.  
		\item Every $z \in P$ is comparable to $x$ and $y$. 
		\item $\MID$ is empty. 
		\item For every $z \in \MID_y$ and $z' \in \MID_x$ with $z < z'$, we have that $|P_{z < \cdot < y}| + |P_{x < \cdot < z'}| \geq k-1$. 
	\end{enumerate}
\end{thm}

\begin{remark}
	Theorem~\ref{kahn-saks-thm-2} resolves Conjecture 8.7 in \cite{chan2022extensions}. In their paper, Chan-Pak-Panova prove the important implication $(b) \implies (a)$. Since this result is central for the full characterization, we record it in Proposition~\ref{almost-theorem}.
\end{remark}

\begin{prop} [Theorem 8.9, Proposition 8.8 in \cite{chan2022extensions}]\label{almost-theorem}
	If $(x, y)$ satisfies the $k$-midway property or the dual $k$-midway property, then $N_{k-1} = N_k = N_{k+1}$. 
\end{prop}

The proof strategy of these results will be similar to characterization of the equality cases of the Stanley inequality in \cite{shenfeld2022extremals}. In this discussion, let us assume that $N_k^2 = N_{k-1} N_{k+1}$ and $N_k > 0$. Recall in the proof of Theorem~\ref{kahn-saks}, we showed the identity 
\[
	N_k = (n-1)! \mathsf{V}_{n-1} \left ( \underbrace{K_0, \ldots, K_0}_{n-k \text{ times}}, \underbrace{K_1, \ldots, K_1}_{k-1 \text{ times}} \right ).
\]
Then, we have equality in $N_k^2 = N_{k-1} N_{k+1}$ if and only if we have equality in the corresponding Alexadrov-Fenchel inequality:
\[
	\mathsf{V}_{n-1}(K_0[n-k], K_1[k-1])^2 \geq \mathsf{V}_{n-1} (K_0[n-k-1], K_1[k]) \cdot \mathsf{V}_{n-1} (K_0[n-k+1], K_1[k-2]).
\]
Using Lemma~\ref{positivity-of-mixed-volumes}, we have the characterization of positivity given in Proposition~\ref{prop-positivity-for-kahn-saks}. We omit the proof since we have prove a similar positivity result in the Stanley case. 

\begin{prop} \label{prop-positivity-for-kahn-saks}
	The following are equivalent. 
	\begin{enumerate}[label = (\alph*)]
		\item $N_k = 0$, 
		\item $\dim K_0 < n-k$ or $\dim K_1 < k-1$, 
		\item $|P_{< x}| + |P_{> y}| > n-k-1$ or $|P_{x < \cdot < y}| > k-1$. 
	\end{enumerate}
\end{prop}

Since $N_k^2 = N_{k-1} N_{k+1}$ and $N_k > 0$, we also have that $N_{k-1} > 0$ and $N_{k+1} > 0$. From Proposition~\ref{prop-positivity-for-kahn-saks} this gives us stronger inequalities on $\dim K_0$ and $\dim K_1$. From Proposition~\ref{affine-hulls}, we know that
\begin{align*}
	\dim K_0 & = n - |P_{x < \cdot < y}| - 1 \\
	\dim K_1 & = n - |P_{<x}| - |P_{> y}| - 2 \\
	\dim (K_0 + K_1) & = n-1. 
\end{align*}
Let $V = (e_y - e_x)^\perp \cong \RR^{n-1}$ be the orthogonal complement of the vector $e_y - e_x$. The polytopes $K_0$ and $K_1$ are contained in parallel copies of $V$. We can specialize Theorem~\ref{AF-equality-polytopes-theorem} to the Kahn-Saks case and prove Theorem~\ref{kahn-saks-main-workhorse-theorem}. This will be the main workhorse theorem to prove the combinatorial characterization for the equality case of the Kahn-Saks inequality.

\begin{thm} \label{kahn-saks-main-workhorse-theorem}
	Let $k \in \{2, \ldots, n-2\}$ such that $N_k > 0$ and $N_k^2 = N_{k-1} N_{k+1}$. Then 
	\[
		1 + |P_{x < \cdot < y}| < k < n - |P_{< x}| - |P_{> y}| - 1
	\]
	and there exists a positive scalar $a > 0$ and $v \in V$ such that $h_{K_0}(u) = h_{aK_1 + v}(u)$ for all $u \in \supp S_{B, K_0[n-k-1], K_1[k-2]}$.
\end{thm}

\begin{proof}
	Since $N_k > 0$, we know that $N_{k-1}, N_{k+1} > 0$. Proposition~\ref{prop-positivity-for-kahn-saks} applied to $N_{k-1}$ and $N_{k+1}$ implies that $\dim K_0 \geq n-k+1$ and $\dim K_1 \geq k$. Thus, $\mathcal{P} := (K_0[n-k-1], K_1[k-2])$ is supercritical. The theorem then follows from Theorem~\ref{AF-equality-polytopes-theorem}. 
\end{proof}

To slightly simplify our notation, we let $\mu := S_{B, K_0[n-k-1], K_1[k-2]}$ be the mixed area measure on the copy of $\mathbb{S}^{n-2}$ in $V$. The next lemma is a simple criterion for when a vector lies in the support of the mixed area measure. 

\begin{lem} \label{kahn-saks-conditions-for-being-in-support-area-measure-lemma}
	For any vector $u \in V$, we have that $u \in \supp \mu$ if and only if 
	\begin{align*}
		\dim F(K_0, u) & \geq n-k-1 \\
		\dim F(K_1, u) & \geq k-2 \\
		\dim F(K_0 + F_1, u) & \geq n-3.
	\end{align*}
\end{lem}

\begin{proof}
	This is an application of Lemma 2.3 in \cite{shenfeld2022extremals}.
\end{proof}

\subsection{Transition Vectors}

In this section, we assume that the hypothesis of Theorem~\ref{kahn-saks-main-workhorse-theorem} is true. That is, we have $N_k > 0$ and $N_k^2 = N_{k-1} N_{k+1}$. We can label our poset $(P, \leq)$ as $P = \{z_1, \ldots, z_n\}$ where $x = z_{n-1}$ and $y = z_n$. Let $v = (v_1, \ldots, v_n)$ where $v_i$ is the coordinate of $v$ with respect to the coordinate indexed by $z_i$. When it is more convenient to label the coordinate by the actual poset element itself, we use the notation $v_{z_i}$. In Definition~\ref{transition-vectors}, we define the notion of transition vectors. These are vectors in $\supp \mu$ which, from Corollary~\ref{transition-vector-implies-equality}, allows us to equate the two coordinates of $v$ of the corresponding pair of poset elements.

\begin{defn} \label{transition-vectors}
	For $z_i, z_j \in P \backslash \{x, y\}$, we define the vector 
	\[
		e_{ij} := \frac{1}{\sqrt{2}} (e_i - e_j) \in V.
	\]
	We call $e_{ij}$ a \textbf{transition vector} if $z_i \lessdot z_j$ and $e_{ij} \in \supp \mu$. We also use the notation $e_{z_i, z_j}$ in some cases. 
\end{defn}
Transition vectors will imply that the coordinates of the corresponding pair of poset elements are equal because the support function values of transition vectors will always be $0$. Indeed, for $z_i \lessdot z_j$, we can compute 
\begin{align*}
    h_{K_0}(e_{ij}) & = \frac{1}{\sqrt{2}} \sup_{t \in K_0} \langle t, e_i - e_j \rangle = \frac{1}{\sqrt{2}} \sup_{t \in K_0} t_i - t_j \leq 0 \\
    h_{K_1} (e_{ij}) & = \frac{1}{\sqrt{2}} \sup_{t \in K_1} \langle t, e_i - e_j \rangle  = \frac{1}{\sqrt{2}} \sup_{t \in K_1} t_i - t_j \leq 0
\end{align*}
where the last inequalities follow from the fact that $z_i\lessdot z_j$ and the parameter $t$ lies in the order polytope $\mathcal{O}_P$. Since we have $(0, \ldots, 0) \in K_0$ we have $h_{K_0}(e_{ij}) = 0$. To prove that $h_{K_1}(e_{ij}) = 0$, note that $z_i \lessdot z_j$ implies that it cannot be the case that $z_i < x$ and $z_j > y$. Since $\sum_{z_i \geq y} e_i , \sum_{z_i \not \leq x} e_i \in K_1$, we have $h_{K_1}(e_{ij}) = 0$. The equality $v_i = v_j$ then follows from Theorem~\ref{kahn-saks-main-workhorse-theorem}.

\begin{cor} \label{transition-vector-implies-equality}
    Suppose that the hypothesis of Theorem~\ref{kahn-saks-main-workhorse-theorem} is true and let $v \in V$ be the vector in the theorem. If $e_{ij}$ is a support vector, then $v_i = v_j$.  
\end{cor}

\begin{proof}
    From Theorem~\ref{kahn-saks-main-workhorse-theorem}, we have that $h_{K_0}(e_{ij}) = h_{aK_1 + v}(e_{ij}) = a h_{K_1} (e_{ij}) + \langle v, e_{ij} \rangle$. Since $h_{K_0}(e_{ij}) = h_{K_1}(e_{ij}) = 0$ and $\langle v, e_{ij} \rangle = v_i - v_j$, we have $v_i = v_j$. This proves the lemma. 
\end{proof}
In Lemma~\ref{some-transition-vectors}, we compile a list of transition vectors. 

\begin{lem} \label{some-transition-vectors}
    Suppose that the hypothesis in Theorem~\ref{kahn-saks-main-workhorse-theorem} is true. Then, we have the following transition vectors: 
    \begin{enumerate}[label = (\alph*)]
        \item Let $R = \END_x$ or $R = \END_y$. If $z_i, z_j \in R$ satisfy $z_i \lessdot z_j$, then $e_{ij}$ is a transition vector.

        \item Let $R = \MID_x$ or $R = \MID_y$. If $z_i, z_j \in R$ satisfy $z_i \lessdot z_j$, then $e_{ij}$ is a transition vector. 

        \item If $z_i \in \MID_x$ and $z_j \in \END_y$ such that $z_i \lessdot z_j$ and there does not exist $z_l \in \MID_x$ with $z_i \lessdot z_l$, then $e_{ij}$ is a transition vector. 

        \item If $z_j \in \MID_y$ and $z_i \in \END_x$ such that $z_i \lessdot z_j$ and there does not exist $z_l \in \MID_y$ with $z_l \lessdot z_j$, then $e_{ij}$ is a transition vector. 

        \item If $z_i, z_j \in \MID$ such that $z_i \lessdot z_j$, then $e_{ij}$ is a transition vector. 
    \end{enumerate}
\end{lem}

\begin{proof}
	 We only prove the result when $R = \END_x$ since the proofs of the other regions are relatively similar. A full proof of the result will be given in an upcoming paper with Ramon van Handel and Xinmeng Zeng. Before computing the dimensions of these faces, we first prove that there exists a linear extension $f : P \to [n]$ with $f(z_j) - f(z_i) = 1$ and $f(y) - f(x) = 1 + |P_{x < \cdot < y}|$. From Proposition~\ref{extension-narrow-exists}, there is a linear extension $\tilde{f} : P \to [n]$ with $\tilde{f}(y) - \tilde{f}(x) = |P_{x < \cdot < y}| + 1$. Both $z_i$ and $z_j$ will be located to the left of $x$ in the linear extension $\tilde{f}$. From Proposition~\ref{covering-modification}, we can modify $\tilde{f}$ by only changing the elements between $z_i$ and $z_j$ in the linear extension $\tilde{f}$ to a linear extension $f$ satisfying $f(z_j) - f(z_i) = 1$. Clearly, we have the following set inclusion
    \[
        F_{K_0}(e_{ij}) \supseteq \Delta_f \cap \{t_i = t_j, t_x = t_y\}.
    \]
    Taking the affine span of both sets, we have 
    \begin{align*}
        \aff F_{K_0}(e_{ij}) & \supseteq \aff \Delta_f \cap \{t_i = t_j, t_x = t_y\} \\
        & = \RR \left [ \sum_{\omega \in P_{x \leq \cdot \leq y}} e_\omega \right ] \oplus \RR [ e_i + e_j] \oplus \bigoplus_{\omega \in P \backslash (P_{x \leq \cdot \leq y} \cup \{z_i, z_j\})} \RR[e_\omega]. 
    \end{align*}
    This gives us the following bounds for the dimensions of $F_{K_0}(e_{ij})$ and $F_{K_1}(e_{ij})$:
    \begin{align*}
        \dim F_{K_0}(e_{ij}) & \geq n - |P_{x < \cdot < y}| - 2 = \dim K_0 - 1\\
        \dim F_{K_1}(e_{ij}) & = \dim K_1. 
    \end{align*}
    From Lemma~\ref{linear-algebraic-lemma}, we have $\dim F_{K_0 + K_1}(e_{ij}) \geq \dim (K_1 + K_0) - 1 = n-2$. From the bounds in Theorem~\ref{kahn-saks-main-workhorse-theorem}, we have 
    \begin{align*}
        \dim F_{K_0}(e_{ij}) & = n - |P_{x < \cdot < y}| - 2 \geq n - k \\
        \dim F_{K_1}(e_{ij}) & = n - |P_{< x}| - |P_{> y}| - 2 \geq k. 
    \end{align*}
    From Lemma~\ref{kahn-saks-conditions-for-being-in-support-area-measure-lemma}, we know that $e_{ij} \in \supp \mu$. This proves that $e_{ij}$ is a transition vector.
\end{proof}

For any result about support vectors, the calculations needed to determine whether or not certain vectors are support vectors look very similar to the calculation performed in our proof of Lemma~\ref{some-transition-vectors}. The strategy is to find ``extremal linear extensions'' where the intersection between the corresponding simplex and the faces of our polytopes has large dimension. To avoid reptitiveness and bogging the thesis with technical details, we omit a large portion of the support vector calculations. 

\begin{lem} \label{value-of-minima-maxima}
    Suppose that the hypothesis of Theorem~\ref{kahn-saks-main-workhorse-theorem} holds. Let $z_i \in P \backslash \{x, y\}$ be an element of the poset. If $z_i$ is minimal, then $v_i = 0$. If $z_i$ is maximal, then $v_i = 1-a$. 
\end{lem}

\begin{proof}
	If $z_i \in P$ is a minimal element, then we can show that $-e_i$ is in the support of the mixed area measure. If $z_i \in P$ is a maximal element, then we can show that $e_i$ is in the support of the mixed area measure. These facts will imply via Theorem~\ref{kahn-saks-main-workhorse-theorem} that $v_i = 0$ when $z_i$ is minimal and $v_i = 1-a$ when $z_i$ is maximal. 
\end{proof}

Using Lemma~\ref{some-transition-vectors} and Lemma~\ref{value-of-minima-maxima}, we can compute some of the coordinates of the vector $v$ depending on the region containing the element poset corresponding to the coordinate.  

\begin{cor} \label{cor-coordinates-in-the-regions}
    Let $\omega \in P \backslash \{x, y\}$ be some element in our poset. 
    \begin{enumerate}[label = (\alph*)]
        \item If $\omega \in \END_x \cup \MID_y$, then $v_\omega = 0$;
        \item If $\omega \in \END_y \cup \MID_x$, then $v_\omega = 1-a$;
        \item If $\omega_1, \omega_2 \in \MID$ are comparable, then $v_{\omega_1} = v_{\omega_2}$.
    \end{enumerate}
\end{cor}

\begin{proof}
    Suppose that $\omega \in \END_x$. If $\omega$ is a minimal element, then we already know that $v_\omega = 0$ from Lemma~\ref{value-of-minima-maxima}. Suppose that $\omega$ is not a minimal element. Then there is a sequence of poset elements $\omega_1, \omega_2, \ldots, \omega_l$ for $l \geq 1$ such that $\omega_1 \lessdot \omega$, $\omega_{i+1} \lessdot \omega_i$ for all $i$, and $\omega_l$ is a minimal element. From Lemma~\ref{some-transition-vectors}(a), we know that $e_{\omega_1, \omega}$ and $e_{\omega_{i+1}, \omega_i}$ are transition vectors for all $i$. From Corollary~\ref{transition-vector-implies-equality} and Corollary~\ref{value-of-minima-maxima} applied to $\omega_l$, we have $v_\omega = v_{\omega_1} = \ldots = v_{\omega_l} = 0$. A similar proof will work when $\omega \in \END_y$, but instead we built a maximal chain to a maximal element. \\

    Now, suppose that $\omega \in \MID_x$. If $\omega$ is maximal, then we automatically know that $v_\omega = 1-a$. If $\omega$ is not maximal, we can build a chain $\omega \lessdot \omega_1 \lessdot \omega_2 \lessdot \ldots \lessdot \omega_l$ where $\omega_l$ is a maximal element by using a special procedure to guarentee that all adjacent elements correspond to transition vectors. To illustrate this procedure, suppose that we have picked $\omega \lessdot \omega_1 \lessdot \ldots \lessdot \omega_i$ up to some $i$. Then $\omega_i \in \END_y$ or $\omega_i \in \MID_x$. If $\omega_i \in \END_y$, then we can continue to a maximal element arbitrarily. Otherwise, if $\omega_i \in \MID_x$, then we pick $\omega_{i+1}$ to be an element of $\MID_x$ which covers $\omega_i$. If none exists, we then pick $\omega_{i+1}$ to be an element of $\END_y$ which covers $\omega_i$. If this doesn't exist again, then we know that $\omega_i$ is a maximal element. By our construction and Lemma~\ref{some-transition-vectors}, we know that the adjacent poset elements in the chain form transition vectors. This implies that $v_\omega = v_{\omega_1} = \ldots = v_{\omega_l} = 1-a$. A similar proof works for $\MID_y$ but instead we create the same type of chain to a minimal element. \\

    For the result on $\MID$, suppose that $\omega_1, \omega_2 \in \MID$ are comparable. Since any two comparable elements in $\MID$ can be connected by a chain completely contained in $\MID$, it suffices to prove the result when $\omega_1 \lessdot \omega_2$. In this case, from Lemma~\ref{some-transition-vectors}(e), we know that $v_{\omega_1, \omega_2}$ is a transition vector. Hence $v_{\omega_1} = v_{\omega_2}$. This suffices for the proof.  
\end{proof}

To conclude the section on transition vectors, we give conditions for when a vector of the form $e_{ij}$ is a transition vector from $\MID$ to $\MID_x$ or $\MID_y$. 

\begin{prop} \label{prop-actually-important-transition-vector}
	Let $z_i \in P \backslash \{x, y\}$ be a poset element satisfying $x < z_i < y$. 
	\begin{enumerate}[label = (\alph*)]
		\item If $z_j \in P$ satisfies $z_j \in \MID_y$ and $z_j \lessdot z_i$, then $e_{ji}$ is a transition vector if $|\MID| + |S| + 1 \leq k$ where we define the set $S$ to be 
		\[
			S = P_{z_i < \ldots < y} \backslash \MID = \{z : z_i < z < y, z \notin \MID\}.
		\]
		\item If $z_j \in P$ satisfies $z_j \in \MID_x$ and $z_j \gtrdot z_i$, then $e_{ij}$ is a transition vector if $|\MID| + |S| + 1 \leq k$ where we define the set $S$ to be
		\[
			S = P_{x < \cdot < z_i} \backslash \MID = \{z : x < z < z_i, z \notin \MID \}.
		\]
	\end{enumerate}
\end{prop}

\begin{proof}
	We only prove (a) as the proof of (b) is similar. Note that 
	\begin{align*}
		\aff F_K(e_{ji}) & = \bigoplus_{\omega \notin \MID \sqcup S \sqcup \{x, y, z_i\}} \RR[e_\omega] \oplus \RR \left [ \sum_{\omega \in \MID \sqcup S \sqcup \{x, y, z_i\}} e_\omega \right ] \\
		\aff F_L(e_{ji}) & = \bigoplus_{\omega \notin \{z_i, z_j\} \sqcup P_{\leq x} \sqcup P_{\geq y}} \RR[e_\omega] \oplus \RR[e_i + e_j] + \sum_{\omega \in P_{\geq y}} e_\omega \\
		\aff F_{K+L}(e_{ji}) & = \bigoplus_{\omega \notin \{z_i, z_j, x, y\}} \RR[e_\omega] \oplus \RR \left [ \sum_{\omega \in \MID \sqcup S \sqcup \{x, y, z_i\}} e_\omega \right ] \oplus \RR[e_i + e_j] \oplus \sum_{\omega \in P_{\geq y}} e_\omega.
	\end{align*}
	The only obstruction to the vector $e_{ji}$ being in the support is the dimension inequality corresponding to $\dim \aff F_K(e_{ji})$. Specifically, we need the inequality 
	\[
		n - |\MID| - |S| - 2 = \dim F_K(e_{ij}) \geq n - k - 1 \iff |\MID| + |S| + 1 \leq k.
	\]
	This suffices for the proof. 
\end{proof}

\subsection{More Vectors}

In this section, we introduce a new type of vector not found in the Stanley case that will allow us to extract the midway and dual-midway inequalities from our poset.

\begin{defn} \label{special-vectors}
    For $j \in \{1, \ldots, n-2\}$, define 
    \begin{align*}
        u_j^+ := \sqrt{\frac{2}{3}} \left ( e_j - \frac{1}{2}(e_x + e_y) \right ) \\
        u_j^- := \sqrt{\frac{2}{3}} \left ( \frac{1}{2} (e_x + e_y) - e_j \right )
    \end{align*}
    where $e_i := e_{z_i}$ is the basis vector corresponding to the poset element $z_i \in P$.
\end{defn}

We provide the following table which shows which vector we use in our analysis based on the location of each element in the poset. We also display the corresponding support values. 

\begin{center}
    \begin{tabular}{||c c c c c||} 
         \hline
         Region & Relation & Vector & $h_{K_0}(\cdot)$ & $h_{K_1}(\cdot)$ \\ [0.5ex] 
         \hline\hline
         $z_j \in \MID$, & $z_j \lessdot y$ & $u_j^+$ & $0$ & $\sqrt{\frac{1}{6}}$ \\
         \hline
         $z_j \in \MID$, & $z_j \gtrdot x$ & $u_j^-$ & $0$ & $\sqrt{\frac{1}{6}}$ \\
         \hline
         $z_j \in \MID_x$, & $z_j \gtrdot x$ & $u_j^-$ & $0$ & $\sqrt{\frac{1}{6}}$\\
         \hline 
         $z_j \in \MID_y$, & $z_j \lessdot y$ & $u_j^+$ & $0$ & $\sqrt{\frac{1}{6}}$ \\
         \hline
         $z_j \in \END_x$, & $z_j \lessdot x$ & $u_j^+$ & $0$ & $-\sqrt{\frac{1}{6}}$ \\
         \hline 
         $z_j \in \END_y$, & $z_j \gtrdot y$ & $u_j^-$ & $0$ & $-\sqrt{\frac{1}{6}}$ \\
         \hline 
    \end{tabular}
\end{center}

After finding the conditions for the vectors in the table to be in the support of the mixed area measure, we can prove Proposition~\ref{prop-conditions-for-special-vectors}. As discussed before, we will omit the proofs.

\begin{prop} \label{prop-conditions-for-special-vectors}
    Let $z_i \in P \backslash \{x, y\}$ be an element. Then 
    \begin{enumerate}[label = (\alph*)]
        \item If $z_i \in \MID$, $z_i \lessdot y$, then $u_i^+ \in \supp \mu$ if and only if $|P_{> z_i}| \leq n-k-|P_{< x}|$. 
        \item If $z_i \in \MID$, $z_i \gtrdot x$, then $u^-_i \in \supp \mu$ if and only if $|P_{<z_i}| \leq n-k - |P_{>y}|$.
        \item If $z_i \in \MID_x^{\text{min}}$, then $u^-_i \in \supp \mu$ if and only if $|P_{<z_i}| \leq n-k - |P_{>y}|$.
        \item If $z_i \in \END_x$, $z_i \lessdot x$, then $u^+_i \in \supp \mu$ if and only if $|P_{z_i < \cdot < y}| \leq k$. 
        \item If $z_i \in \MID_y^{\text{max}}$, then $u_i^+ \in \supp \mu$ if and only if $|P_{>z_i}| \leq n-k-|P_{< x}|$.
        \item If $z_i \in \END_y$, $z_i \gtrdot y$, then $u^-_i \in \supp \mu$ if and only if $|P_{x < \cdot < z_i}| \leq k$. 
    \end{enumerate}
\end{prop}

Using Theorem~\ref{kahn-saks-main-workhorse-theorem} and the table of support value computations, we can translate Proposition~\ref{prop-conditions-for-special-vectors} into information about $v$ and $a$. The result of this application of Theorem~\ref{kahn-saks-main-workhorse-theorem} is given in Proposition~\ref{prop-conditions-for-special-vectors-on-a-v}.

\begin{prop} \label{prop-conditions-for-special-vectors-on-a-v}
	Let $z_i \in P \backslash \{x, y\}$ be an element of our poset. Let $v_{xy} := \frac{1}{2} (v_x + v_y)$. 
	\begin{enumerate}[label = (\alph*)]
		\item If $z_i \in \MID$ and $z_i \lessdot y$, then $v_i \neq v_{xy} - a/2 \implies |P_{> z_i}| > n-k - |P_{< x}|$. 
		\item If $z_i \in \MID$ and $z_i \gtrdot x$, then $v_i \neq v_{xy} + a/2 \implies |P_{< z_i}| > n-k-|P_{>y}|$. 
		\item If $z_i \in \MID_x^{\text{min}}$, then $v_i \neq v_{xy} + a/2 \implies |P_{<z_i}| > n-k-|P_{>y}|$.
		\item If $z_i \in \MID_y^{\text{max}}$, then $v_i \neq v_{xy} - a/2 \implies |P_{> z_i}| > n-k - |P_{< x}|$. 
		\item If $z_i \in \END_x$ and $z_i \lessdot x$, then $v_i \neq v_{xy} + a/2 \implies |P_{z_i < \ldots < y}| > k$. 
		\item If $z_i \in \END_y$ and $z_i \gtrdot y$, then $v_i \neq v_{xy} - a/2 \implies |P_{x < \cdot < z_i}| > k$. 
	\end{enumerate}
\end{prop}

In Proposition~\ref{prop-conditions-for-special-vectors-on-a-v}, the result was only stated for poset elements satisfying certain minimal/maximal relations. But, the result is still true for all poset elements in the corresponding region. This immediately gives Corollary~\ref{cor-key-result}. 

\begin{cor} \label{cor-key-result}
	Let $z_i \in P \backslash \{x, y\}$ be an element of our poset. Let $v_{xy} := \frac{1}{2} (v_x + v_y)$. Then, the following statements are true. 
	\begin{enumerate}[label = (\alph*)]
		\item If $z_i \in \MID$, then $v_i \neq v_{xy} - a/2 \implies |P_{> z_i}| > n-k - |P_{< x}|$. 
		\item If $z_i \in \MID$, then $v_i \neq v_{xy} + a/2 \implies |P_{< z_i}| > n-k-|P_{>y}|$. 
		\item If $z_i \in \MID_x$, then $v_i \neq v_{xy} + a/2 \implies |P_{<z_i}| > n-k-|P_{>y}|$.
		\item If $z_i \in \MID_y$, then $v_i \neq v_{xy} - a/2 \implies |P_{> z_i}| > n-k - |P_{< x}|$. 
		\item If $z_i \in \END_x$ and $z_i \lessdot x$, then $v_i \neq v_{xy} + a/2 \implies |P_{z_i < \ldots < y}| > k$. 
		\item If $z_i \in \END_y$ and $z_i \gtrdot y$, then $v_i \neq v_{xy} - a/2 \implies |P_{x < \cdot < z_i}| > k$.  
	\end{enumerate}
\end{cor}

\begin{proof}
	We only prove (a) since the rest are similar. Let $z_i \in \MID$. Then there is some $z_0 \in \MID$ so that $z_i \leq z_0 \lessdot y$. Then, since $v_{z_i} = v_{z_0}$, we have that 
	\[
		|P_{>z_i}| \geq |P_{>z_0}| > n- k - |P_{< x}|.
	\]
	This suffices for the proof. 
\end{proof}

From Corollary~\ref{cor-coordinates-in-the-regions}, we already know some of the values of $v_i$ in Proposition~\ref{prop-conditions-for-special-vectors-on-a-v}. This immediately implies Corollary~\ref{cor-more-explicit-conditions}. 

\begin{cor} \label{cor-more-explicit-conditions}
	Let $z_i \in P \backslash \{x, y\}$ be an element of our poset. Let $v_{xy} := \frac{1}{2} (v_x + v_y)$. Then, the following statements are true. 
	\begin{enumerate}[label = (\alph*)]
		\item Suppose that $z_i \in \MID_x$. Then $a \neq \frac{2}{3} (1 - v_{xy}) \implies |P_{<z_i}| > n-k-|P_{>y}|$. 
		\item Suppose that $z_i \in \MID_y$. Then $a \neq 2v_{xy} \implies |P_{>z_i}| > n-k-|P_{< x}|$. 
		\item Suppose that $z_i \in \END_x$. Then $a \neq -2v_{xy} \implies |P_{z_i < \cdot < y}| > k$. 
		\item Suppose that $z_i \in \END_y$. Then $a \neq 2 (1 - v_{xy}) \implies |P_{x < \cdot < z_i}| > k$. 
	\end{enumerate}
\end{cor}
Based on Corollary~\ref{cor-more-explicit-conditions}, we define four statements which are stand-ins for midway and dual-midway in the regions $\END_x, \END_y, \MID_x$, and $\MID_y$. Consider the four statements 
\begin{align*}
	M_x & := \left \{ a \neq \frac{2}{3} (1 - v_{xy}) \right \}, \\
	M_y & := \left \{ a \neq 2v_{xy} \right \}, \\
	E_x & := \{ a \neq -2v_{xy} \}, \\
	E_y & := \{ a \neq 2 ( 1 - v_{xy}) \}.
\end{align*}
From Corollary~\ref{cor-more-explicit-conditions}, we know that $M_x$ implies that midway holds for $\MID_x$, $M_y$ implies that dual-midway holds for $\MID_y$, $E_x$ implies that midway holds for $\END_x$, and $E_y$ implies that dual-midway holds for $\END_y$.

\begin{lem} \label{lem-when-a-not-half-then-either-mid-or-dual-mid}
	If $a \neq 1/2$, then either $M_x$ and $E_x$ hold simultaneously or $M_y$ and $E_y$ hold simultaneously. As a consequence, dual-midway holds for $\END_y$ and $\MID_y$ or midway holds for $\END_x$ and $\MID_x$.
\end{lem}

\begin{proof}
	Suppose that $M_x$ and $M_y$ do not hold simultaneously. Then, we have 
	\[
		2v_{xy} = \frac{2}{3} (1 - v_{xy}) \implies v_{xy} = \frac{1}{4}.
	\]
	This would imply that $a = 1/2$, which contradicts the hypothesis. Thus $M_x$ and $M_y$ cannot be false simultaneously. Suppose that $M_x$ and $E_y$ do not hold simultaneously. Then, 
	\[
		2(1 -v_{xy}) = \frac{2}{3} (1 - v_{xy}) \implies v_{xy} = 1.
	\]
	But then $a = 0$ which contradicts $N_i > 0$. Thus $M_x$ and $E_y$ cannot be false simultaneously. Suppose that $E_x$ and $M_y$ do not hold simultaneously. Then we run into the same contradiction $a = 0$. Now, suppose that $E_x$ and $E_y$ do not hold simultaneously. Then we get an obvious contradiction. This suffices for the proof.  
\end{proof}

\subsection{Understanding MID}

We now introduce some results that will give us a better understand of the region $\MID$.

\begin{lem} \label{lem-simultaneously-violate-mid-and-dual-mid}
	Suppose that the hypothesis in Theorem~\ref{kahn-saks-main-workhorse-theorem} holds. Let $z, w \in P$ be two comparable elements in $\MID$. Then, the following are true. 
	\begin{enumerate}[label = (\alph*)]
		\item If $z$ violates midway, then $w$ cannot violate dual-midway. In other words, the inequality $|P_{> y}| + |P_{< z}| \leq n-k$ implies $|P_{<x}| + |P_{>w}| > n-k$. 
		\item If $z$ violates dual-midway, then $w$ cannot violate midway. In other words, the inequality $|P_{< x}| + |P_{> z}| \leq n-k$ implies $|P_{> y}| + |P_{< w}| > n-k$. 
	\end{enumerate}
\end{lem}

\begin{proof}
	We only prove (a) as (b) is symmetric. Since $|P_{> y}| + |P_{< z}| \leq n-k$, we have that $v_z = v_{xy} + a/2$ from Proposition~\ref{prop-conditions-for-special-vectors-on-a-v}. Suppose for the sake of contradiction that $|P_{< x}| + |P_{> w}| \leq n-k$ as well. Then Proposition~\ref{prop-conditions-for-special-vectors-on-a-v} would imply that $v_w = v_{xy} - a/2$. But Corollary~\ref{cor-coordinates-in-the-regions} gives $v_z = v_w$. This would give $a = 0$ which contradicts $N_k > 0$. This suffices for the proof. 
\end{proof}

\begin{prop} \label{prop-connective-alan-kahn-saks}
	Let $z \in P$ be an element satisfying $x < z < y$. Then, the following are true. 
	\begin{enumerate}[label = (\alph*)]
		\item If $|P_{< x}| + |P_{> z}| \leq n-k$, then there is a chain from $z$ to a minimal element of $P_{x < \cdot < y}$ where the minimal element covers an element outside of $P_{x < \cdot < y}$. 
		\item If $|P_{> y}| + |P_{< z}| \leq n-k$, then there is a chain from $z$ to a maximal element of $P_{x < \cdot < y}$ which is covered by an element outside of $P_{x < \cdot < y}$. 
		\item We cannot have both $|P_{< x}| + |P_{> z}| \leq n-k$ and $|P_{> y}| + |P_{< z}| \leq n-k$.  
	\end{enumerate}
\end{prop}

\begin{proof}
 	Part (c) follows from Lemma~\ref{lem-simultaneously-violate-mid-and-dual-mid}. We only prove (a) since (b) is symmetric. There is some $z' \in \MID$ so that $x \lessdot z' \leq z < y$. Suppose that we cannot extend $z'$ to a minimal element outside of $P_{x< \cdot < y}$. Then, we would have $|P_{< z'}| = 1 + |P_{< x}|$. From (b) of Lemma~\ref{lem-simultaneously-violate-mid-and-dual-mid}, we have the inequality 
 	\[
 		n-k < |P_{> y}| + |P_{< z'}| = |P_{> y}| + |P_{< x}| + 1 < n-k
 	\]
 	where the last inequality follows from Theorem~\ref{kahn-saks-main-workhorse-theorem}. This is a contradiction and suffices for the proof of the proposition. 
\end{proof}

\begin{lem} \label{lem-midway-then-mx-false-and-other-version}
	Let $z_i \in \MID$ be an element in the poset. If $z_i$ violates midway then $M_x$ is false. If $z_i$ violates dual midway, then $M_y$ is false. In other words, we have the following implications:
	\begin{align*}
		|P_{> y}| + |P_{< z_i}| \leq n-k & \implies a = \frac{2}{3} (1 - v_{xy}) \\
		|P_{< x}| + |P_{> z_i}| \leq n-k & \implies a = 2v_{xy}. 
	\end{align*}
\end{lem}

\begin{proof}
	Suppose that $z_j \in \MID$ violates midway. That is, we have that $|P_{> y}| + |P_{<z_j}| \leq n-k$. Then, from Proposition~\ref{prop-connective-alan-kahn-saks}, we know that there is an element $z'$ satisfying $x < z_j \leq z' \lessdot y$ so that there exists $z'' \in \MID_x$ where $z' \lessdot z''$. From Proposition~\ref{lem-simultaneously-violate-mid-and-dual-mid}, we have the inequality $|P_{< x}| + |P_{>z'}| \geq n-k+1$. Let 
	\[
		S = P_{x < \cdot < z''} \backslash \MID = \{z : x < z < z'', z \notin \MID\}.
	\]
	Then, we have that 
	\[
		|S| + |\MID| + |P_{< x}| + |P_{>z'}| \leq n
	\]
	since these sets do not over lap. We can conclude that 
	\[
		|S| + |\MID| + 1 \leq n - (|P_{< x}| - |P_{> z'}|) + 1 \leq k.
	\]
	From Proposition~\ref{prop-actually-important-transition-vector}, this implies that $e_{z', z''}$ is a transition vector. Hence $v_{z_j} = 1-a$. Now consider $z_0 \in P$ satisfying $x \lessdot z_0 \leq z_j < y$. We have that 
	\[
		|P_{< z_0}| + |P_{> y}| \leq |P_{< z_j}| + |P_{> y}| \leq n-k.
	\]
	This implies that $v_{z_0} = v_{xy} + a/2$ from Proposition~\ref{prop-conditions-for-special-vectors-on-a-v}. But then this implies that $1 - a = v_{xy} + a/2$ or $a = \frac{2}{3} (1 - v_{xy})$. This means that $M_x$ is false. Similarly, if $z_j$ violates dual-midway, then we would know that $a = 2v_{xy}$ or $M_y$ is false.
\end{proof}

\begin{lem}
	If $a \neq 1/2$, then the hypothesis in Theorem~\ref{kahn-saks-main-workhorse-theorem} implies that $(x, y)$ satisfies $k$-midway or dual $k$-midway. 
\end{lem}

\begin{proof}
	From Lemma~\ref{lem-simultaneously-violate-mid-and-dual-mid}, we know that every element either satisfies dual-midway or satisfies midway (or possibly both). Suppose that there is an element which violates midway and another element which violates dual-midway. Then we must have $M_x$ and $M_y$ are both false, which cannot happen since $a \neq 1/2$. Thus every element in $\MID$ either all satisfies midway or all satisfies dual-midway or all satisfies both. If it all satisfies both, then we are done. If there is at least one element in $\MID$ which violates midway, then all elements in MID satisfies dual-midway. Moreover, it also means that $M_x$ is false from Lemma~\ref{lem-midway-then-mx-false-and-other-version} and $M_y, E_y$ are true. This means that $\MID$, and $\MID_y$ and $\END_y$ satisfies dual-midway, i.e. $P$ satisfies dual $k$-midway. By similar reasoning, if there is at least one element which violates dual-midway, then $P$ satisfies $k$-midway. This suffices for the proof. 
\end{proof}

We can now prove Theorem~\ref{kahn-saks-thm-1} and Theorem~\ref{kahn-saks-thm-2}. 

\begin{proof}[Proof of Theorem~\ref{kahn-saks-thm-2}]
	If $N_{k-1} = N_k = N_{k+1}$, that means that $a = 1$. From the previous theorem, this implies that $P$ satisfies $k$-midway or dual $k$-midway. The reverse direction was proved in \cite{chan2022extensions}. 
\end{proof}

\begin{proof}[Proof of Theorem~\ref{kahn-saks-thm-1}]
	If $a \neq 1/2$, then we have that $P$ satisfies $k$-midway or dual $k$-midway. But this implies that $N_k = N_{k-1} = N_{k+1}$, or $a = 1$. Hence the only two choices of $a$ are $a = 1, 1/2$. This corresponds to $N_{k+1} = N_k = N_{k-1}$ and $N_{k+1} = 2N_k = 4 N_{k-1}$. 
\end{proof}

The only case that is remaining is the case $N_{k+1} = 2N_k = 4 N_{k-1}$. To show that this case actually occurs, we give an example in Figure~\ref{fig:example-of-a-is-two}. In this poset, we can compute explicitly compute $N_1 = 1, N_2 = 2$ and $N_3 = 4$. This satisfies $N_3 = 2N_2 = 4N_1$. 

\begin{figure}[ht] 
	\begin{center}
		\includegraphics[scale = 0.8]{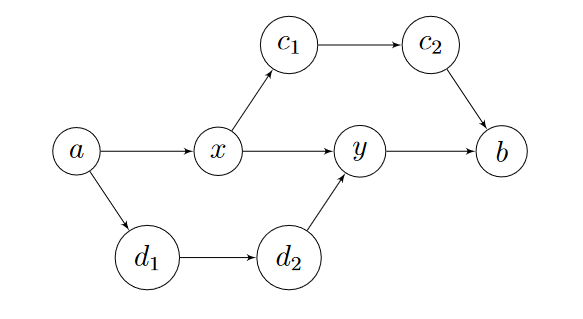}
		\caption{Example with $N_{i+1} = 2N_i = 4 N_{i-1}$}
		\label{fig:example-of-a-is-two}
	\end{center}
\end{figure}

\begin{proof}[Proof of Theorem~\ref{kahn-saks-thm-3}.]
	We first prove that if $N_{k+1} = 2N_k = 4 N_{k-1}$ and $N_k > 0$, then conditions (i)-(iv) are true in Theorem~\ref{kahn-saks-thm-3}. Since $a = 1/2$, we either have that both $M_x$ and $M_y$ are true or both $E_x$ and $E_y$ are true. Suppose that both $M_x$ and $M_y$ are true. Then every element in $\MID$ does not violate midway or dual midway. Moreover, at least one of $E_x$ and $E_y$ must be true as well. This proves that $P$ satisfies midway and dual midway. But this would imply that $a = 1$, which contradicts $a = 1/2$. Hence, it must be the case that $E_x$ and $E_y$ are true. In other words, $\END_x$ and $\END_y$ both satisfy the midway and dual-midway property, respectively.  This is exactly the property that (i) describes. The fact that $\MID$ is empty follows from the fact that $\END_x$ satisfies midway and $\END_y$ satisfies dual midway. Consider an arbitrary $z \in \MID$. Then, we can pick $z_1, z_2 \in P$ so that $x \lessdot z_1 \leq z \leq z_2 \lessdot y$. Since $v_{z_1} = v_{z_2} = v_z$, if both of the inequalities
	\begin{align*}
		|P_{> y}| + |P_{< z_1}| & \leq n-k \\
		|P_{< x}| + |P_{> z_2}| & \leq n-k 
	\end{align*}
	are true, then Corollary~\ref{cor-key-result} implies that $a = 0$, which cannot happen. This means that one of the inequalities must be wrong. Without loss of generality, suppose that $|P_{> y}| + |P_{<z_1}| > n-k$. Note that $\MID_x \sqcup \MID \sqcup \{y\} \sqcup P_{< z_1} \sqcup P_{> y} \subseteq P$ is a disjoint union of sets. Thus, we have that 
	\[
		|\MID_x| + |\MID| + |P_{< z_1}| + |P_{> y}| + 1 \leq n \implies |\MID_x| + |\MID| \leq k-2.
	\]
	Let $z'' \gtrdot y$. This element exists because $P$ has a $1$ element. Then, we have that 
	\[
		|P_{x < \cdot < z''}| \leq |\MID_x| + |\MID| \leq k-2.  
	\]
	But this is a contradiction since $|P_{x < \cdot < z''}| > k$ from the fact that $\END_y$ satisfies dual-midway. This proves (iii). To prove (ii), suppose that $z \in P$ is incomparable to $x$ and $y$. Then, we can build a chain from $z$ to a maximal element by picking only elements incomparable to $x$ and $y$ until we are forced to pick elements in $\MID_x$ or $\END_y$. We can do the same to reach a minimal element. It is not hard to prove that all of these vectors are transition vectors. Hence, we get that $0 = 1 - a$. But this cannot be true since $a = 1/2$. This proves (ii). To prove (iv), suppose that $z \in \MID_y$ and $z' \in \MID_x$ so that $z < z'$. Then, we can find $z_1 \in \MID_y$ and $z_2 \in \MID_x$ such that $z \leq z_1 \lessdot z_2 \leq z'$ since $\MID = \emptyset$. We analyze the conditions needed for $e_{z_1, z_2}$ to be a transition vector. Note that 
	\begin{align*}
		\aff F_{K_0}(e_{z_1, z_2}) & = \bigoplus_{\omega \notin P_{z_1 \leq \cdot \leq y} \cup P_{x \leq \cdot \leq z_2}} \RR[e_\omega] \oplus \RR \left [ \sum_{\omega \in P_{z_1 \leq \cdot \leq y} \cup P_{x \leq \cdot \leq z_2}} e_\omega \right ] \\
		\aff F_{K_1} (e_{z_1, z_2}) & = \sum_{\omega \geq y} e_\omega + \bigoplus_{\omega \notin P_{\leq x} \cup P_{\geq y} \cup \{z_1, z_2\}} \RR[e_\omega]\\
		\aff F_{K_0 + K_1}(e_{z_1, z_2}) & = \sum_{\omega \geq y} e_\omega + \bigoplus_{\omega \notin \{z_1, z_2, x, y\}} \RR[\omega] \oplus \RR \left [ \sum_{\omega \in P_{z_1 \leq \cdot \leq y} \cup P_{x \leq \cdot \leq z_2}} e_\omega \right ].
	\end{align*} 
	The only obstruction to $e_{z_1, z_2} \in \supp \mu$ is the dimension of $F(K_0, e_{z_1, z_2})$. We have that $e_{z_1, z_2} \in \supp \mu$ if and only if 
	\[
		n - |P_{z_1 < \cdot < y}| - |P_{x < \cdot < z_2}| - 3 \geq n - k - 1
	\]
	or $|P_{z_1 < \cdot < y}| + |P_{x < \cdot < z_2}| \leq k-2$. But we know that $e_{z_1, z_2}$ cannot be in the support because that would imply that $0 = 1 - a$. Hence, we have that 
	\[
		|P_{z < \cdot < y}| + |P_{x < \cdot < z'}| \geq |P_{z_1 < \cdot < y}| + |P_{x < \cdot < z_2}| \geq k-1.
	\]
	To finish the proof, we must prove the reverse implication. Specifically, given the conditions that (0) $N_k > 0$, (1) $\END_x$ and $\END_y$ satisfy midway and dual-midway, (2) $\MID$ is empty, (3) every element is comparable to either $x$ or $y$, and (4) for every $z \in \MID_y$ and $w \in \MID_x$ satifsying $z < w$, we have $|P_{z < \cdot < y}| + |P_{x < \cdot < w}| \geq k-1$, we want to prove that $N_{k+1} = 2N_k = 4N_{k-1}$. To prove this, we prove Claim~\ref{auxiliary-claim}.
	\begin{claim} \label{auxiliary-claim}
		Let $L_x$ be the number of linear extensions of the poset $\MID_y \sqcup \END_x$ and let $L_y$ be the number of linear extensions of the poset $\MID_x \sqcup \END_y$. Then, we have that $N_m = 2^{m-1} L_x L_y$ for $m \in \{k-1, k, k+1\}$.
	\end{claim}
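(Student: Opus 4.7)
The plan is to establish a bijection
\[
\Phi:\ \bigl\{\sigma : \sigma(y) - \sigma(x) = m\bigr\}\ \longrightarrow\ L_x \times L_y \times \{0,1\}^{m-1},
\]
where we identify $L_x$ and $L_y$ with the sets of linear extensions of $\END_x \sqcup \MID_y$ and $\MID_x \sqcup \END_y$ respectively. Given such a $\sigma$, the hypotheses that $\MID$ is empty and every element of $P$ is comparable to $x$ or $y$ force the $m-1$ elements strictly between $x$ and $y$ in $\sigma$ (the \emph{gap}) to lie entirely in $\MID_x \cup \MID_y$. I would set $\tau_1 := \sigma|_{\END_x \sqcup \MID_y}$, $\tau_2 := \sigma|_{\MID_x \sqcup \END_y}$, and define $b \in \{0,1\}^{m-1}$ by $b_j = 0$ (resp.\ $b_j = 1$) when the $j$-th gap element lies in $\MID_y$ (resp.\ $\MID_x$). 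Writing $p$ for the number of zeros in $b$ and $q := m-1-p$ for the number of ones, the inverse $\Phi^{-1}(\tau_1,\tau_2,b)$ writes the first $|\END_x|+|\MID_y|-p$ entries of $\tau_1$, then $x$, then the last $p$ entries of $\tau_1$ (call them $A'$) interleaved with the first $q$ entries of $\tau_2$ (call them $B$) according to the pattern $b$, then $y$, and finally the remaining suffix of $\tau_2$.

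The first key step is to use the midway hypotheses on $\END_x$ and $\END_y$ to show that every $\tau_1 \in L_x$ ends with at least $k$ consecutive $\MID_y$-entries and every $\tau_2 \in L_y$ begins with at least $k$ consecutive $\MID_x$-entries. For $z \in \END_x$, a direct inspection of the regions yields
\[
P_{z < \cdot < y} = \{x\} \cup \{u \in \END_x \sqcup \MID_y : u > z\},
\]
since $\MID_x$-elements are incomparable to $y$ and $\END_y$-elements sit above $y$. The hypothesis $|P_{z<\cdot<y}| > k$ then forces $z$ to have at least $k$ strict successors in $\END_x \sqcup \MID_y$, placing $z$ in position at most $|\END_x|+|\MID_y|-k$ of any $\tau_1$. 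The dual argument handles $\tau_2$. Because $m \leq k+1$ gives $p,q \leq k$, the sets $A'$ and $B$ defined above are automatically contained in $\MID_y$ and $\MID_x$, and all $\END_x$-entries of $\tau_1$ sit in the first $|\END_x|+|\MID_y|-p$ positions, so they land before $x$ in the reconstructed $\sigma$.

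The second key step is to use condition (iv) together with $m \leq k+1$ to rule out any constraining pair $z \in A',\ w \in B$ with $z < w$ in $P$. Since $\tau_1$ is a linear extension of $\END_x \sqcup \MID_y$, every $\MID_y$-successor of $z$ appears after $z$ in $\tau_1$, so $|\{u \in \MID_y : u > z\}| \leq p-1$, and analogously $|\{u \in \MID_x : u < w\}| \leq q-1$. Since $P_{z<\cdot<y} = \{u \in \MID_y : u > z\}$ and $P_{x<\cdot<w} = \{u \in \MID_x : u < w\}$ under our hypotheses, condition (iv) would force $k-1 \leq (p-1)+(q-1) = m-3$, contradicting $m \leq k+1$. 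Combined with the structural fact that a comparable pair $z \in \MID_y$, $w \in \MID_x$ always satisfies $z < w$ (otherwise $w < z < y$ would contradict $w \| y$), this shows the gap carries no poset constraints and every interleaving pattern $b$ is admissible.

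The remaining compatibility checks across the five segments of the reconstructed $\sigma$ (left of $x$, then $x$, then the gap, then $y$, then right of $y$) are routine, invoking only the basic region relations $\END_x < x < \MID_x$, $\MID_y < y < \END_y$, and the cross-region comparabilities above. Summing over the $\binom{m-1}{p}$ admissible gap patterns for each $p \in \{0,\dots,m-1\}$ gives
\[
N_m \ =\ L_x L_y \sum_{p=0}^{m-1} \binom{m-1}{p}\ =\ 2^{m-1} L_x L_y.
\]
The main obstacle I anticipate is the dovetailing of the midway hypothesis with condition (iv): midway guarantees that the last $k$ entries of $\tau_1$ and the first $k$ entries of $\tau_2$ are homogeneous, while (iv) ensures no hidden poset relations lurk within these terminal blocks. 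This delicate interplay is precisely what allows the count to close into the clean form $2^{m-1} L_x L_y$ exactly for the three consecutive values $m \in \{k-1,k,k+1\}$ and not beyond.
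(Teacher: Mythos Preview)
Your proof is correct and follows essentially the same bijection as the paper: decompose a linear extension with $\sigma(y)-\sigma(x)=m$ into its restriction to $\END_x\sqcup\MID_y$, its restriction to $\MID_x\sqcup\END_y$, and the $\MID_y/\MID_x$ pattern in the gap, then show this map is invertible using conditions (i)--(iv). Your treatment is in fact somewhat more careful than the paper's: where the paper only asserts that $|\MID_y|\ge m-1$ and $|\MID_x|\ge m-1$ suffice, you correctly observe that what is actually needed is that the \emph{last} $k$ entries of any $\tau_1$ lie in $\MID_y$ (and dually for $\tau_2$), and you derive this directly from the identity $P_{z<\cdot<y}=\{x\}\cup\{u\in\END_x\sqcup\MID_y:u>z\}$ for $z\in\END_x$ together with the $k$-midway bound.
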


	\begin{proof}
		Any linear extension $f$ satisfying $f(y) - f(x) = k$ induces a total ordering on $\MID_x \sqcup \END_y$ and $\MID_y \sqcup \END_x$ as well as a binary string in $\{0, 1\}^{m-1}$ where a $0$ in the $i$th coordinate means that the $i$th element between $x$ and $y$ is in $\MID_x$ and a $1$ in the $i$th coordinate menas that the $i$th element between $x$ and $y$ is in $\MID_y$. Given this binary string and the two total orderings, we can reconstruct the linear extension $f$. This is because after specifying the positions between $x$ and $y$ that will be in $\MID_x$ and $\MID_y$, the exact placement of the elements will be completely determined by the total orderings. In order to prove the claim, it suffices to prove that if we try to do this reconstruction process with any two linear extensions on $\MID_x \sqcup \END_y$ and $\MID_y \sqcup \END_x$ and any binary string in $\{0, 1\}^{m-1}$, we will get a linear extension of $P$ with $f(y) - f(x) = k$. There are three possibilities that may stop $f$ from being a linear extension: 
		\begin{enumerate}[label = (\arabic*)]
			\item $\MID_y$ may be too small. For example, we may not be able to accommodate the string of all zeros $0 \ldots 0$. To overcome this problem we must prove that $|\MID_y| \geq m-1$. 
			\item $\MID_x$ may be too small for the same reason. Similarly, we must prove that $|\MID_x| \geq m-1$. 
			\item Assuming that (1) and (2) are non-issues, the final issue is that the final ordering may not be a linear extension. This will happen when there is a $z \in \MID_x$ and $w \in \MID_y$ satisfying $w < z$ where $z$ lies between $x$ and $w$ in the final ordering, and $w$ lies between $z$ and $y$ in the final ordering.
		\end{enumerate}
		Note that (1) and (2) are not issues after applying the midway property and dual midway property of $\END_x$ and $\END_y$ to $z_x \lessdot x$ and $z_y \gtrdot y$. These elements exists because of the assumption of a $0$ element and $1$ element. To prove that (3) is not an issue, suppose for the sake of contradiction that we had elements $z \in \MID_x$ and $w \in \MID_y$ that satisfy the problem in (3). Then, we would have 
		\begin{align*}
			m-1 & \geq \left | P_{x < \cdot < z} \sqcup \{z\} \sqcup |P_{w < \cdot < y}| \sqcup \{w\} \right | = 2 + |P_{x < \cdot < z}| + |P_{w < \cdot < y}| \geq k+1.
		\end{align*}
		But we know that $m \leq k+1$, which is a contradiction. This suffices for the proof of the claim. 
	\end{proof}
	From Claim~\ref{auxiliary-claim}, we have that 
	\[
		\frac{N_{k+1}}{4} = \frac{N_{k}}{2} = N_{k-1} = 2^{k-2} L_x L_y. 
	\]
	This completes the proof.
\end{proof}
\section{Stanley's Matroid Inequality} \label{sec:stanley-matroid-inequality}

Stanley's poset inequality was proven by Richard Stanley in \cite{STANLEY}. We have already written about this inequality extensively in Section~\ref{stanley-poset-inequality}. In the same paper, Stanley proves an inequality associated to the bases of a matroid. Let $M = (E, \mcB)$ be a matroid of rank $r$ with ground set $E$ and bases $\mcB$. For any subsets $T_1, \ldots, T_r \subseteq E$, we can let $B(T_1, \ldots, T_r)$ denote the number of sequences $(y_1, \ldots, y_r)$ where $y_i \in T_i$ for $i \in [r]$ such that $\{y_1, \ldots, y_r\} \in \mathcal{B}$. Explicitly, we have that 
\[
	B(T_1, \ldots, T_r) := \# \{ (y_1, \ldots, y_r) \in T_1 \times \ldots \times T_r : \{y_1, \ldots, y_r\} \in \mcB(M) \}.
\]
Stanley's inequality associated to this sequence is written in Theorem~\ref{stanley-matroid-inequality-full-generality}. The inequality describes the log-concavity of the basis counting sequence $B(T_1, \ldots, T_r)$. 

\begin{thm}[Theorem 2.1 in \cite{STANLEY}] \label{stanley-matroid-inequality-full-generality}
	Let $M = (E, \mcB)$ be a regular matroid of rank $r$. Let $\mathcal{T} = (T_1, \ldots, T_{r-m})$ be a collection $r-m$ subsets of $E$ and let $X, Y \subseteq E$. Then, 
	\[
		B(\mathcal{T}, \underbrace{X, \ldots, X}_{k \text{ times}}, \underbrace{Y, \ldots, Y}_{m-k \text{ times}})^2 \geq B(\mathcal{T}, \underbrace{X, \ldots, X}_{k-1 \text{ times}}, \underbrace{Y, \ldots, Y}_{m-k+1 \text{ times}}) B(\mathcal{T}, \underbrace{X, \ldots, X}_{k+1 \text{ times}}, \underbrace{Y, \ldots, Y}_{m-k-1 \text{ times}})
	\]
	for all $1 \leq k \leq m-1$. 
\end{thm}

Before we present the proof of Theorem~\ref{stanley-matroid-inequality-full-generality}, we will first go over two different ways to view the basis counting numbers $B(T_1, \ldots, T_r)$. The first way to view these numbers is to associate them with a mixed volume of suitable polytopes. The second way to view these numbers is to associate them with a mixed discriminant of suitable matrices. Both the mixed volume and mixed discriminant perspective will lead to immediate proofs of the theorem. When we study the equality case of the inequality, it is also useful to have both perspectives. If we wish to prove Theorem~\ref{stanley-matroid-inequality-full-generality} with the regularity assumption removed, the mixed volume and mixed discriminant perspectives will prove to be insufficient. At the core of the mixed discriminant and mixed volume perspective is that the matroid has a unimodular coordinatization. In Section~\ref{subsection-matroid-lorentzian-stanley-general} we will remove the regularity assumption and prove Stanley's matroid inequality in its full generality using the technology of Lorentzian polynomials.

\subsection{Mixed Volume Perspective of Basis Counting Number} \label{section-mixed-volume-persepctive-of-basis-counting-number}

Since our matroid $M = (E, \mcB)$ is unimodular, there is a unimodular coordinatization $v : E \to \RR^r$. This implies that to every subset $T \subseteq E$, we can associate the zonotope
\[
	Z(T) := Z(v(e) : e \in T) = \sum_{e \in T}[0, v(e)].
\]
Since $v : E \to \RR^r$ is a unimodular coordination, any determinant of $r$ of these vectors will lie in the set $\{0, \pm 1\}$. From Example~\ref{example-volume-of-a-zonotope}, this gives us the equation 
\begin{equation} \label{volume-of-zonotope-unimodular-case-equation}
	\Vol_r(Z(T)) = \sum_{I \subseteq T : |I| = r} |\text{Det}(v(e) : e \in I)| = \sum_{I \subseteq T : |I| = r} \1_{I \text{ is a basis}} = \# \mcB \left (M|_T \right ).
\end{equation}
Given subsets $T_1, \ldots, T_r \subseteq E$, consider the Minkowski sum $\sum_{i = 1}^r \lambda_i Z(T_i)$. From Equation~\ref{volume-of-zonotope-unimodular-case-equation}, we have that 
\begin{align*}
	\Vol_r \left ( \sum_{i = 1}^r \lambda_i Z(T_i) \right ) & = \Vol_n \left (\sum_{i = 1}^r \sum_{e \in T_i} [0, \lambda_i v(e)] \right ) = \sum_{a_1 + \ldots + a_r = r} C(a_1, \ldots, a_r) \lambda_1^{a_1} \ldots \lambda_r^{a_r}
\end{align*}
where $C(a_1, \ldots, a_r)$ is the number of ways to pick subsets $Q_i \subseteq T_i$ and $|Q_i| = a_i$ for $i \in [r]$ where $Q_1 \cup \ldots \cup Q_r$ is a basis of $M$. According to Theorem~\ref{mixed-volume-polynomial-expansion-FINAL}, we have that 
\[
	C(a_1, \ldots, a_r) = \binom{r}{a_1, \ldots, a_r} \mathsf{V}_r (Z(T_1)[a_1], \ldots, Z(T_r)[a_r])
\]
and combinatorially we have $B(T_1, \ldots, T_r) = C(1, \ldots, 1)$. Thus, we have the equation
\begin{equation} \label{eqn-mixed-vol-perspective-matroid-inequality}
	B(T_1, \ldots, T_r) = \binom{r}{1, \ldots, 1} \mathsf{V}_r(Z(T_1), \ldots, Z(T_r)) = r! \mathsf{V}_r(Z(T_1), \ldots, Z(T_r)). 
\end{equation}
Equation~\ref{eqn-mixed-vol-perspective-matroid-inequality} immediately implies Theorem~\ref{stanley-matroid-inequality-full-generality} via the Alexandrov-Fenchel inequality (Theorem~\ref{AF-inequality}).

\subsection{Mixed Discriminant Perspective of the Basis Counting Number} \label{sec:mixed-discriminant-stanley-basis}

From Section~\ref{section-mixed-volume-persepctive-of-basis-counting-number}, we saw that the regularity condition on our matroid $M$ implies that there exists a unimodular coordinatization $v : E \to \RR^r$. From \cite{bapat_raghavan_1997}, this implies that we can view the basis counting sequence in terms of mixed discriminants. To see this, consider $T_1, \ldots, T_s \subseteq E$. Recall that for $r_1, \ldots, r_s \geq 0$ satisfying $r_1 + \ldots + r_s = r$, we defined $B(T_1[r_1], \ldots, T_s[r_s])$ as the number of sequences in $T_1^{r_1} \times \ldots \times T_s^{r_s}$ which forms a basis of $M$. We define a different sequence $N(r_1, \ldots, r_s) := N^{T_1, \ldots, T_s}(r_1, \ldots, r_s)$ which is the number of ways to pick $Q_i \subseteq T_i$ with $|Q_i| = r_i$ such that $Q_1 \cup \ldots \cup Q_s$ is a basis of $M$. Then, we have that
\begin{equation} \label{eqn-nice-equality-between-combinatorial-sequences-matroid-mixed-discriminant}
	B(T_1[r_1], \ldots, T_s[r_s]) = r_1! \ldots r_s! N^{T_1, \ldots, T_s}(r_1, \ldots, r_s).
\end{equation}
For $1 \leq i \leq s$, let $X_i$ be the matrix with colums $v(e)$ for $e \in T_i$. Let $A_i = X_i X_i^T$. Then, we know that 
\begin{equation} \label{nice-discriminant-perspective-matroid-inequality-stanley}
	\mathsf{D}_r (A_1[r_1], \ldots, A_s[r_s]) = \frac{1}{r!} B(T_1[r_1], \ldots, T_s[r_s]) = \frac{N^{T_1, \ldots, T_s}(r_1, \ldots, r_s)}{\binom{r}{r_1, \ldots, r_s}}
\end{equation}
from Lemma~\ref{mixed-discriminant-explicit-formula} and Equation~\ref{eqn-nice-equality-between-combinatorial-sequences-matroid-mixed-discriminant}. Equation~\ref{nice-discriminant-perspective-matroid-inequality-stanley} immediately implies Theorem~\ref{stanley-matroid-inequality-full-generality} via Alexandrov's Inequality on mixed discriminants (Theorem~\ref{A-Inequality-MIXED-DISCRIMINANT}).

\subsection{Equality cases of Stanley's Matroid Inequality for Graphic Matroids}

At the moment, we have represented Stanley's basis counting sequence in terms of mixed volumes and in terms of mixed discriminants. Based on our knowledge of the equality cases of the Alexandrov inequality for mixed discrminants and the Alexandrov-Fenchel inequality, we can hope to understand the combinatorial conditions on regular matroids $M$ which give equality in the log-concavity inequality. To simplify the problem, we only consider the case where we have two subsets $T_1, T_2 \subseteq E$ which partition our ground set. In other words, we have some set $R \subseteq E$. We define $N_i$ to be the number of bases of $M$ which intersect $R$ at $k$ points. From our analysis in previous sections, we know that 
\[
	N_i = C^{R, E \backslash R} (i, r-i) = \binom{r}{i} \mathsf{V}_r (Z(R)[k], Z(E \backslash R)[r-k]).
\] 
Thus, the sequence $N_i$ is ultra-log-concave. Let $\widetilde{N}_i = N_i / \binom{r}{i}$ be the log-concave normalization of $N_i$. We study the equality cases of $\widetilde{N}_i^2 = \widetilde{N}_{i-1} \widetilde{N}_{i+1}$. From the mixed volume perspective, Stanley in \cite{STANLEY} is able to give the equality cases for the Minkowski inequality analog for his basis counting sequence. However, we will later see that in the strictly positive case, the extremals of the Minkowski inequality analog are exactly the extremals of $\widetilde{N}_k^2 = \widetilde{N}_{k-1} \widetilde{N}_{k+1}$ for some $k$. This fact comes from viewing the sequence as mixed discriminants. It is unknown to the author whether or not Stanley knew about this fact at the time, but it is an interesting fact nonetheless.

\begin{thm} [Theorem 2.8 in \cite{STANLEY}] \label{stanley-equality-cases-matroid-thm}
	Let $M$ be a loopless regular matroid of rank $n$ on the finite set $E$, and let $R \subseteq S$. Then, the following two conditions are equivalent:
	\begin{enumerate}[label = (\alph*)]
		\item $\widetilde{N}_1^n = \widetilde{N}_0^{n-1} \widetilde{N}_n$. 
		\item One of the following two conditions hold:
		\begin{enumerate}[label = (\roman*)]
			\item $\widetilde{N}_1 = 0$.
			\item There is some rational number $q \in (0, 1)$ such that for every point $x \in E$, we have that $|\overline{x} \cap R| = q |\overline{x}|$ for all $x \in E$. Here $\overline{x}$ is the closure of the point $x$.
		\end{enumerate}
	\end{enumerate}
\end{thm}

Before we present Stanley's proof of Theorem~\ref{stanley-equality-cases-matroid-thm} for regular matroids, we first present the proof for a combinatorial characterization of the equality cases of Stanley's matroid inequality for graphic matroids. This proof will use the mixed discriminant perspective to find the extremals of $\widetilde{N}_k^2 \geq \widetilde{N}_{k-1} \widetilde{N}_{k+1}$ in the strictly positive case. In general, the combinatorial meaning of the matrices which show up in the mixed discriminant perspective are difficult to decipher. However, in the case of graphic matroids, the matrices in the mixed discriminant are related to the Laplacian of the underlying graph. Suppose that $M$ is a graphic matroid. Proposition~\ref{prop:graphic-matroids-connected} implies that there is a connected graph $G = (V, E)$ such that $M \cong M(G)$. Then, in the case of graphic matroids, we have Theorem~\ref{stanley-matroid-thm-for-graphic-matroids-equality}.

\begin{thm} \label{stanley-matroid-thm-for-graphic-matroids-equality}
	Suppose that $M$ is a graphic matroid corresponded to a connected graph $G = (V, E)$. Let $n = |V|$ and let $E = R \sqcup Q$ be a partition of the edge set so that both $R$ and $Q$ contain at least one spanning tree of $G$. Then, the following conditions are equivalent:
	\begin{enumerate}[label = (\alph*)]
		\item $\widetilde{N}_k^2 = \widetilde{N}_{k-1} \widetilde{N}_{k+1}$ for some $k \in \{1, \ldots, n-1\}$.

		\item $\widetilde{N}_k^2 = \widetilde{N}_{k-1} \widetilde{N}_{k+1}$ for all $k \in \{1, \ldots, n-1\}$. 

		\item For any two distinct vertices $v, w \in V(G)$, the ratio between the number of edges between $v$ and $w$ that are in $R$ to the number of edges between $v$ and $w$ that are in $Q$ is some positive number which does not depend on our choice of $v, w \in V(G)$. 
	\end{enumerate}
\end{thm}

\begin{proof}
	The proof of $(c) \implies (b)$ follows from Theorem~\ref{one-direction-of-the-conjecture}. It suffices to prove $(a) \implies (c)$. Suppose that $n \geq 4$. Consider the edges of a $R$ spanning tree. Without loss of generality, we get let the vertices $n-1$ and $n$ be two leaves of this spanning tree. Since $n \geq 4$, there exists a $R$-edge connecting two vertices each of which neither $n-1$ nor $n$. . \\

	Let $B \in \RR^{n \times |E|}$ be the incidence matrix of $G$. We can partition the columns of $B = [B_R | B_Q]$ where $B_R$ are the columns corresponding to the edges in $R$ and $B_Q$ are the columns corresponding to the edges in $Q$. We can further partition $B_R$ and $B_Q$ into
	\begin{align*}
		B_R & = \begin{bmatrix} X_R \\ v_R^T \end{bmatrix}, \quad \text{ and } \quad B_Q = \begin{bmatrix} X_Q \\ v_Q^T \end{bmatrix}
	\end{align*}
	where $X_R$ and $X_Q$ are $B_R$ and $B_Q$ with the bottom row removed. We know that the reduced incidence matrix $C = [X_R | X_Q]$ is a unimodular coordinatization of our graphic matroid. From the equality cases of Alexandrov's inequality for mixed discriminants, the equality in (a) implies that $X_RX_R^T = \alpha X_Q X_Q^T$ for some $\alpha \in \RR$. We can apply the mixed discriminant equality characterization because $X_R, X_Q$ are both full rank and $X_RX_R^T$, $X_QX_Q^T$ are positive definite. Let $L_R$ be the Laplacian of the subgraph on $V$ with $R$-edges and $L_Q$ be the Laplacian of the subgraph on $V$ with $Q$-edges. Proposition~\ref{incidence-and-laplacian-relation-prop} gives us
	\begin{align*}
		L_{G|_R} & = B_R B_R^T = \begin{bmatrix}
			X_RX_R^T & \bullet \\ \bullet & \bullet
		\end{bmatrix}, \\
		L_{G|_Q} & = B_Q B_Q^T = \begin{bmatrix}
			X_Q X_Q^T & \bullet \\ \bullet & \bullet
		\end{bmatrix}.
	\end{align*}
	where we only care about the top left $(n-1) \times (n-1)$ principal submatrix. This tells us the off-diagonal entries of $X_R X_R^T$ and $X_QX_Q^T$ encode exactly the number of $R$-edges and $Q$-edges between vertices in $V \backslash n$. This implies that 
	\[
		e_R(v, w) = \alpha \cdot e_Q(v, w) \text{ for all distinct } v, w \in [n] \backslash \{n\}.
	\]
	From the assumption that $R$ and $Q$ contain spanning trees, it must be the case that $\alpha > 0$. If we repeat the same argument except we delete the row corresponding to $n-1$, we get a similar result:
	\[
		e_R(v, w) = \beta \cdot e_Q(v, w) \text{ for all distinct } v, w \in [n] \backslash \{n-1\}.
	\]
	From the same reasoning, we must have $\beta > 0$. From our construction, there is a $R$-edge connecting two vertices in $[n] \backslash \{n-1, n\}$. This implies that $\alpha = \beta$. The only thing we must check is that $e_R(n-1, n) = \alpha \cdot e_Q(n-1, n)$. But this follows $\deg_R(v) = \alpha \deg_Q(v)$ for all $v \in [n]$. This proves (c) for $n \geq 4$. We defer the case $n = 3$ to Lemma~\ref{lem:base-case} in the appendix. The case $n < 3$ is vacously true. This suffices for the proof.
\end{proof}

\subsection{Equality cases of Stanley's Matroid Inequality for Regular Matroids}

Before we prove Theorem~\ref{stanley-equality-cases-matroid-thm} for regular matroids, we give necessary and sufficient conditions for when two zonotopes are homothetic to each other. For any $v_1, \ldots, v_m \in \RR^n$, we define the \textbf{centrally symmetric zonotope} generated by $v_1, \ldots, v_m$ as the convex body given by
\[
	Z_0 (v_1, \ldots, v_m) = \sum_{i = 1}^m [-v_i, v_i] = \left \{ \sum_{i = 1}^m \lambda_i v_i : \lambda_i \in [-1, 1] \text{ for all } 1 \leq i \leq m \right \}.
\]
This is related to our definition of a zonotope by the relation 
\[
	Z_0(v_1, \ldots, v_m) = 2 Z(v_1, \ldots, v_m) - \sum_{i = 1}^m v_i.
\]
In the literature, a zonotope is generally defined to be simply the Minkowski sum of a collection of segments. However, it is common to see the study of zonotopes restricted to either centrally symmetric zonotopes or zonotopes in the sense we have defined them. For examples of such conventions, we refer the reader to \cite{schneider_2013,Shephard_Zonotopes,STANLEY}. We call a convex body a (centrally symmetric) \textbf{zonoid} if it is the limit, in the sense of Hausdorff distance, of a sequence of (centrally symmetric) zonotopes. From Theorem~\ref{thm:representation-of-zonoid}, to every centrally symmetric zonoid we can associate an even measure on $\mathbb{S}^{n-1}$. From Theorem~\ref{thm:uniqueness-of-even-measure}, this even measure is unique. Both of these facts are found in \cite{schneider_2013}. 

\begin{thm} \label{thm:representation-of-zonoid}
	A convex body $K \subseteq \RR^n$ is a centrally symmetric zonoid if and only if its support function can be represented in the form 
	\[
		h_K(x) = \int_{\mathbb{S}^{n-1}} |\langle x, v \rangle | \, \rho (dv) \text{ for all $x \in \RR^n$}
	\]
	where $\rho$ is some even measure on $\mathbb{S}^{n-1}$.
\end{thm}
\begin{proof}
	See Theorem 3.5.3 in \cite{schneider_2013}.
\end{proof}

\begin{thm} \label{thm:uniqueness-of-even-measure}
	If $\rho$ is an even signed measure on $\mathbb{S}^{n-1}$ with 
	\[
		\int_{\mathbb{S}^{n-1}} |\langle u, v \rangle | \, \rho(dv) = 0 \text{ for } u \in \mathbb{S}^{n-1}, 
	\]
	then $\rho = 0$. 
\end{thm}

\begin{proof}
	See Theorem 3.5.4 in \cite{schneider_2013}. 
\end{proof}

For a centrally symmetric zonotope $Z_0 = Z_0(v_1, \ldots, v_m)$, the support function is given by the equation $h_{Z_0}(x) = \sum_{i = 1}^k \norm{v_i} \cdot  | \langle x, u_i \rangle |$ where $u_i := v_i / \norm{v_i}$. Thus, the corresponding even signed measure will be given by 
\begin{equation} \label{eqn:even-signed-measure-of-zonotope}
	\rho_{Z_0} = \sum_{i = 1}^k \frac{\norm{v_i}}{2} ( \delta_{u_i} + \delta_{-u_i}).
\end{equation}
With the description and the uniqueness of the even measure corresponding to centrally symmetric zonoids, we immediately get the result in Lemma~\ref{lem-condition-zonotope-homothety}. 

\begin{lem} \label{lem-condition-zonotope-homothety}
	Let $v_1, \ldots, v_r, w_1, \ldots, w_s \in \RR^n$ be non-zero vectors such that none of the $v_i$'s are parallel and none of the $w_i$'s are parallel. Then, there exists $\lambda \in \RR$ and $x \in \RR^n$ such that
	\[
		Z(v_1, \ldots, v_r) = \lambda Z(w_1, \ldots, w_s) + x
	\]
	if and only if $r = s$, and up to a permutation of the vectors $v_i = \lambda w_i$ for all $i$.
\end{lem}

\begin{proof}
	It is clear that the conditions in the lemma imply that $Z(v_1, \ldots, v_r)$ and $Z(w_1, \ldots, w_s)$ are homothetic. For the other direction, note that $Z(v_1, \ldots, v_r)$ and $Z(w_1, \ldots, w_s)$ being homothetic implies that $Z_0(v_1, \ldots, v_r)$ and $Z_0(w_1, \ldots, w_s)$ are also homothetic. Hence, we must have
	\[
		Z_0 (v_1, \ldots, v_r) = \lambda Z_0(w_1, \ldots, w_s)
	\]
	where $\lambda$ is the same dilation factor in the homothety for $Z(v_1, \ldots, v_r)$ and $Z(w_1, \ldots, w_s)$. In the homothety between centrally symmetric zonotopes, there is only a dilation factor since they both have center of mass at $0$. From the uniqueness and description in Equation~\ref{eqn:even-signed-measure-of-zonotope} of the even measure corresponding to centrally symmetric zonoids, we can conclude that $r = s$ and $v_i = \lambda s_i w_i$ for all $i$ up to a reordering of the vectors and some choice of $s_i \in \{\pm 1\}$. This suffices for the proof of the lemma. 
\end{proof}

We can now begin the proof of Theorem~\ref{stanley-equality-cases-matroid-thm}. 
\begin{proof}[Proof of Theorem~\ref{stanley-equality-cases-matroid-thm}]
	The fact that (b) implies (a) is the subject of Theorem~\ref{one-direction-of-the-conjecture}. For the other direction, let $Q = E \backslash R$ and let $v : E \to \RR^n$ be a unimodular coordinatization of $M$. Since negating vectors in our coordinatization will not break the unimodularity, we can assume that whenever two vectors are parallel, they are positive scalar multiples of each other. Since our coordinatization is unimodular, we know that two vectors are equal to each other if and only if they are parallel. Moreover, since our matroid is loopless, all of the vectors are non-zero. So, for every parallel class $\overline{x}$ there is a vector $v_{\overline{x}} \in \RR^n$ such that $v_y = v_{\overline{x}}$ if and only if $y \in \overline{x}$. Recall that for all $0 \leq k \leq n$, we have that 
	\[
		N_k = \binom{n}{i} \mathsf{V}_n \left ( \underbrace{Z(R), \ldots, Z(R)}_{k \text{ times}}, \underbrace{Z(Q), \ldots, Z(Q)}_{n-k \text{ times}} \right ). 
	\]
	Thus the equality $\widetilde{N}_1^n = \widetilde{N}_0^{n-1} \widetilde{N}_n$ is equivalent to the equality in Minkowski's inequality (Theorem~\ref{minkowski-inequality}):
	\[
		\mathsf{V}_n \left ( Z(R), \underbrace{Z(Q), \ldots, Z(Q)}_{n-1 \text{ times}} \right )^n \geq \Vol_n (Z(R))^{n-1} \cdot \Vol_n(Z(Q)).
	\]
	From the equality case of Theorem~\ref{minkowski-inequality}, we know that one of the following three things occur:
	\begin{enumerate}[label = (\arabic*)]
		\item $\dim Z(R) \leq n-2$. 
		\item $\dim Z(R)$ and $\dim Z(Q)$ lie in parallel hyperplanes. 
		\item $Z(R)$ and $Z(Q)$ are homothetic. 
	\end{enumerate}
	If $\dim Z(R) \leq n-2$ or $\dim Z(R)$ and $\dim Z(Q)$ lie in parallel hyperplanes, then we know that $\Vol_n (Z(R)) = \Vol_n(Z(Q)) = 0$. This implies that $\widetilde{N}_1 = 0$. Now suppose that $Z(R)$ and $Z(Q)$ are homothetic. Let $R = \{v_1, \ldots, v_r\}$ and $Q = \{w_1, \ldots, w_s\}$ with $r + s = |E|$ be the vectors in the coordinations. Let $R' = \{v_1', \ldots, v_t'\}$ and $Q' = \{w_1', \ldots, w_u'\}$ be the result of adding parallel vectors together. The collection $R'$ consists of vectors of the form $|R \cap \overline{x}| v_{\overline{x}}$ for all $x$ satisfying $|R \cap \overline{x}| > 0$. The same is true for the collection $Q'$. Since these zonotopes are homothetic, we know from Lemma~\ref{lem-condition-zonotope-homothety} that $t = u$ and there exists a $\lambda > 0$ so that after a relabeling of indices we have $v_i' = \lambda s_i w_i'$ for all $1 \leq i \leq t$ where $s_i \in \{\pm 1\}$. From our construction, we are forced to have $s_i = 1$. Thus, it must be the case that equality holds if and only if there exists $\lambda > 0$ so that $|R \cap \overline{x}| = \lambda |Q \cap \overline{x}|$ for all $x \in E$. This suffices for the proof. 
\end{proof}

\begin{remark}
	In his paper \cite{STANLEY}, Stanley gives a combinatorial proof of Theorem~\ref{stanley-equality-cases-matroid-thm} by proving Lemma~\ref{lem-condition-zonotope-homothety} with the additional hypothesis that all vectors lie in the same orthant. Because the vectors are forced to be in the same positive orthant, he can conclude that $v_i = \lambda w_i$ without any sign constraints from the $s_i$. However, it is not clear to us why he can apply this fact to Theorem~\ref{stanley-equality-cases-matroid-thm}. In particular, Stanley did not prove that for any regular matroid, it is possible to find a unimodular coordinatization such that all of the vectors lie in the same orthant. 
\end{remark}

With the mixed discriminant perspective, we can extend Theorem~\ref{stanley-equality-cases-matroid-thm} slightly to the case where we are only guarenteed that one of the inequalities $\widetilde{N}_k^2 \geq \widetilde{N}_{k-1} \widetilde{N}_{k+1}$ is equality. Indeed, by viewing the sequence $\widetilde{N}_k$ as a sequence of mixed discriminants, we can prove Lemma~\ref{connection-lemma}. 

\begin{lem} \label{connection-lemma}
	Let $M$ be a regular, loopless matroid of rank $n$. Suppose that $\widetilde{N}_0 > 0$ and $\widetilde{N}_n > 0$. Then $\widetilde{N}_1^n = \widetilde{N}_0^{n-1} \widetilde{N}_n$ if and only if $\widetilde{N}_k^2 = \widetilde{N}_{k-1} \widetilde{N}_{k+1}$ for some $k \in \{1, \ldots, n-1\}$.
\end{lem}

\begin{proof}
	Assuming that $\widetilde{N}_1^n = \widetilde{N}_0^{n-1} \widetilde{N}_n$, we know that $\widetilde{N}_k^2 = \widetilde{N}_{k-1} \widetilde{N}_{k+1}$ for all $k$ from our proof of equality conditions in Theorem~\ref{minkowski-inequality}. For the converse, suppose that $\widetilde{N}_k^2 = \widetilde{N}_{k-1} \widetilde{N}_{k+1}$ for some $k$. Recall that we can view 
	\[
		\widetilde{N}_k = \mathsf{D} (A_R [k], A_Q [n-k])
	\]
	where $A_R = X_R X_R^T$, $A_Q = X_Q X_Q^T$, and $X_R, X_Q$ are the matrices which consist of the unimodular representations of the elements in $R$ and $Q$ as columns. From Corollary~\ref{cor-one-implies-all-mixed-discriminants}, we get that $\widetilde{N}_k^2 = \widetilde{N}_{k-1} \widetilde{N}_{k+1}$ holds for all $k \in \{1, \ldots, n-1\}$. We can apply Corollary~\ref{cor-one-implies-all-mixed-discriminants} because the condition $\widetilde{N}_0 > 0$ and $\widetilde{N}_n > 0$ implies that the matrices $X_R X_R^T$ and $X_QX_Q^T$ are positive definite. 
\end{proof}

From Lemma~\ref{connection-lemma}, whenever $\widetilde{N}_0, \widetilde{N}_n > 0$ (which translate to $R$ and $Q$ both having full rank), we get that the necessary and sufficient conditions for equality in Theorem~\ref{stanley-equality-cases-matroid-thm} are exactly the necessary and sufficient conditions to guarentee equality at a single instance of the log-concavity inequality. Thus, we make the following conjecture for all matroids (not necessarily regular). 

\begin{conj} \label{matroid-conjecture}
	Let $M$ be a loopless matroid of rank $n$ on a set $E$. Let $R \subseteq E$ be a subset and let $Q = E \backslash R$. For $k$, $0 \leq k \leq n$, we define $N_k$ to be the number of bases of $M$ with $k$ elements in $R$. Define $\widetilde{N}_k = \frac{N_k}{\binom{n}{k}}$. Suppose that $R$ and $Q$ both have rank $n$. Then, the following are equivalent. 
	\begin{enumerate}[label = (\alph*)]
		\item $\widetilde{N}_k^2 = \widetilde{N}_{k-1} \widetilde{N}_{k+1}$ for some $k \in \{1, \ldots, n-1\}$. 

		\item $\widetilde{N}_k^2 = \widetilde{N}_{k-1} \widetilde{N}_{k+1}$ for all $k \in \{1, \ldots, n-1\}$.

		\item There are positive integers $r, q \geq 1$ such that $|\overline{x} \cap R| r = |\overline{x} \cap Q| q$ for all $x \in E$. 
	\end{enumerate}
\end{conj}

To show some evidence of Conjecture~\ref{matroid-conjecture}, we will prove one of the directions of Conjecture~\ref{matroid-conjecture}. This result is written in the statement of Theorem~\ref{one-direction-of-the-conjecture}.

\begin{thm} \label{one-direction-of-the-conjecture}
	Suppose that there are positive integers $r, s \geq 1$ such that $| \overline{x} \cap R| r = |\overline{x} \cap Q| q$. Then, $\widetilde{N}_k^2 = \widetilde{N}_{k-1} \widetilde{N}_{k+1}$ for all $k \in \{1, \ldots, n-1\}$. In particular, we have that $\widetilde{N}_k = \frac{q}{r} \widetilde{N}_{k-1}$. 
\end{thm}

\begin{proof}
	For all $x$, we have that $| \overline{x} \cap R| r = |\overline{x} \cap Q| q$. This implies that for $x \in E$, we can define an isomorphism $\varphi_x : (\overline{x} \cap R) \times [r] \to (\overline{x} \cap Q) \times [q]$ where $\varphi_x = \left(\varphi_x^1, \varphi_x^2\right)$. Let the inverse map be $\psi_x = (\psi_x^1, \psi_x^2)$. For all $k \in \{1, \ldots, n\}$, define the sets
	\begin{align*}
		\Omega_k & := \{(a, i, U) : U \in \mcB(M), |U \cap R| = k, i \in [r], a \in U \cap R\} \\
		\Omega_{k-1} & := \{(b, j, V) : V \in \mcB(M), |V \cap R| = k-1, j \in [q], b \in V \cap Q\}.
	\end{align*}
	To count the number of elements in $\Omega_k$ and $\Omega_{k-1}$, we count the number of elements in each set with a fixed $U$ and $V$. Then, we sum over all possible choices of $U$ and $V$. This gives the identity
	\begin{align*}
		|\Omega_k| & = N_k \cdot k \cdot r \\
		|\Omega_{k-1}| & = N_{k-1} \cdot (n-k+1) \cdot q.
	\end{align*}
	I claim that there is one-to-one correspondence between $\Omega_k$ and $\Omega_{k-1}$. We define maps $\varphi_{\downarrow} : \Omega_k \to \Omega_{k-1}$ and $\varphi_{\uparrow} : \Omega_{k-1} \to \Omega_k$ given by 
	\begin{align*}
		\varphi_{\downarrow}(a, i, U) & = (\varphi_1^x(a, i), \varphi_2^x (a, i) , (U \backslash a) \cup \varphi_1^x (a, i)) \\
		\varphi_{\uparrow} (b, j, V) & = (\psi_1^x(b, j), \psi_2^x(b,j), (V \backslash b) \cup \psi_1^x(b, j)).
	\end{align*}
	These two maps are well-defined because of interchangeability of parallel elements in indepenent sets (and in particular bases). From construction, the maps $\varphi_{\downarrow}$ and $\varphi_{\uparrow}$ are two-sided inverses of each other. This gives a one-to-one correspondence between $\Omega_k$ and $\Omega_{k-1}$. From our computation of the cardinalities of these sets, we have that 
	\[
		N_k \cdot k \cdot r = |\Omega_k| = |\Omega_{k-1}| = N_{k-1} (n-k+1) q.
	\]
	This implies that $\widetilde{N}_k = \frac{q}{r} \widetilde{N}_{k-1}$. This proves the theorem. 
\end{proof}
In the subsequent subsection, we prove Stanley's matroid inequality without the regularity condition using the technology of Lorentzian polynomials. 

\subsection{Lorentzian Perspective of the Basis Counting Number} \label{subsection-matroid-lorentzian-stanley-general}

One essential hypothesis in Stanley's matroid inequality was that the matroid $M = (E, \mcB)$ had to be regular. This condition was needed in order to have a unimodular coordinatization. The unimodular coordinatization was at the heart of the mixed discriminant and mixed volume constructions. In this section, we generalize Stanley's matroid inequality by removing the hypothesis of unimodularity using the theory of Lorentzian polynomials. Our proof will be based on the fact that the basis generating polynomial of a matroid is Lorentzian. For simplicity, suppose that the ground set of $M = (E, \mcI)$ is $E = [n]$ and $\rank (M) = r$. Recall that the basis generating polynomial of $M$ is defined as 
\[
	f_M(x_1, \ldots, x_n) = \sum_{B \in \mcB} x^B = \sum_{\substack{1 \leq i_1 < \ldots < i_r \leq n \\ \{i_1, \ldots, i_r\} \in \mcB (M)}} x_{i_1} \ldots x_{i_r}.
\]
For any sequence of subsets $\mathcal{T} := (T_1, \ldots, T_m) \subseteq E$, we can define a modification of the basis generating polynomial which gives us a better handle on Stanley's basis counting numbers. 

\begin{defn}
	Let $\mathcal{T} := (T_1, \ldots, T_m)$ be a sequence of subsets of $E$. Then, consider the polynomial 
	\[
		g_M^\mathcal{T} (y_1, \ldots, y_m) := f_M \left (x_e = \sum_{i : e \in T_i} y_i \right )
	\]
	where in the right hand side, we consider the basis generating polynomial where we replace each instance of the coordinate $x_e$ with the linear form $\sum_{i : e \in T_i} y_i$. 
\end{defn}

For $a_1, \ldots, a_m \geq 0$ satisfying $a_1 + \ldots + a_m = r$ we define the number $N(a_1, \ldots, a_m)$ as the number of ways to pick subsets $Q_i \subset T_i$ with $|Q_i| = a_i$ such that $Q_1 \cup \ldots \cup Q_m$ is a basis of $M$. 
\begin{lem} \label{lem-simplifying-N-to-B}
	For $T_1, \ldots, T_m \subseteq E$ and $a_1, \ldots, a_m \geq 0$ satisfying $a_1 + \ldots + a_m = r$, we have that 
	\[	
		N(a_1, \ldots, a_m) = \frac{B(T_1[a_1], \ldots, T_m[a_m])}{a_1! \ldots a_m!}.
	\]
\end{lem}

\begin{proof}
	Any choice of subsets $Q_i \subseteq T_i$ with $|Q_i| = a_i$ and $Q_1 \cup \ldots \cup Q_r$ a basis of $M$ gives $a_1! \ldots a_m!$ sequences which are included in the count of $B(T_1[a_1], \ldots, T_m[a_m])$. Conversely, every sequence determines subsets $Q_i \subseteq T_i$ with $|Q_i| = a_i$ and $Q_1 \cup \ldots \cup Q_r$ a basis of $M$. It is clear that this is a $1 : a_1! \ldots a_m!$ correspondence. This suffices for the proof of the lemma. 
\end{proof}

\begin{prop}
	For $T_1, \ldots, T_m \subseteq E$, we have that 
	\[	
		g_M^{T_1, \ldots, T_m} (y_1, \ldots, y_m) = \sum_{a_1 + \ldots + a_m = r} \frac{B(T_1[a_1], \ldots, T_m[a_m])}{a_1! \ldots a_m!} \cdot y_1^{a_1} \ldots y_m^{a_m}.
	\]
\end{prop}

\begin{proof}
	By substituting $x_e = \sum_{i : e \in \mathcal{T}_i} y_i$ in the formula for the basis generating polynomial, we get that 
	\begin{align*}
		g_M^{\mathcal{T}}(y_1, \ldots, y_m) & = \sum_{\substack{1 \leq i_1 < \ldots < i_r \leq m \\ \{i_1, \ldots, i_r\} \in \mcB(M)}} \prod_{k = 1}^r \left ( \sum_{i_k \in T_j} y_j \right ) \\
		& = \sum_{\substack{1 \leq i_1 < \ldots < i_r \leq m \\ \{i_1, \ldots, i_r\} \in \mcB(M)}} \left (\sum_{a_1 + \ldots + a_m = r} N^{\{i_1, \ldots, i_r\}}(a_1, \ldots, a_m) \right ) y_1^{a_1} \ldots y_m^{a_m}
	\end{align*}
	where $N^B(a_1, \ldots, a_m)$ is the number of ways to pick subsets $Q_i \subseteq T_i$ with $|Q_i| = a_i$ and $Q_1 \cup \ldots \cup Q_m = B$. In particular, we have that 
	\[	
		\sum_{B \in \mcB(M)} N^B (a_1, \ldots, a_m) = N(a_1, \ldots, a_m). 
	\]
	This allows us to simplify the equation after changing the order of summations: 
	\begin{align*}
		g_M^{\mathcal{T}}(y_1, \ldots, y_m) & = \sum_{a_1 + \ldots + a_m = r} \left ( \sum_{B \in \mcB(M)} N^B (a_1, \ldots, a_m) \right ) y_1^{a_1} \ldots y_m^{a_m} \\
		& = \sum_{a_1 + \ldots + a_m = r} N(a_1, \ldots, a_m) \cdot y_1^{a_1} \ldots y_m^{a_m}.
	\end{align*}
	From Lemma~\ref{lem-simplifying-N-to-B}, we have proven the lemma. 
\end{proof}

\begin{lem}
	For $T_1, \ldots, T_m \subseteq E$, the polynomial $g_M^{T_1, \ldots, T_m}$ is Lorentzian. 
\end{lem}
\begin{proof}
	This follows from Theorem~\ref{basis-generating-polynomial-is-lorentzian-thm} and Proposition~\ref{proposition-properties-of-Lorentzian-polynomials}. 
\end{proof}

We can now prove the main theorem for this section. This theorem will be a generalization of Stanley's matroid inequality in Theorem~\ref{stanley-matroid-inequality-full-generality}. 

\begin{thm} \label{main-thm-for-matroids}
	Let $M = (E, \mcB)$ be any matroid of rank $r$. Let $\mathcal{T} = (T_1, \ldots, T_{r-m})$ be a sequence of subsets in $E$ and let $Q, R \subseteq E$ be subsets. Define the sequence
	\[	
		B_k(\mathcal{T}, Q, R) := B(T_1, \ldots, T_{r-m}, Q[k], R [m-k]).
	\]
	Then the sequence $B_0(\mathcal{T}, Q, R), \ldots, B_m(\mathcal{T}, Q, R)$ is log-concave. 
\end{thm}

\begin{proof}
	Consider the Lorentzian polynomial $g_M^{T_1, \ldots, T_{r-m}, Q, R}$. The result then follows from Proposition~\ref{proposition-log-concavity-property-of-lorentzian-polynomial}. 
\end{proof}

Using Theorem~\ref{main-thm-for-matroids}, we can also remove the regularity hypothesis from Corollary 2.4 in \cite{STANLEY}. 

\begin{cor} \label{cor-ultra-log-concavity-of-some-sequence-related-to-bases}
	Let $M = (E, \mcB)$ be any matroid of rank $n$ and let $T_1, \ldots, T_r, Q, R$ be pairwise disjoint subsets of $E$ whose union is $E$. Fix non-negative integers $a_1, \ldots, a_r$ such that $m = n - a_1 - \ldots - a_r \geq 0$, and for $0 \leq k \leq m$ define $f_k$ to be the number of bases $B$ of $M$ such that $|B \cap T_i| = a_i$ for $1 \leq i \leq r$, and $|B \cap R| = k$ (so $|B \cap Q| = m-k$). Then the sequence $f_0, \ldots, f_m$ ultra-log-concave. 
\end{cor}
	
\begin{proof}
	Let $\mathcal{T} = (T_1[a_1], \ldots, T_r[a_r])$. Then we have $B_k(\mathcal{T}, Q, R) = a_1! \ldots a_r! k! (m-k)! f_k$ where we use the same notation as in Theorem~\ref{main-thm-for-matroids}. Thus, we have that
	\[	
		\frac{f_k}{\binom{m}{k}} = \frac{B_k(\mathcal{T}, Q, R)}{a_1 ! \ldots a_r!}.
	\]
	The ultra-log-concavity of $f_k$ then follows from Theorem~\ref{main-thm-for-matroids}. 
\end{proof}

According to Stanley in \cite{STANLEY}, Theorem~\ref{main-thm-for-matroids} would imply the first Mason conjecture. Indeed, this is the content of Theorem 2.9 in \cite{STANLEY}. We rewrite the proof now for the sake of completeness. 

\begin{thm}[Mason's Conjecture]
	Let $M = (E, \mcI)$ be a matroid of rank $n$. For all $0 \leq k \leq n$, let $I_k$ be the number of independent sets of $M$ of rank $k$. Then $I_k^2 \geq I_{k-1} I_{k+1}$ for all $2 \leq k \leq n-1$. 
\end{thm}

\begin{proof}
	Let $B_n$ be the boolean matroid on $n$ elements. Consider $T^n (B_n + M)$ the level $n$ truncation of the matroid sum $B_n + M$. Let $f_k$ be the number of bases of $T^n(B_n + M)$ which shares $k$ elements with $E(M)$. Then, we have that $f_k = I_k \binom{n}{n-k}$. From Corollary~\ref{cor-ultra-log-concavity-of-some-sequence-related-to-bases}, we have that $I_k$ is log-concave. This suffices for the proof. 
\end{proof}

\begin{remark}
	In \cite{lorentzian-polynomials}, Br\"aden-Huh prove the strongest Mason conjecture using Lorentzian polynomials. Their proof involves proving that the homogeneous multivariate Tutte polynomial of a matroid is Lorentzian. 
\end{remark}

\begin{remark}
	In hindsight, it seems that Stanley would have been unable to prove his matroid inequality for arbitrary matroids using mixed volumes without the hypothesis that the matroid is regular. Indeed, from Remark 4.3 in \cite{lorentzian-polynomials}, the basis generating polynomial of a matroid on $[n]$ is the volume polynomial of $n$ convex bodies precisely when the matroid is regular.
\end{remark}

\chapter{Hodge Theory for Matroids} \label{chap:hodge-theory-for-matroids}

In this chapter we will study cohomology rings associated to matroids. In the general case, our cohomology ring will be a graded $\RR$-algebra of the form $A^\bullet = \bigoplus_{i = 0}^d A^i$ with a top-degree isomorphism: $\deg : A^d \to \RR$. If in addition, the natural pairing $A^i \times A^{d-i} \to A^d \to \RR$ is non-degenerate for all $i$, we will call our graded algebra $A^\bullet$ a \textbf{Poincar\'e duality algebra}. In the finite dimensional case, this will automatically imply that $\dim A^k = \dim A^{d-k}$ for all $k$. Cohomology rings with these properties show up naturally in topology and algebraic geometry. For an overview of such examples, we refer the reader to \cite{Huh2016TropicalGO}. Given a cohomology ring associated to a matroid, we will study when this graded object satisfies Poincar\'e duality, Hard Lefschetz, and Hodge-Riemann relations. If it satisfies all three properties in all degrees ($\leq \frac{d}{2}$) we say that it satisfies the K\"ahler package. For the development of the Hodge theory of matroids, we refer the reader to the papers \cite{AHK,huh-semi-small,ardila2022lagrangian,eur2022stellahedral,Feichtner_2004} where the notions of the Chow ring, intersection cohomology, and conormal Chow ring are studied. Through this development, many difficult and long standing conjectures in combinatorics have been solved by proving that a suitable matroid cohomology satisfies the K\"ahler package. In the next three sections, we define the notions of the graded M\"obius algebra, the Chow ring of a matroid, and the augmented Chow ring of a matroid. We discuss these notions at a surface level, and the sections primarily serve as a place to write down definitions and semi-small decomposition results for future sections (see Section~\ref{sec:future-work}). In subsequent sections, we define the Gorenstein ring associated to the basis generating polynomial of a matroid. It is known from \cite{MNY} that this ring satisfies the Hard Lefschetz property and Hodge-Riemann relations of degree $1$ on the positive orthant. We prove necessary and sufficient conditions for these properties to hold on the boundary of the positive orthant. We also make some progress in proving the complete K\"ahler package. Our progress is summarized in Section~\ref{sec:future-work}.

\section{Cohomology Rings for Matroids} \label{sec:cohomology-rings-for-matroids}
	
In this section, we will review the definitions and properties of three graded commutative algebras associated to matroids: the graded M\"obius algebra, the Chow ring, and the augmented Chow ring. The Chow ring and the augmented Chow ring of a matroid are automatically Poincar\'e duality algebras. The graded M\"obius algebra is in general not a Poincar\'e duality algebra. 

\subsection{Graded M\"obius Algebra}

Let $M$ be a matroid on ground set $E$ and let $\mcL$ be its lattice of flats. For $k \geq 0$, let $\mcL^k(M)$ be the set of flats of rank $k$. For every $k$, we can define the real vector space $\opH^k(M)$ given by 
\[
	\opH^k(M) := \bigoplus_{F \in \mcL^k(M)} \RR y_F
\]
where we have a variable $y_F$ for every flat $F \in \mcL^k(M)$. We can then define a graded multiplicative structure $\opH^k(M) \times \opH^l(M) \to \opH^{k+l}(M)$ where for $F_1 \in \mcL^k(M)$ and $F_2 \in \mcL^l(M)$ we have 
\begin{equation}\label{eqn:multiplicative-structure-on-graded-mobius-algebra}
	y_{F_1} \cdot y_{F_2} = 
	\begin{cases}
		y_{F_1 \vee F_2} & \text{if $\rank_M(F_1) + \rank_M(F_2) = \rank_M(F_1 \vee F_2)$}, \\
		0 & \text{if $\rank_M(F_1) + \rank_M(F_2) > \rank_M(F_1 \vee F_2)$}.
	\end{cases} 
\end{equation}
\begin{defn}
	For every matroid $M$, we define the \textbf{graded M\"obius algebra} to be the graded ring $\opH(M) = \bigoplus_{k \geq 0} \opH^k(M)$ equipped with the multiplicative structure defined in Equation~\ref{eqn:multiplicative-structure-on-graded-mobius-algebra}. 
\end{defn}
For every $e \in E$ in the ground set, we can define a graded ring homomorphism $\theta_e : \opH(M \backslash e) \to \opH(M)$ where $\theta_e (y_F) = y_{\text{clo}_M(F)}$ for all $F \in \mcL(M \backslash e)$. Explicitly, we have that $\clo_M(F)$ is $F$ if $F \in \mcL(M)$ and $F \cup \{e\}$ if $F \notin \mcL(M)$. Since the flats of $M \backslash e$ are of the form $F - \{e\}$ where $F$ is a flat of $M$, this map is well-defined. It is also a degree preserving map since $\rank (S) = \rank (\clo(S))$. It is also easy to see that $\theta_i$ is injective. We prove that $\theta_e$ is a ring homomorphism in Lemma~\ref{lem:theta-is-a-ring-homomorphism}. 

\begin{lem}\label{lem:theta-is-a-ring-homomorphism}
	For every $e \in M$, the map $\theta_e : \opH(M \backslash i) \to \opH(M)$ is a ring homomorphism. 
\end{lem}

\begin{proof}
	Let $F_1 \in \mcL^k(M \backslash e)$ and $F_2 \in \mcL^l(M \backslash e)$ be flats $M \backslash e$. We want to prove that $\theta_e (y_{F_1} \cdot y_{F_2}) = \theta_e(y_{F_1}) \theta_e(y_{F_2})$. Since the multiplicative structure of our ring depends on the structure of our flats, we will prove the homomorphism property in separate cases. 
	\begin{enumerate}[label = (\alph*)]
		\item Suppose that $y_{F_1} \cdot y_{F_2} = 0$, then we have $\rank_M(F_1 \vee F_2) < \rank_M(F_1) + \rank_M(F_2)$.
		\begin{enumerate}[label = (\roman*)]
			\item Additionally, suppose that $F_1, F_2 \notin \mcL(M)$. Then $\theta (y_{F_1}) = y_{F_1 \cup i}$ and $\theta(y_{F_2}) = y_{F_2 \cup i}$. To prove that $\theta_{y_{F_1}} \theta_{y_{F_2}} = 0$, it is enough to prove 
			\[
				\rank_M(F_1 \cup F_2 \cup e) < \rank_M(F_1 \cup e) + \rank_M(F_2 \cup e).
			\]
			If $e \in \clo_M(F_1 \cup F_2)$, then this result follows from $\rank_M(F_1 \vee F_2) < \rank_M(F_1) + \rank_M(F_2)$. If $e \notin \clo_M(F_1 \cup F_2)$, then there is a basis of $F_1 \cup F_2$ such that $I \cup e$ is a basis of $F_1 \cup F_2 \cup e$. We then have 
			\begin{align*}
				\rank_M(F_1 \cup F_2 \cup e) = 1 + |I| \leq 1 + |I \cap F_1| + |I \cap F_2| < \rank_M(F_1) + \rank_M(F_2).
			\end{align*}
			This proves the homomorphism property for $F_1, F_2 \notin \mcL(M)$.

			\item Now, suppose that $F_1 \notin \mcL(M)$ and $F_2 \in \mcL(M)$. Then $\rank (F_1) = \rank (F_1 \cup i)$ and $\rank (F_1 \cup F_2) < \rank (F_1) + \rank (F_2)$. We have that 
			\begin{align*}
				\rank \left((F_1 \cup e) \cup F_2\right) & = \rank (F_1 \cup F_2) \\
				& < \rank (F_1) + \rank (F_2) \\
				& = \rank (F_1 \cup e) + \rank (F_2). 
			\end{align*}
			This proves the homomorphism property for $F_1 \notin \mcL(M)$ and $F_2 \in \mcL(M)$. 

			\item Suppose that $F_1, F_2 \in \mcL(M)$. Then we automatically get the homomorphism property. 
		\end{enumerate}

		\item Now, suppose that $y_{F_1} \cdot y_{F_2} = y_{F_1 \vee F_2}$. Then, we have $\rank (F_1 \vee F_2) = \rank (F_1) + \rank (F_2)$. 
		\begin{enumerate}[label = (\roman*)]
			\item Suppose that $F_1, F_2 \notin \mcL(M)$. Then, we have that $\rank (F_1) = \rank(F_1 \cup e)$ and $\rank (F_2) = \rank (F_2 \cup e)$. This implies that $\rank (F_1 \cup F_2 \cup e) = \rank (F_1 \cup F_2)$. Thus, we have that 
			\begin{align*}
				\rank \left((F_1 \vee e) \vee (F_2 \vee e)\right) & = \rank (F_1 \cup F_2) \\
				& = \rank (F_1) + \rank (F_2) \\
				& = \rank (F_1 \vee e) + \rank (F_2 \vee e). 
			\end{align*}
			This proves the homomorphism property for $F_1, F_2 \notin \mathcal{L}(M)$. 

			\item Suppose that $F_1 \notin \mcL(M)$ and $F_2 \in \mcL(M)$. Then we have that $\rank (F_1 \cup e) = \rank (F_1)$ and $\rank (F_2 \cup e) = 1 + \rank (F_2)$. We also have $\rank (F_1 \cup F_2 \cup i) = \rank (F_1 \cup F_2)$. Thus, we have 
			\begin{align*}
				\rank((F_1 \cup e) \cup F_2 ) & = \rank (F_1 \cup F_2) \\
				& = \rank (F_1) + \rank (F_2) \\
				& = \rank (F_1 \cup e) + \rank (F_2). 
			\end{align*}
			This proves the homomorphism property ofr $F_1 \notin \mcL(M)$ and $F_2 \in \mcL(M)$. 

			\item Finally, suppose that $F_1, F_2 \in \mcL(M)$. Then the homomorphism property follows automatically. 
		\end{enumerate}
	\end{enumerate} 
	This suffices for the proof. 
\end{proof}

\subsection{Chow Ring of a Matroid} \label{sec:chow-ring-matroid}

In this section, we give a brief overview of the Chow ring associated to a matroid. We will not prove many of the claims we make, and refer the reader to \cite{huh-semi-small} for the details. Let $M = (E, \mcI)$ be a loopless matroid. According to \cite{huh-semi-small}, when $M$ is realizable over a field $k$, then the Chow ring of $M$ is isomorphic to the Chow ring of a smooth projective variety over $k$. The Chow ring of a matroid was originally introduced by Feichtner and Yuzvinsky in the paper \cite{Feichtner_2004}. It is defined as a quotient ring of the polynomial ring 
\[	
	\underbar{S}_M := \RR \left [ x_F : F \text{ is a nonempty proper flat of } M \right ].
\]
We can define the two ideals of $\underbar{S}_M$ given by 
\begin{align*}
	\underbar{I}_M & := \left \langle \sum_{i_1 \in F} x_F - \sum_{i_2 \in F} x_F : \text{ for all } i_1, i_2 \in E \right \rangle = \left \langle \sum_{i_1 \notin F} x_F - \sum_{i_2 \notin F} x_F : \text{ for all } i_1, i_2 \in E \right \rangle,  \\
	\underbar{J}_M & := \left \langle x_{F_1} x_{F_2} : \text{ $F_1$ and $F_2$ are incomparable}  \right \rangle.
\end{align*}

\begin{defn} \label{def:chow-ring-matroid}
	For a matroid $M$, we define the \textbf{Chow ring} of $M$ to be the quotient algebra 
	\[
		\underbar{CH} (M) := \frac{\underbar{S}_M}{\underbar{I}_M + \underbar{J}_M}. 
	\]
\end{defn}

The Chow ring of a matroid is a graded algebra where the grading is inherited from the grading on $\underbar{S}_M$. When $M$ has rank $d$, the top dimension of $\underbar{CH} (M)$ is $d-1$. There exists an isomorphism 
\[
	\underbar{deg} : \underbar{CH} (M)^{d-1} \longrightarrow \RR, \quad \prod_{F \in \mathcal{F}} x_F \longmapsto 1
\]
whenever $\mathcal{F}$ is any complete flag of nonempty proper flats. Similar to the graded M\"obius algebra, there exists a natural embedding $\underbar{$\theta$}_i : \underbar{CH}(M \backslash e) \to \underbar{CH}(M)$ which gives $\underbar{CH}(M)$ a $\underbar{CH}(M \backslash i)$-module structure. From \cite{krull-schmidt}, the category of graded $\underbar{CH}(M \backslash i)$ modules is a Krull-Schmidt category. Therefore, there is a unique way to decompose $\underbar{CH}(M)$ into indecomposable $\underbar{CH}(M \backslash i)$ modules. Theorem 1.2 in \cite{huh-semi-small} gives the decomposition in terms of smaller matroids. 

\begin{thm} [Theorem 1.2 in \cite{huh-semi-small}] \label{thm:chow-ring-decomp}
	If $i$ is not a coloop of $M$, there is a decomposition of $\underbar{$\CH$}(M)$ into indecomposable graded $\underbar{$\CH$}(M \backslash i)$-modules such that 
	\[
		\underbar{$\CH$}(M) = \theta_i \left ( \underbar{$\CH$} (M \backslash i) \right ) \oplus \bigoplus_{F \in \underbar{$\mathcal{S}$}_i} x_{F \cup i} \cdot \theta_i \left ( \underbar{$\CH$} (M \backslash i) \right ).
	\]
	Here, $\underbar{$\mathcal{S}$}_i$ consists of all non-empty proper flats $F$ of $E \backslash i$ such that $i$ is a coloop of $F \cup i$. 
\end{thm}

The semi-small decomposition in Theorem~\ref{thm:chow-ring-decomp} provides an avenue to inductively prove that the Chow ring satisfies the K\"ahler package. 

\subsection{Augmented Chow Ring of a Matroid}

The augmented Chow ring of a matroid is defined in \cite{huh-semi-small}. This ring is intimately related to the Chow ring of matroid. The augmented Chow ring is defined to be a quotient ring of the polynomial ring
\[
	\opS_M := \RR \left [x_F, y_i : i \in E, F \text{ a proper flat of } M \right ].
\]
Let $\opJ_M^{(1)}$ be the ideal generated by $x_{F_1} x_{F_2}$ where $F_1$ and $F_2$ are incomparable proper flats. Let $\opJ_M^{(2)}$ be the ideal generated by $y_i x_F$ where $i \in E$ and $F$ is a proper flat not containing $i$. Then, we have the two ideals which are used to define the augmented Chow ring. 
\begin{align*}
	\opI_M & := \left \langle y_i - \sum_{i \notin F} x_F : \text{ for all } i \in E \right \rangle,\\
	\opJ_M & := \opJ_M^{(1)} + \opJ_M^{(2)}.
\end{align*}

\begin{defn} \label{def:augmented-chow-ring-of-matroid}
	For a matroid $M$, we define the \textbf{augmented Chow ring} of $M$ to be the quotient algebra
	\[
		\CH(M) := \frac{\opS_M}{\opI_M + \opJ_M}.
	\]
\end{defn}
Like the Chow ring of a matroid, the augmented Chow ring of a matroid is a graded algebra with the grading inherited from the polynomial ring $\opS_M$. The top dimension of $\CH(M)$ will be the rank of the matroid $M$, and there is an isomorphism $\deg : \CH^d(M) \longrightarrow \RR$. Note that the subring generated by $\{y_i\}_{i \in E}$ is isomorphic to the graded M\"obius algebra. Hence, there is a embedding $\opH(M) \to \CH(M)$ which also gives $\CH(M)$ a $\opH(M)$-module structure. There is also a natural injection $\theta_i : \CH(M \backslash i) \to \CH(M)$ for any $i \in E$ which makes $\CH(M)$ into a $\CH(M \backslash i)$ module. The Krull-Schmidt decomposition of $\CH(M)$ into $\CH(M \backslash i)$ modules when $i$ is not a coloop is given by Theorem 1.5 in \cite{huh-semi-small}.

\begin{thm}[Theorem 1.5 in \cite{huh-semi-small}] \label{thm:augmented-chow-ring-krull-schmidt-decomp}
	If $i$ is not a coloop of $M$, then we can decompose $\CH(M)$ into indecomposable graded $\CH(M \backslash i)$-modules such that 
	\[
		\CH(M) = \theta_i (\CH(M \backslash i)) \oplus \bigoplus_{F \in \mathcal{S}_i} x_{F \cup i} \cdot \theta_i (\CH(M \backslash i)).
	\]
\end{thm}

This decomposition is similar to that of Theorem~\ref{thm:chow-ring-decomp}. This allows us to hope that similar decompositions hold for other cohomology rings associated to matroids. We discuss this point in Section~\ref{sec:future-work} when we discuss possible directions for future research. 

\section{Gorenstein Ring associated to a polynomial}

In this section, we examine a Poincar\'e algebra associated to a homogeneous polynomial. We call this ring the Gorenstein ring associated to a polynomial. As the name suggests, there is such a ring for every homogeneous polynomial $f \in \RR [x_1, \ldots, x_n]$. The case where the polynomial is the basis generating polynomial of a matroid was studied by Toshiaki Maeno and Yasuhide Numata in their paper \cite{MN-gorenstein}. In the case where the polynomial is the volume polynomial of a polytope, the ring was studied in \cite{riemann-roch-thm-for-virtual-polytopes} and \cite{Timorin_1999}. The idea of associating a Poincar\'e duality algebra to a polynomial is an old one due to Macaulay. 

\begin{defn}
	Let $f \in \RR[x_1, \ldots, x_n]$ be a homomgeneous polynoimal and let $S := \RR[\partial_1, \ldots, \partial_n]$ be the polynomial ring of differentials where $\partial_i := \partial_{x_i}$. Let $A_f^\bullet := S / \Ann_S (f)$. Then, we call $A_f^\bullet$ the \textbf{Gorenstein ring} associated to the polynomial $f$. Alternatively, we can view $\RR[x_1, \ldots, x_n]$ as a $\RR[X_1, \ldots, X_n]$ modules where the action is defined by the relation
	\[
		p(X_1, \ldots, X_n) \cdot q(x_1, \ldots, x_n) := p(\partial_1, \ldots, \partial_n) q(x_1, \ldots, x_n). 
	\]
	Then the Gorenstein ring is exactly $\RR[X_1, \ldots, X_n] / \Ann (f)$ where the annihilator is with respect to the $\RR[X_1, \ldots, X_n]$ action. 
\end{defn}

The Gorenstein ring associated to a polynomial has a natural grading with respect to the degree of the differential form. Before we prove that this actually gives $A_f^\bullet$ a graded ring structure, we first prove Lemma~\ref{homogeneous-parts}.

\begin{lem} \label{homogeneous-parts}
	Let $\xi \in \RR[\partial_1, \ldots, \partial_n]$ and $f \in \RR[x_1, \ldots, x_n]$ be a homogeneous polynomial. We can decompose $\xi = \xi_0 + \xi_1 + \ldots$ into its homogeneous parts. If $\xi (f) = 0$, then $\xi_d (f) = 0$ for all $d \geq 0$. 
\end{lem}

\begin{proof}
	Let $d = \deg (f)$. If $\xi_i (f) \neq 0$, then $i \leq d$ and $\xi_i(f)$ is a homomgeneous polynomial of degree $d-i$. Thus, $\xi (f) = \xi_0 (f) + \xi_1(f) + \ldots$ will be the homomgeneous decomposition of the polynomial $\xi(f)$. Since this is equal to $0$, all components of the decomposition are equal to zero. This suffices for the proof. 
\end{proof}

\begin{prop}
	The ring $A_f^\bullet$ is a graded $\RR$-algebra where $A_f^k$ consists of the forms of degree $k$. 
\end{prop}

\begin{proof}
	Let us define $A_f^k$ as in the statement of the lemma. Let $d = \deg (f)$ be the degree of the homomgeneous polynomial. Whenver $k > d$, the ring $A_f^k$ is clearly trivial. From Lemma~\ref{homogeneous-parts}, we have the direct sum decomposition 
	\[
		A_f^\bullet = \bigoplus_{k = 0}^d A_f^k.
	\]
	It is also clear that multiplication induces maps $A_f^r \times A_f^s \to A_f^{r+s}$ for all $r, s \geq 0$.
\end{proof}

Proposition~\ref{chow-ring-is-a-PD-algebra} states that the natural pairing in $A_f^\bullet$ equips the ring with a Poincar\'e duality algebra structure. The proposition follows from Theorem 2.1 in \cite{maeno2009lefschetz}. 
\begin{prop} \label{chow-ring-is-a-PD-algebra}
	Let $f$ be a homogeneous polynomial of degree $d$. Then, the ring $A_f^\bullet$ is a Poincar\'e-Duality algebra. That is, the ring satisfies the following two properties:
	\begin{enumerate}[label = (\alph*)]
		\item $A_f^d \simeq A_f^0 \simeq \RR$;

		\item The pairing induced by multiplication $A_f^{d-k} \times A_f^k \to A_f^d \simeq \RR$ is non-degenerate for all $0 \leq k \leq d$. 
	\end{enumerate}
\end{prop}

In Lemma~\ref{non-degeneracy-definition}, we give an characterization of non-degeneracy. From this characterization, it follows that the Hilbert polynomial of any Poincar\'e duality algebra is palindromic. In other words, $\dim A_f^k = \dim A_f^{d-k}$ for all $k$. 

\begin{lem} \label{non-degeneracy-definition}
	Let $B : V \times W \to k$ be a bilinear pairing between two finite-dimensional $k$-vector spaces $V$ and $W$. Then, any two of the following three conditions imply the third. 
		\begin{enumerate}[label = (\roman*)]
			\item The map $B_V : V \to W^*$ defined by $v \mapsto B(v, \cdot)$ has trivial kernel.
			\item The map $B_W : W \to V^*$ defined by $w \mapsto B(\cdot, w)$ has trivial kernel. 
			\item $\dim V = \dim W$. 
		\end{enumerate}
	\end{lem}

	\begin{proof}
		Condition (i) implies $\dim V \leq \dim W$ and Condition (ii) implies $\dim W \leq \dim V$. Thus (i) and (ii) both imply (iii). Now, suppose that (i) and (iii) are true. Then $B_V$ is an isomorphism between $V$ and $W^*$ (see 3.69 in \cite{axler}). Let $v_1, \ldots, v_n$ be a basis for $V$. Then $B_V(v_1), \ldots, B_V(v_n)$ is a basis of $W^*$. Let $w_1, \ldots, w_n$ be the dual basis in $W$ with respect to this basis of $W^*$. Suppose that $\sum \lambda_i w_i \in \ker B_W$. Then for all $v \in V$, we have 
		\[
			\sum_{i = 1}^n \lambda_i B_V(v)(w) = B \left ( v, \sum_{i = 1}^n \lambda_i w_i \right ) = 0. 
		\]
		By letting $v = v_1, \ldots, v_n$, we get $\lambda_i = 0$ for all $i$. 
	\end{proof}

We say a pairing between finite-dimensional vector spaces is non-degenerate whenever all three conditions in Lemma~\ref{non-degeneracy-definition} hold. As a consequence, we have Corollary~\ref{same-dimensions}.

\begin{cor} \label{same-dimensions}
	Let $f$ be a homogeneous polynomial of degree $d \geq 2$ and let $k$, $0 \leq k \leq d$, by a non-negative integer. Then $\dim_\RR A_f^k = \dim_\RR A_f^{d-k}$. 
\end{cor}

Given a homogeneous polynomial $f \in \RR[x_1, \ldots, x_n]$ of degree $d$, we can define an isomorphism $\deg_f : A_f^d \to \RR$ given by evaluation at $f$. This means that for any differential $d$-form $\xi \in A_f^d$, we have that $\deg_f (\xi) := \xi(f)$. Since $\xi$ and $f$ homogeneous of the same degree, the value of $\xi(f)$ will be a real number. Following the terminology in \cite{AHK}, we give the following definition. 

\begin{defn}
	Let $f$ be a homogeneous polynomial of degree $d$ and let $k \leq d/2$ be a non-negative integer. For an element $l \in A_f^1$, we define the following notions:
	\begin{enumerate}[label = (\alph*)]
		\item The \textbf{Lefschetz operator} on $A_f^k$ associated to $l$ is the map $L_l^k : A_f^k \to A_f^{d-k}$ defined by $\xi \mapsto l^{d-2k} \cdot \xi$. 

		\item The \textbf{Hodge-Riemann form} on $A_f^k$ associated to $l$ is the bilinear form $Q_l^k : A_f^k \times A_f^k \to \RR$ defined by $Q_l^k (\xi_1, \xi_2) = (-1)^k \deg (\xi_1 \xi_2 l^{d-2k})$.

		\item The \textbf{primitive subspace} of $A_f^k$ associated to $l$ is the subspace
		\[
			P_l^k := \{\xi \in A_f^k : l^{d-2k+1} \cdot \xi = 0\} \subseteq A_f^k.
		\]
	\end{enumerate}
\end{defn}

\begin{defn}
	Let $f$ be a homogeneous polynomial of degree $d$, let $k \leq d/2$ be a non-negative integer, and let $l \in A_f^1$ be a linear differential form. We define the following notions:
	\begin{enumerate}[label = (\alph*)]
		\item (Hard Lefschetz Property) We say $A_f$ satisfies $\HL_k$ with respect to $l$ if the Lefschetz operator $L_l^k$ is an isomorphism.

		\item (Hodge-Riemann Relations) We say $A_f$ satisfies $\HRR_k$ with respect to $l$ if the Hodge-Riemann form $Q_l^k$ is positive definite on the primitive subspace $P_l^k$. 
	\end{enumerate}
\end{defn}

We say that a homogeneous polynomial $f$ satisifes $\HL$ or $\HRR$ if the associated ring $A_f^\bullet$ satisfies $\HL$ or $\HRR$. For any $a \in \RR^n$, we can define the linear differential form $l_a := a_1 \partial_1 + \ldots + a_n \partial_n$. We say that $f$ satisfies $\HL$ or $\HRR$ with respect to $a$ if and only if it satisfies $\HL$ or $\HRR$ with respect to $l_a$. Since we have shown that $A_f^\bullet$ automatically satisfies Poincar\'e duality, we say that $f$ satisfies the K\"ahler package for a linear form $l$ if it satisfies $\HL_k$ and $\HRR_k$ with respect to $l$ for all $k \leq \frac{d}{2}$. 

\begin{prop} [Lemma 3.4 in \cite{MNY}] \label{conditions-for-HL-HRR}
	Let $f \in \RR[x_1, \ldots, x_n]$ be a homogeneous polynomial of degree $d \geq 2$ and $a \in \RR^n$. Assume that $f(a) > 0$. Then, 
	\begin{enumerate}[label = (\alph*)]
		\item $A_f$ has $\HL_1$ with respect to $l_a$ if and only $Q_{l_a}^1$ is non-degenerate. 

		\item Suppose that $A_f$ satisfies $\HL_1$. Then $A_f$ has $\HRR_1$ with respect to $l_a$ if and only if $-Q_{l_a}^1$ has signature $(+, -, \ldots, -)$. 
	\end{enumerate}
\end{prop}

\begin{proof}
	We include a proof for completeness. We first prove the statement in (a). Suppose that $A_f$ has $\HL_1$ with respect to $l_a$. We have the following commutative diagram:
	\[\begin{tikzcd}
	{A_f^1 \times A_f^1} && {A_f^1 \times A_f^{d-1}} \\
	& {\mathbb{R}}
	\arrow["{\text{id} \times L_{l_a}^1}", from=1-1, to=1-3]
	\arrow["{-Q_{l_a}^1}"', from=1-1, to=2-2]
	\arrow[from=1-3, to=2-2]
\end{tikzcd}\]
where the missing mapping is multiplication. If $A_f$ has $\HL_1$ with respect to $l_a$, then the top map between $A_f^1 \times A_f^1 \to A_f^1 \times A_f^{d-1}$ is a bijection. Thus the non-degeneracy of $Q_{l_a}^1$ follows from the non-degeneracy of the multiplication pairing as stated in Proposition~\ref{chow-ring-is-a-PD-algebra}. Now, suppose that $Q_{l_a}^1$ is non-degenerate. Then, the map $B : A_f^1 \to (A_f^1)^*$ defined by $\xi \mapsto -Q_{l_a}^1(\xi, \cdot)$ is given by $m(L_{l_a}^1 \xi, \cdot)$ where $m : A_f^1 \to A_f^{d-1} \to \RR$ is the multiplication map. This is the composition of $A_f^1 \to A_f^{d-1} \to (A_f^1)_*$ where the first map is $L_{l_a}^1$ and the second map is injective from the non-degeneracy of the multiplication map. This proves that $L_{l_a}^1$ is injective. From Corollary~\ref{same-dimensions}, the map $L_{l_a}^1$ is an isomorphism. This suffices for the proof of (a). \\

To prove (b), consider the commutative diagram
\[\begin{tikzcd}
	{\mathbb{R}} & {A^0_f} & {A_f^1} & {A_f^{d-1}} & {A_f^d} & {\mathbb{R}}
	\arrow["{\times l_a}", from=1-2, to=1-3]
	\arrow["{L_{l_a}^1}", from=1-3, to=1-4]
	\arrow["{\times l_a}", from=1-4, to=1-5]
	\arrow["\simeq", from=1-1, to=1-2]
	\arrow["\simeq", from=1-5, to=1-6]
	\arrow["L_{l_a}^0"{description}, bend right= 18, from=1-2, to=1-5]
\end{tikzcd}\]
Note that $L_{l_a}^0$ is an isomorphism because 
\[
	\deg L_{l_a}^0 (1) = l_a^d (f)  = d! f(a) \neq 0.
\]
Thus, we have $A_f^1 = \RR l_a \oplus P_l^1$ where the direct sum is orthogonal over the Hodge-Riemann form by definition of the primitive subspace. Now, note that 
\[
	-Q_{l_a}^1(l_a, l_a) = l_a^d (f) = -d! f(a) > 0. 
\]
Thus, the signature of $-Q_{l_a}^1$ is $(+, -, \ldots, -)$ if and only if $Q_{l_a}^1$ is positive definite over the primitive subspace if and only if $A_f$ satisfies $\HRR_1$ with respect to $l_a$. 
\end{proof}

Recall from our definition of Lorentzian polynomials, we know that non-zero Lorentzian polynomials are log-concave at any point in $a \in \RR_{> 0}^n$. When $f(a) > 0$, the polynomial $f$ is log-concave at $a$ if and only if its Hessian has exactly one positive eigenvalue. Hence, for any element in the non-negative orthant, we know that the Hessians of Lorentzian polynomials have at most one positive eigenvalue. This fact and Sylvester's Law of Intertia gives a proof of Lemma~\ref{lorentzian-HL-iff-HRR}.

\begin{lem} \label{lorentzian-HL-iff-HRR}
	If $f \in \RR [x_1, \ldots, x_n]$ is Lorentzian, then for any $a \in \RR_{\geq 0}^n$ with $f(a) > 0$, $A_f^1$ has $\HL_1$ with respect to $l_a$ if and only if $f$ has the $\HRR_1$ with respect to $l_a$. 
\end{lem}

\begin{proof}
	See Lemma 3.5 in \cite{MNY}. 
\end{proof}

\section{Local Hodge-Riemann Relations}

This section illustrates an example of a general inductive technique to prove Hodge-Riemann relations. We define a local version of the Hodge-Riemann relations. If this local version is satisfied, then this will imply that the Hard Lefschetz property will be satisfied. In some situations, this is enough to imply that the original Hodge-Riemann relations are satisfied. We give an example of this inductive process in Lemma~\ref{local-hrr-mechanism}. We take our definition of the local Hodge-Riemann relations from \cite{MNY}. 

\begin{defn}
	A homogeneous polynomial $f \in \RR[x_1, \ldots, x_n]$ of degree $d \geq 2k+1$ satisfies the \textbf{local} $\HRR_k$ with respect to a form $l \in A_f^1$ if for all $i \in [n]$, either $\partial_i f = 0$ or $\partial_i f$ satisfies $\HRR_k$ with respect to $l$. 
\end{defn}

\begin{lem} [Lemma 3.7 in \cite{MNY}] \label{local-hrr-mechanism}
	Let $f \in \RR_{\geq 0}[x_1, \ldots, x_n]$ be a homogeneous polynomial of degree $d$ and $k$ a positive integer with $d \geq 2k+1$, and $a = (a_1, \ldots, a_n) \in \RR^n$. Suppose that $f$ has the local $\HRR_k$ with respect to $l_a$. 
	\begin{enumerate}[label = (\roman*)]
		\item If $a \in \RR_{> 0}^n$, then $A_f$ has the $\HL_k$ with respect to $l_a$. 
		\item If $a_1 = 0, a_2, \ldots, a_n > 0$ and $\{\xi \in A_f^k : \partial_i \xi = 0 \text{ for $i = 2, \ldots, n$}\} = \{0\}$, then $A_f$ has the $\HL_k$ with respect to $l_a$. 
	\end{enumerate}
\end{lem}

With Lemma~\ref{local-hrr-mechanism}, we can inductively prove that all Lorentzian polynomials satisfy $\HRR_1$ with respect to $l_a$ for all $a \in \RR_{> 0}^n$. We can prove this directly for all small Lorentzian polynomials. For an arbitrary Lorentzian polynomial, we know that all of its partial derivatives are Lorentzian. Thus, by induction, all of the partial derivatives satisfy $\HRR_1$. This means that $f$ satisfies the local $\HRR_1$. From lemma~\ref{local-hrr-mechanism}, we know that $f$ satisfies $\HL_1$. But we have shown that $\HL_1$ and $\HRR_1$ are equivalent for Lorentzian polynomials. This gives a proof of Theorem~\ref{lorentzian-satisfies-HRR} using the inductive procedure that we alluded to. 

\begin{thm} [Theorem 3.8 in \cite{MNY}] \label{lorentzian-satisfies-HRR}
	If $f \in \RR[x_1, \ldots, x_n]$ is Lorentzian, then $f$ has $\HRR_1$ with respect to $l_a$ for any $a \in \RR_{> 0}^n$. 
\end{thm}

The next result in Corollary~\ref{partial-independent-implies-hessian} gives a condition for general homogeneous polynomials to satisfy $\HRR_1$ in terms of the signature of its Hessian. This result will hold for any $a \in \RR^n$ satisfying the conditions in the statement.

\begin{cor} \label{partial-independent-implies-hessian}
	Let $f$ be a homogeneous polynomial of degree $d \geq 2$. If $\partial_1 f, \ldots, \partial_n f$ are linearly independent in $\RR[x_1, \ldots, x_n]$ and $f(a) > 0$ for some $a \in \RR^n$, then $A_f$ satisfies $\HRR_1$ with respect to $l_a$ if and only if $\Hess_f|_{x = a}$ has signature $(+, -, \ldots, -)$. 
\end{cor}

\begin{proof}
	Since $\partial_1 f_1, \ldots, \partial_n f_n$ are linearly independent, the partials $\partial_i$ form a basis for $A_f^1$. Thus, the signature of $Q_{l_a}^1$ is actually the signature of matrix of $Q_{l_a}^1$ with respect to the set $\{\partial_1, \ldots, \partial_n\}.$ We have 
	\[
		-Q_{l_a}^1(\partial_i, \partial_j) = \partial_i \partial_j l_a^{d-2} f =  l_a^{d-2} \partial_i \partial_j f = (d-2)! \partial_i \partial_j f(a).
	\]
	This proves that the signature of $-Q^1_{l_a}$ is the same as the signature of $\Hess_f |_{x = a}$.  We are done from Lemma~\ref{conditions-for-HL-HRR}(b).
\end{proof}

\section{The Gorenstein Ring associated to the Basis Generating Polynomial of a Matroid} \label{sec:gorenstein-ring-of-matroid}

In this section, we specialize the Gorenstein ring associated to a polynomial to the Gorenstein ring associated to the basis generating polynomial of a matroid. This graded $\RR$-algebra is another cohomology ring associated to a matroid. Thus, it is interesting study the Hard Lefschetz property and Hodge-Riemann relations on this algebra. The Gorenstein ring associated to the basis generating polynomial of a matroid is intimately related to graded M\"obius algebra. In particular, we can realize the Gorenstein ring associated to the basis generating polynomial as a quotient ring of $\opH(M)$ by quotienting out the kernel of the Poincar\'e pairing.

\begin{defn} \label{defn:basis-cohomology}
	Let $M = (E, \mcI)$ be a matroid. We let $\A(M)$ be the Gorenstein ring associated to the basis generating polynomial of $M$. Explicitly, we define the ring $\opS_M := \RR [X_e : e \in E]$ and the ideal $\Ann_M = \Ann_{\opS_M}(f_M)$ where $f_M$ is the basis generating polynomial of $M$. Then, the ring $\A(M)$ is equal to $\A(M) = \opS_M / \Ann_M$.
\end{defn}

For a matroid $M$, we will prove that the ring $\A(M)$ will depend only on its simplification. This is a reasonable claim because whenever we have two elements $e, f \in E$ which are parallel, the differential $\partial_e - \partial_f$ will be in the annihilator of $f_M$. Indeed, the elements $e$ and $f$ are interchangeable in any independent set. Moreover, if $e \in E(M)$ is a loop then $\partial_e$ is clearly in the annihilator. This claim follows directly from the fact that $\A(M)$ is a quotient ring of the graded M\"obius algebra. Since the graded M\"obius algebra only depends on the lattice of flats which only depends on the simplification of the matroid, it follows that $\A(M)$ should only depend on the simplification of the matroid. However, we want to prove a more delicate isomorphism which tells us the image of linear forms under this isomorphism. This is the content of Theorem~\ref{only-simplification-matters}. Before proving this result, we first describe some common elements in the ideal $\Ann_M$. 

\begin{prop}
	Let $M = (E, \mcI)$ be a matroid. For any subsets $S, T \subseteq E$, we write $S \sim T$ if and only if $\overline{S} = \overline{T}$. Let $\Lambda_M^{(1)}, \Lambda_M^{(2)}$, $\Lambda_M^{(3)}$ be three subsets of $\opS_M$ given by 
	\begin{align*}
		\Lambda_M^{(1)} & := \left \{X_e^2 : e \in E(M) \right \}, \\
		\Lambda_M^{(2)} & := \left \{X^S : S \notin \mcI(M) \right \}, \\
		\Lambda_M^{(3)} & := \left \{X^S - X^T : S, T \in \mcI(M) \text{ such that } S \sim T \right \}.
	\end{align*}
	Let $\Lambda_M = \Lambda_M^{(1)} \cup \Lambda_M^{(2)} \cup \Lambda_M^{(3)}$. Then $\Lambda_M \subseteq \Ann_M$. 
\end{prop}

\begin{proof}
	See Proposition 3.1 in \cite{MN-gorenstein}
\end{proof}

In general, it is not true that the elements in $\Lambda_M$ generate the annihilator $\Ann_M$. In fact, if we let $I(\Lambda_M)$ be the ideal generated by $\Lambda_M$, then $\opS_M / I(\Lambda_M)$ is exactly the graded M\"obius algebra $\operatorname{H}(M)$. The graded M\"obius algebra is in general not even a Poincar\'e duality algebra, hence it cannot be $\A(M)$. We can find an explicit counterexample in \cite{MN-gorenstein}. Consider the matrix given by 
\[
	A = \begin{bmatrix}
		1 & 0 & 0 & 1 & 0 \\
		0 & 1 & 0 & 1 & 1 \\
		0 & 0 & 1 & 0 & 1
	\end{bmatrix}.
\]
The basis of the linear matroid generated by $A$ are $\{123, 125, 134, 135, 145\}$. From Example 3.5 in \cite{MN-gorenstein}, we have that 
\[
	\Ann_M =  I(\Lambda_M \cup (X_1X_3 + X_4X_5 - X_1X_5 - X_3X_4)).
\]
Let $M = (E, \mathcal{I})$ be a matroid and $\widetilde{M}$ be its simplification. We can define the maps $\phi : \opS_M \to \opS_{\widetilde{M}}$ and $\psi : \opS_{\widetilde{M}} \to \opS_M$ by 
\begin{align*}
	\phi (\partial_{x_e}) := \partial_{\overline{x_e}}, \quad \text{and} \quad \psi (\partial_{\overline{x}}) := \frac{1}{|\overline{x}|} \sum_{e \in \overline{x}} \partial_{x_e}.
\end{align*}
and then extending to the whole polynomial ring using the universal property of polynomial rings.

\begin{thm} \label{only-simplification-matters}
	The maps $\phi : S_M \to S_{\widetilde{M}}$ and $\psi : S_{\widetilde{M}} \to S_M$ induce isomorphisms between $A(M)$ and $A(\widetilde{M})$. 
\end{thm}

\begin{proof}
	We first prove that $\phi$ and $\psi$ induce homomorphisms between the rings $\A(M)$ and $\A(\widetilde{M})$. To show that $\phi$ induces a homomorphism, consider the diagram in Equation~\ref{extension-1}.
	\begin{equation} \label{extension-1}
			\begin{tikzcd}
				{S_M} && {S_{\widetilde{M}}} && {A(\widetilde{M})} \\
				\\
				&& {A(M)}
				\arrow["\phi", from=1-1, to=1-3]
				\arrow["{\pi_{\widetilde{M}}}", from=1-3, to=1-5]
				\arrow["{\pi_M}"', from=1-1, to=3-3]
				\arrow["{\exists ! \Phi}"', dotted, from=3-3, to=1-5]
			\end{tikzcd}
	\end{equation}

Let $\xi \in S_M$ be an element satisfying $\xi (f_M) = 0$. We will prove that $\phi(\xi) (f_{\widetilde{M}}) = 0$. In other words, we want to prove that $\Ann_M \subseteq \ker \pi_{\widetilde{M}} \circ \phi$. From Proposition~\ref{homogeneous-parts}, it suffices to consider the case where $\xi$ is homogeneous. Let $e_1, \ldots, e_s$ be representatives of all parallel classes. Then, we have that $M = \overline{e_1} \sqcup \ldots \sqcup \overline{e_s} \cup E_0$ where $E_0$ denotes the loops in $M$. In terms of the basis generating polynomials, we have 
\begin{align*}
	f_{\widetilde{M}}(x_{\overline{e_1}}, \ldots, x_{\overline{e_s}}) & = \sum_{\substack{1 \leq i_1 < \ldots < i_r \leq s \\ \{\overline{e_{i_1}}, \ldots, \overline{e_{i_r}}\} \in \mathcal{B}(\widetilde{M})}} x_{\overline{e_{i_1}}} \ldots x_{\overline{e_{i_r}}} \\ 
	f_M(x_1, \ldots, x_n) & = f_{\widetilde{M}} \left ( y_1, \ldots, y_s \right )
\end{align*}
where for $1 \leq i \leq s$, we define $y_i := \sum_{e \in \overline{e_i}} x_e$. Since $\xi$ is homogeneous of degree $k$, we can write it in the form $\xi = \sum_{\substack{\alpha \subseteq [n] \\ |\alpha| = k}} c_\alpha \partial^\alpha$. Then, we have 
\begin{align} \label{karen}
	\xi (f_M) & = \sum_{\beta \in \mathcal{B}} \xi (x^\beta) = \sum_{\beta \in \mathcal{B}(M)} \sum_{\substack{\alpha \subseteq [n] \\ |\alpha| = k}} c_\alpha \partial^\alpha x^\beta = \sum_{\gamma \in \mathcal{I}_{r-k}(M)} \left ( \sum_{ \substack{\alpha \in \mathcal{I}_k \\ \alpha \cup \gamma \in \mathcal{I}_r(M)}} c_\alpha  \right ) x^\gamma. 
\end{align}

Since $\xi(f_M) = 0$, we know that all of the coefficients on the right hand side of Equation~\ref{karen} are equal to $0$. Thus, we have that
\[
	\sum_{\substack{\alpha \in I_k \\ \alpha \cup \gamma \in \mathcal{I}_r(M)}} c_\alpha = 0, \text{ for all $\gamma \in \mathcal{I}_{r-k}(M)$}.
\]
On the other hand, we have 
\[
	\phi (\xi) = \sum_{\substack{\alpha \subseteq [n] \\ |\alpha| = k}} c_\alpha \prod_{e \in \alpha} \partial_{\overline{e}} = \sum_{\beta \in \mathcal{I}_{k}(\widetilde{M})} \left ( \sum_{\alpha \in \text{fiber}(\beta)} c_\alpha \right ) \partial^\beta. 
\]
We can let this differential act on $f_{\widetilde{M}}$ to get the expression
\begin{align} \label{karen-math}
	\phi(\xi) (f_{\widetilde{M}}) & = \sum_{\gamma \in \mathcal{I}_{r-k}(\widetilde{M})} \left ( \sum_{\substack{\beta \in \mcI_k(\widetilde{M}) \\ \beta \cup \gamma \in \mathcal{B}(\widetilde{M})}} \sum_{\alpha \in \text{fiber}(\beta)} c_\alpha \right ) x^\gamma = \sum_{\gamma \in \mathcal{I}_{r-k}(\widetilde{M})} \left ( \sum_{\substack{\alpha \in \mcI_k(M) \\ \alpha \cup \gamma_0 \in \mathcal{B}(M)}} c_\alpha \right ) x^\gamma = 0.
\end{align}
In Equation~\ref{karen-math}, the independent set $\gamma_0 \in \text{fiber}(\gamma)$ is an arbitrary element in the fiber of $\gamma$. This proves that $\Ann_M \subseteq \ker \pi_{\widetilde{M}} \circ \phi$. Thus, there is a unique ring homomorphism $\Phi : A(M) \to A(\widetilde{M})$ which makes Equation~\ref{extension-1} commute. \\

To prove that $\psi$ induces a ring homomorphism, consider the diagram in Equation~\ref{extension-2}. 
\begin{equation} \label{extension-2}
\begin{tikzcd}
	{S_{\widetilde{M}}} && {S_M} && {A(M)} \\
	\\
	&& {A(\widetilde{M})}
	\arrow["{\pi_{\widetilde{M}}}"', from=1-1, to=3-3]
	\arrow["\psi", from=1-1, to=1-3]
	\arrow["{\pi_M}"', from=1-3, to=1-5]
	\arrow["{\exists! \Psi}"', dotted, from=3-3, to=1-5]
\end{tikzcd}
\end{equation}

Consider a differential $\xi \in S_{\widetilde{M}}$ satisfying $\xi(f_{\widetilde{M}}) = 0$. We can write this as $\xi = \sum_{\alpha \in \mathcal{I}_k (\widetilde{M})} c_\alpha \partial^\alpha$ for some real constants $c_\alpha$. Then, its image under $\psi$ is equal to
\[
	\psi(\xi) = \sum_{\alpha \in \mathcal{I}_k (\widetilde{M})} \frac{c_\alpha}{\prod_{e \in \alpha} |e|}\sum_{\beta \in \text{fiber}(\alpha)} \partial^\beta.
\]
Fix a $\alpha \in \mathcal{I}_k(\widetilde{M})$ and a $\beta \in \text{fiber}(\alpha)$. Since $\partial y_i / \partial x_e = \1_{e \in \overline{e_i}}$, we have  
\begin{align*}
	\partial^\beta f_M (x_1, \ldots, x_n) & = \partial^\beta f_{\widetilde{M}} (y_1, \ldots, y_s) = \partial^\alpha f_{\widetilde{M}} (x_1, \ldots, x_s) |_{x_1 = y_1, \ldots, x_s = y_s} = 0.
\end{align*}
Thus, we have that $\psi (\xi)(f_M) = 0$ and $\Ann_{\widetilde{M}} \subseteq \ker \pi_M \circ \psi$. This proves that there is a unique ring homomrphism $\Psi : A(\widetilde{M}) \to A(M)$ which causes the diagram in Equation~\ref{extension-2} to commute. Since the maps $\Psi$ and $\Phi$ are inverses of each other, they are both isomorphisms. This suffices for the proof.
\end{proof}

From Theorem~\ref{only-simplification-matters}, we get Corollary~\ref{complex-isomorphism} and Corollary~\ref{simplification-of-HRR} immediately. 

\begin{cor} \label{complex-isomorphism}
	Let $M = (E, \mathcal{I})$ be a matroid. For any $a \in \RR^E$, we can define the linear form $l_a := \sum_{e \in E} a_e \cdot \partial_{x_e} \in A^1(M)$. Let $\widetilde{l_a} := \Phi(l_a) = \sum_{e \in E} a_e \cdot \partial_{x_{\overline{e}}} \in A^1(\widetilde{M})$. Then, the following diagram commutes:
	\[\begin{tikzcd}
	\ldots & {A^{i-1}(M)} & {A^i(M)} & {A^{i+1}(M)} & \ldots \\
	\ldots & {A^{i-1}(\widetilde{M})} & {A^i(\widetilde{M})} & {A^{i+1}(\widetilde{M})} & \ldots
	\arrow["{\times l_a}", from=1-1, to=1-2]
	\arrow["{\times \widetilde{l_a}}", from=2-1, to=2-2]
	\arrow["\Phi"', from=1-2, to=2-2]
	\arrow["{\times l_a}", from=1-2, to=1-3]
	\arrow["{\times \widetilde{l_a}}", from=2-2, to=2-3]
	\arrow["{\times l_a}", from=1-3, to=1-4]
	\arrow["{\times \widetilde{l_a}}", from=2-3, to=2-4]
	\arrow["\Phi"', from=1-3, to=2-3]
	\arrow["\Phi"', from=1-4, to=2-4]
	\arrow["{\times l_a}", from=1-4, to=1-5]
	\arrow["{\times \widetilde{l_a}}", from=2-4, to=2-5]
\end{tikzcd}\]
 \end{cor}

\begin{cor} \label{simplification-of-HRR}
	Let $M = (E, \mathcal{I})$ be a matroid. Then $A(M)$ satisfies $\HRR_k$ with respect to $l$ if and only if $A(\widetilde{M})$ satisfies $\HRR_k$ with respect to $\Phi(l)$. 
\end{cor}

From Theorem~\ref{only-simplification-matters}, Corollary~\ref{complex-isomorphism}, and Corollary~\ref{simplification-of-HRR}, when we study Hodge-theoretic properties of $\A(M)$ it is enough to assume that $\widetilde{M}$ is simple. We may not always want to do this, but the translation from a matroid to its simplification is useful nonetheless. Not only do we know that the two rings are isomorphic, but we know how multiplication by $1$-forms translate from one ring to the other. This means that we can relate the Hodge theoretic properties of the two rings to each other. We also understand some properties of $\A(M)$ better when we know that $M$ is simple. For example, the vector space structure of $\A^1(M)$ is well-understood for simple $M$. Satoshi Murai, Takahiro Nagaoka, and Akiko Yazawa prove in \cite{MNY} that if $M$ is a simple matroid on $[n]$, then $\dim A^1(M) = n$. In other words, the spanning set $\partial_1, \ldots, \partial_n$ is a basis of $\A^1(M)$. We write this result in Lemma~\ref{simple-partial-independent}. 

\begin{lem} \label{simple-partial-independent}
	If $M = ([n], \mcI)$ is simple, then $\partial_1, \ldots, \partial_n$ is a basis of $A^1(M)$.
\end{lem}

\begin{proof}
	See Theorem 2.5 in \cite{MNY}
\end{proof}	

From Corollary~\ref{partial-independent-implies-hessian}, this implies that when $M$ is simple, the Hessian of the basis generating polynomial $f_M$ has signature $(+,-, \ldots, -)$ at every point in the positive orthant. Thus, the basis generating polynomial for a simple matroid is strictly log-concave on the positive orthant, which was one of the main results in \cite{MNY}. In the same paper, they formulate Conjecture~\ref{conj:mny}. One of my goals was to work towards a proof of Conjecture~\ref{conj:my-conjecture}. 

\begin{conj} \label{conj:mny}
	Let $M = (E, \mcI)$ be a matroid of rank $d$. The ring $\A(M)$ satisfies $\HL_k$ for some $a \in \RR^E_{> 0}$ for all $k \leq \frac{d}{2}$.  
\end{conj}

\begin{conj} \label{conj:my-conjecture}
	Let $M = (E, \mcI)$ be a matroid of rank $d$. The ring $\A(M)$ satisfies the K\"ahler package for all $a \in \RR^E_{> 0}$.
\end{conj}

Unfortunately, Conjecture~\ref{conj:my-conjecture} is false. The counterexample was given to us through private communication between June Huh and Connor Simpson. We discuss the counterexample in more detail in Section~\ref{sec:future-work}. For the rest of the thesis, we analyze when $\A(M)$ satisfies $\HRR_1$ on the facets of the positive orthant. We will resolve this question completely in Section~\ref{sec:HRR-on-facets}. Before moving on to this question, we first give necessary and sufficient conditions on $\A(M)$ for $\HRR_1$ with respect to $a \in \RR^n$ in the case where $f(a) > 0$ and $M$ is simple.

\begin{cor} \label{simple-matroids-HRR-condition}
	Let $M = (E, \mcI)$ be a simple matroid of rank $r \geq 2$. If $a \in \RR_{\geq 0}^E$ satisfies $f_M(a) > 0$, then $A(M)$ satisfies $\HRR_1$ with respect to $l_a$ if and only if $\Hess_{f_M} |_{x = a}$ is non-singular.
\end{cor} 

\begin{proof}
	This follows immediately from Lemma~\ref{lorentzian-HL-iff-HRR}, Corollary~\ref{partial-independent-implies-hessian}, and Lemma~\ref{simple-partial-independent}.
\end{proof}

\begin{lem} \label{block-matrix-determinant-calculation}
		When $D$ is invertible and $A$ is a square matrix, we have
		\[
			\det \begin{bmatrix} A & B \\ C & D \end{bmatrix} = \det (A - B D^{-1} C) \det (D).
		\]
	\end{lem}
	\begin{proof}
		See Section 5 in \cite{block-matrices}.
	\end{proof}

\begin{thm} \label{thm:simple-inverse-hessian-property}
	Let $M = (E, \mcI)$ be a simple matroid of rank $r \geq 2$. If $a \in \RR_{\geq 0}^E$ satisfies $f_M(a) > 0$ and $a_e = 0$ for some $e \in E$ which is not a co-loop, then $A(M)$ satisfies $\HRR_1$ with respect to $l_a$ if and only if 
	\[
		\left ( \nabla f_{M / e}^{\mathsf{T}} \cdot \Hess^{-1}_{f_{M \backslash e}} \cdot \nabla f_{M / e} \right ) |_{x = a} \neq 0.
	\]
\end{thm}

\begin{proof}
	Without loss of generality, we can assume that $E(M) = [n]$ and $e = n$. In particular, this means that $a = (a_1, \ldots, a_{n-1}, 0) \in \RR^n$ with $a_i \geq 0$ for all $i$. From Corollary~\ref{simple-matroids-HRR-condition}, $\HRR_1$ is satisfied if and only if the Hessian is non-singular. To compute the Hessian at $x = a$, note that because $n$ is a co-loop, we can write the basis generating polynomial as $f_M = x_n f_{M / n} + f_{M \backslash n}$. Using this equation, we see that the Hessian of $f_M$ at $(a_1, \ldots, a_{n-1}, 0)$ is equal to 
	\[
		\Hess_{f_M} = \begin{bmatrix}
			\Hess_{f_{M \backslash n}} & \nabla f_{M/n} \\
			(\nabla f_{M/n})^{\mathsf{T}} & 0
		\end{bmatrix}.
	\]
	Since $M$ is simple, we know that $M \backslash n$ is simple. Thus, the matrix $\Hess_{f_{M \backslash n}}$ is invertible as it has the same signature as the Hodge-Riemann form. From the Lemma~\ref{block-matrix-determinant-calculation}, we have that
	\[
		\det \Hess_{f_M} = \left ( \nabla f_{M / n}^{\mathsf{T}} \cdot \Hess^{-1}_{f_{M \backslash n}} \cdot \nabla f_{M / n} \right ).
	\]
	This suffices for the proof. 
\end{proof}

\section{Hodge-Riemann Relations on the Facets of the Positive Orthant} \label{sec:HRR-on-facets}

In this section, we will prove necessary and sufficient conditions for $\A(M)$ to satisfy $\HRR_1$ on the facets of the positive orthant. From Lemma~\ref{lorentzian-satisfies-HRR}, we already know that $A(M)$ satisfies $\HRR_1$ on $\RR_{> 0}^n$. For a matroid $M = (E, \mathcal{I})$, we can decompose the boundary set of $\RR^{E}_{\geq 0}$ as
\[	
	\bd \RR_{\geq 0}^E = \bigcup_{e \in E} H_e
\]
where $H_e := \{x \in \RR_{\geq 0}^E : x_e = 0 \}$. Before stating our main result in this section, we first prove a few technical lemmas that are necessary for us to apply the inductive procedure modeled in Lemma~\ref{local-hrr-mechanism}. 

\begin{lem} \label{contracting-stays-not-a-coloop}
	Let $M = (E, \mathcal{I})$ be a matroid satisfying $\rank (M) \geq 2$. If $e \in M$ is not a coloop of $M$, then $e$ will not be a co-loop of $M / i$ for any $i \in E \backslash e$. 
\end{lem}

\begin{proof}
	If $e$ or $i$ is a loop, then the statement is vacuously true. Now, suppose that $e$ and $i$ are both not loops. Suppose for the sake of contradiction that $e$ is a coloop of $M/i$. This implies that any basis of $M$ containing $i$ must contain $e$. But, since $e$ is not a coloop of $M$, the matroid $M \backslash e$ has the same rank of $M$. Moreover, since $i$ is independent in $M \backslash e$, there is a basis of $M \backslash e$ which contains $i$. But this is automatically a basis of $M$ that contains $i$ but doesn't contain $e$. This is a contradiction, and suffices for the proof. 
\end{proof}

\begin{thm}[Degree 1 Socles] \label{socle-socle-socle}
	Let $M = (E, \mcI)$ be a matroid satisfying $\rank (M) \geq 3$. Let $S \subseteq E(M)$ be a subset with $\rank (S) \leq \rank (M)-2$. Then 
	\[
		\{\xi \in A_f^1 : \xi (\partial_i f) = 0 \text{ for } i \in E \backslash S\} = \{0\}.
	\]
\end{thm}

\begin{proof}
	Let $r = \rank (M) \geq 3$. We want to prove that if a linear form $\xi = \sum_{e \in E} c_e \cdot \partial_e$ satisfies $\xi (\partial_e f_M) = 0$ for all $e \in E \backslash S$, then we have $\xi (f_M) = 0$. For $i \in E \backslash S$, we have 
	\begin{equation} \label{socle-calculation-1}
		0 = \xi (\partial_i f_M) = \sum_{e \in E} c_e \partial_e \partial_i f_M = \sum_{e \in E} c_e \sum_{\substack{\alpha \in \mcI_{r-2}(M) \\ \alpha \cup \{e,i\} \in \mcI_r (M)}} x^\alpha = \sum_{\alpha \in \mcI_{r-2}(M)} \left ( \sum_{\substack{e \in E \\ \alpha \cup \{i, e\} \in \mcI_r(M)}} c_e \right ) x^\alpha. 
	\end{equation}
	By setting all of the coefficients of the right hand side of Equation~\ref{socle-calculation-1}, we have that 
	\[
		\sum_{\substack{e \in E \\ \alpha \cup \{i, e\} \in \mcI_r (M)}} c_e = 0 \quad \text{for all $\alpha \in \mcI_{r-2}(M)$ and $i \in E \backslash S$}.
	\]
	For any $\beta \in \mcI_{r-1}(M)$, we know that $\beta \not \subseteq S$ since $\rank (\beta) = r-1 > \rank (S)$. There exists some $i \in \beta \backslash S$. Thus, we can write $\beta = \alpha \cup \{i\}$ where $\alpha \in \mcI_{r-2}(M)$ and $i \in E \backslash S$. This proves that for any $\beta \in \mcI_{r-1}(M)$, we have
	\[
		\sum_{\substack{e \in E \\ \beta \cup \{e\} \in \mcI_{r}}} c_e = \sum_{\substack{e \in E \\ \alpha \cup \{i, e\} \in \mcI_{r}}} c_e = 0.
	\]
	Finally, we have that 
	\[
		\xi(f_M) = \sum_{e \in E} c_e \partial_e f_M = \sum_{e \in E} c_e \sum_{\substack{\beta \in \mcI_{r-1} \\ \beta \cup \{e\} \in \mcI_r}} x^\beta = \sum_{\beta \in \mcI_{r-1}} \left ( \sum_{\substack{e \in E \\ \beta \cup \{e\} \in \mcI_r}} c_e \right ) x^\beta = 0.
	\]
	This suffices for the proof. 
\end{proof}

\begin{thm}[Higher Degree Socles] \label{higher-degree-socles}
	Let $M = (E, \mcI)$ be a matroid. Let $S \subseteq E$ be a subset such that $\rank(S) \leq \rank(M) - k - 1$. Then, 
	\[
		\{\xi \in A^k(M) : \xi (\partial_e f_M) = 0 \text{ for all } e \in E \backslash S\} = \{0\}.
	\]
\end{thm}

\begin{proof}
	Any $\xi \in A^k(M)$ can be written as $\xi = \sum_{\alpha \in \mcI_k} c_\alpha \partial^\alpha$. For any $e \in E \backslash S$, we have 
	\begin{align*}
		0 = \xi \partial_i f_M = \sum_{\alpha \in \mcI_k} c_\alpha \partial_e \partial^\alpha f_M = \sum_{\alpha \in \mcI_k} c_\alpha \sum_{\substack{\gamma \in \mcI_{r-k-1} \\ \gamma \cup \alpha \cup \{e\} \in \mcI_r}} x^\gamma = \sum_{\gamma \in \mcI_{r-k-1}} \left ( \sum_{\substack{\alpha \in \mcI_k \\ \gamma \cup \alpha \cup \{e\} \in \mcI_r}} c_\alpha \right ) x^\gamma.
	\end{align*}
	This implies that for all $\gamma \in \mcI_{r-k-1}$ and $e \in E \backslash S$, we have 
	\[
		\sum_{\substack{\alpha \in \mcI_k \\ \gamma \cup \alpha \cup \{e\} \in \mcI_r}} c_\alpha = 0 \quad \text{for all $\gamma \in \mcI_{r-k-1}$ and $e \in E \backslash S$.}
	\]
	Let $\beta \in \mcI_{r-k}$ be an arbitrary independent set of rank $r-k$. Note that $\rank (\beta) = r-k > \rank (S)$. Thus, we cannot have $\beta \subseteq S$. This implies that we can find $e \in \beta \backslash S$ such that $\beta = \alpha \cup \{e\}$ for $e \in E \backslash S$. Thus, 
	\[
		\sum_{\substack{\alpha \in \mcI_k \\ \beta \cup \alpha \in \mcI_r}} c_\alpha = 0 \quad \text{for all $\beta \in \mcI_{r-k}$}.
	\]
	Then, we have 
	\[
		\xi (f_M) = \sum_{\alpha \in \mcI_k} c_\alpha \partial^\alpha f_M = \sum_{\alpha \in \mcI_k} c_\alpha \sum_{\substack{\beta \in \mcI_{r-k} \\ \beta \cup \alpha \in \mcI_r}} x^\beta = \sum_{\beta \in \mcI_{r-k}} \left ( \sum_{\substack{\alpha \in \mcI_k \\ \beta \cup \alpha \in \mcI_r}} c_\alpha \right ) x^\beta = 0,
	\]
	which suffices for the proof. 
\end{proof}

We are now ready to state our main theorem about $\HRR_1$ on the facets of $\RR_{\geq 0}^E$. We study the properties of $\A(M)$ on the relative interiors of the facets $H_e$ whenever $e$ is not a coloop. 
\begin{thm} \label{main-theorem}
	Let $M = (E, \mcI)$ be a matroid which satisfies $\rank (M) \geq 2$. For any $e \in E(M)$, the basis generating polynomial $f_M$ satisfies $\HRR_1$ on $\relint (H_e)$ if and only if $e$ is not a co-loop.
\end{thm}

\begin{proof}
	Without loss of generality, let $M$ be a matroid on the set $[n]$ and let $e = 1$. Then, we want to prove that whenever $a_2, \ldots, a_n > 0$, the ring $A(M)$ satisfies $\HRR_1$ on $a = (0, a_2, \ldots, a_n)$ if and only if $e$ is not a co-loop. We first prove that if $1$ is not a co-loop, then $A(M)$ satisfies $\HRR_1$. To prove this, we induct on the rank of $M$. For the base case $\rank (M) = 2$, Corollary~\ref{simplification-of-HRR} implies that it suffices to prove that $A(\widetilde{M})$ satisfies $\HRR_1$ on $\Phi(l_a)$. The only simple matroid of rank $2$ is the uniform matroid of rank $2$. From Corollary~\ref{simple-matroids-HRR-condition}, it suffices to check that the signature of the Hessian is $(+, -, \ldots, -)$. But the Hessian of a rank $2$ simple matroid at every point is the same matrix. If the matroid is on $n$ elements, the Hessian matrix is 
	\[
		A(K_n) = \begin{bmatrix} 
			0 & 1 & 1 & \ldots & 1 \\
			1 & 0 & 1 & \ldots & 1 \\
			1 & 1 & 0 & \ldots & 1 \\
			\vdots & \vdots & \vdots & \ddots & \vdots \\
			1 & 1 & 1 & \ldots & 0
		\end{bmatrix}.
	\]
	This happens to be the adjancency matrix of a complete graph. From Proposition 1.5 in \cite{Stanley-alg-combo}, this matrix has an eigenvalue of $-1$ with multiplicity $n-1$ and an eigenvalue of $n-1$ with multiplicity $1$. Hence, its signature is $(+, -, \ldots, -)$ which proves the base case. Now suppose that the claim is true for all matroids of rank less than $r$. Let $M$ be a matroid of rank $r$. We want to prove that the claim is true for $M$. By the same reasoning as in the base case, we can assume that our matroid is simple. In the simplification, the codimension of our $1$-form will not increase. For all $i \in E \backslash \{1\}$, we know that $e$ is not a co-loop of $M / i$ from Lemma~\ref{contracting-stays-not-a-coloop}. Thus, the simplified $l_a$ will either be all positive (in which case the claim is known to be true) or $l_a$ will be all positive except possibly at $e$. Since $e$ is known not to be a coloop of $M / i$, the simplification of $l_a$ will be a $1$-form which satisfies the hypothesis in the inductive hypothesis. Since $M$ is simple, we know that $\rank (M / i) = \rank (M) - 1 < \rank (M)$. Hence, from the inductive hypothesis, we know that $A(M/i) = A_{\partial_i f_M}^\bullet$ satisfies $\HRR_1$ for $l_a$. \\

	Now, we have enough information to directly prove that $A(M)$ satisfies $\HRR_1$ with respect to $l_a$. From Lemma~\ref{lorentzian-HL-iff-HRR}, it suffices to prove that $A(M)$ satisfies $\HL_1$ with respect to $l_a$. Since $\dim_\RR A^1(M) = \dim_\RR A^{r-1}(M)$ from properties of Poincar\'e Duality algebras, it suffices to prove that the Lefschetz operator $L_{l_a}^1 : A^1(M) \to A^{r-1}(M)$ is injective. Let $\Xi \in A^1(M)$ be the kernel of $L_{l_a}^1$. Note that this is equivalent to requiring $\Xi l_a^{r-2} = 0$ in $A(M)$. We want to prove that $\Xi = 0$ in $A^1(M)$. Since $\Xi l_a^{r-2} = 0$ in $A(M)$, we have 
	\[
		0 = -Q_{l_a}^1 (\Xi, \Xi) = \deg_M (\Xi^2 \cdot l_a^{r-2}) = \sum_{i = 2}^n a_i \deg_M (\Xi^2 \cdot l_a^{r-3} \cdot \partial_i).
	\]
	Note that 
	\[
		\deg_M (\Xi^2 \cdot l_a^{r-3} \cdot \partial_i) = (\Xi^2 \cdot l_a^{r-3}) (\partial_i f_M) = \deg_{M / i} (\Xi^2 \cdot l_a^{r-3}) = - Q_{M/i} (\Xi, \Xi).
	\]
	where $Q_{M/i}$ is the Hodge-Riemann form of degree $1$ with respect to $l$ associated with $A(M/i)$. Thus, 
	\[
		\sum_{i = 2}^n a_i Q_{M/i}(\Xi, \Xi) = 0.
	\]
	In $A(M/i)$, the linear form $\Xi$ is in the primitive subspace. Since we know that $A(M/i)$ satisfies $\HRR_1$ with respect to $l$, we know that the Hodge-Riemann form $Q_{M/i}$ is negative-definite on $\RR \Xi$. Since $a_i > 0$ for $i = 2, \ldots, n$, this implies that $Q_{M/i}(\Xi, \Xi) = 0$ for all such $i$. Hence $\Xi = 0$ in $A(M/i)$ for all $i \in [2, n]$. In terms of polynomials, this means that $\Xi (\partial_i f_M) = 0$ for all $i \in [2, n]$. From Theorem~\ref{socle-socle-socle}, this implies that $\Xi (f_M) = 0$. Thus, $A(M)$ satisfies $\HRR_1$ with respect to $l$. \\

	For the other direction, we will prove that if $e$ is a co-loop of $M$, then $f_M$ does not satisfy $\HRR_1$ on $\relint (H_e)$. Without loss of generality, we can suppose $E(M) = [n]$ and $e = 1$. The linear differential can be written as $l_a$ where $a = (0, a_2, \ldots, a_n)$ for $a_2, \ldots, a_n \geq 0$. It suffices to consider the case when $M$ is simple because the coefficient of $\partial_1$ in the simplification of $l_a$ will remain $0$. This is because co-loops have no parallel elements. In the simple case, the bottom $(n-1) \times (n-1)$ sub-matrix of $\Hess_{f_M}$ will be entirely $0$. This means that the Hodge-Riemann form will singular and $f_M$ cannot satisfy $\HRR_1$ on $\relint (H_e)$. This suffices for the proof. 
\end{proof}

With a same proof, we also have Theorem~\ref{thm:main-result-general-to-lower-facet}. 

\begin{thm} \label{thm:main-result-general-to-lower-facet}
	Let $M = (E, \mcI)$ be a matroid which satisfies $\rank (M) \geq 2$. For any $S \subseteq E$, if $\rank(S) \leq \rank (M) - 2$ and $S$ contains no co-loops, then $f_M$ satisfies $\HRR_1$ on $\relint (H_S)$. 
\end{thm}

Theorem~\ref{thm:main-result-general-to-lower-facet} in tandem with Theorem~\ref{thm:simple-inverse-hessian-property} immediately gives a somewhat random computational result that under the hypothesis in Theorem~\ref{thm:main-result-general-to-lower-facet}, we have that
\[
	\nabla f_{M / e}^T \cdot \Hess_{f_{M \backslash e}}^{-1} \cdot \nabla f_{M/e} |_{x = a} \neq 0.
\]
Using our result about high degree socles in Theorem~\ref{higher-degree-socles}, we can prove an inductive procedure for local $\HRR_k$ in Theorem~\ref{thm:inductive-procedure} for points $a \in \RR_{\geq 0}^E$ whose support is large enough. 
\begin{thm} \label{thm:inductive-procedure}
	Let $M = (E, \mcI)$ be a matroid and let $S \subseteq E$ be a subset such that $\rank (S) \leq \rank (M) - k - 1$. Let $l := \sum_{i \in E \backslash S} a_i \cdot e_i$ where $a_i > 0$. If $\partial_e f_M$ is $0$ or satisfies $\HRR_k$ and with respect to $l$ for all $e \in E \backslash S$, then $A(M)$ satisfies $\HL_k$ with respect to $l$. 
\end{thm}

\begin{proof}
	Note that $\partial_e f = 0$ if and only if $e$ is a loop. In this case, the variable $x_e$ doesn't appear in the basis generating polynomial. Hence, without loss of generality, we can suppose that $M$ is loopless and $\partial_e f \neq 0$ for all $e \in E$. Since $A(M)$ is a Poincar\'e duality algebra, it suffices to prove for $\Xi \in A^k(M)$ that if $\Xi l^{d-2k} = 0$ in $A^{r-k}(M)$, then $\Xi = 0$ in $A^k(M)$. We know that $\Xi$ is in the primitive subspaces of $\partial_e f$ for all $e \in E$ and we can compute the formula
	\[
		0 = Q^k (\Xi, \Xi) = \sum_{i \in E \backslash S} a_i Q_{\partial_i f} (\Xi, \Xi).
	\]
	From the positive definiteness on the primitive subspaces, we have $\Xi = 0$ in $A^k_{\partial_i f}$ for all $i \in E\backslash S$. From Theorem~\ref{higher-degree-socles}, we get $\Xi = 0$ in $A^k(M)$. This suffices for the proof. 
\end{proof}

From Theorem~\ref{thm:inductive-procedure}, it is enough to show that if $\A(M)$ satisfies $\HRR_i$ for $1 \leq i \leq k-1$ and $\HL_i$ for $1 \leq i \leq k$, then it satisfies $\HRR_k$. Under these conditions, satisfying $\HRR_k$ is equivalent to a statement about the net signature of the Hodge-Riemann form. We define the notion of net signature in Definition~\ref{def:net-signature}. 

\begin{defn} \label{def:net-signature}
	Let $B : V \times V \to k$ be a symmetric bilinear form on a finite dimensional vector space $V$. Suppose that the signature of $B$ has $n_+$ positive eigenvalues and $n_-$ negative eigenvalues. Then, we define the \textbf{net signature} to be $\sigma(B) = n_+ - n_-$. 
\end{defn}

When our bilinear form $B : V \times V \to k$ is non-degenerate, then the net signature $\sigma (B)$ determines the exact signature of the form. Indeed, in the non-degenerate case, we have $n_+ - n_- = \sigma(B)$ and $n_+ + n_- = \dim V$. 
\begin{lem} \label{formula-for-dimension-when-HRR-HL-are-satisfied}
	Let $A(M)$ satisfies $\HRR_i$ and $\HL_i$ with respect to $l$ for $1 \leq i \leq k$, then, we have 
	\[
		\sigma \left ( (-1)^kQ_l^k \right ) = \sum_{i = 0}^k (-1)^i (\dim A^i(M) - \dim A^{i-1}(M)).
	\]
\end{lem}

\begin{proof}
	We induct on $k$. For the base case, we have $k = 1$ and the claim follows from Proposition~\ref{conditions-for-HL-HRR}. Suppose that the claim holds for $k-1$. Consider the composition of maps given by the following commutative diagram:
	\begin{equation} \label{important-composition}
		\begin{tikzcd}
			{A^{k-1}(M)} && {A^k(M)} && {A^{d-k}(M)} && {A^{d-k+1}(M)}
			\arrow["{\times l_a}", from=1-1, to=1-3]
			\arrow["{\times l_a^{d-2k}}", from=1-3, to=1-5]
			\arrow["{\times l_a}", from=1-5, to=1-7]
			\arrow["{\psi_a}", bend right = 30, from=1-3, to=1-7]
		\end{tikzcd}
	\end{equation}
	This diagram exhibits an isomorphism between $A^{k-1}(M)$ and $A^{d-k+1}(M)$ from $\HL_{k-1}$. This implies that we can decompose $A^k(M)$ into 
	\[
		A^k(M) = l \cdot A^{k-1}(M) \oplus \ker \psi_a
	\] 
	where the two summands in the direct sum are orthogonal with respect to the Hodge-Riemann form $Q_l^k$. Let $u_1, \ldots, u_m \in A^{k-1}(M)$ be a basis for $A^{k-1}(M)$. Then, $l \cdot u_1, \ldots, l \cdot u_m$ is a basis for $A^k(M)$ and
	\[
		(-1)^{k} Q_{l}^k (l \cdot u_i, l \cdot u_j) = (-1)^{k-1} Q_{l}^{k-1} (u_i, u_j). 
	\]
	Thus, the signature of $Q_{l}^k$ on $l \cdot A^{k-1}(M)$ should be the negative of the signature of $Q_{l}^{k-1}$ on $A^{k-1}(M)$. Since $A(M)$ satisfies $\HRR_k$, we know that the signature of $Q_l^k$ on $\ker \psi$ is $\dim \ker \psi$. This gives us the formula
	\begin{align*}
		\sigma ((-1)^k Q_{l_a}^k) & = \sigma \left((-1)^{k-1} Q_{l_a}^{k-1}\right) + (-1)^k (\dim A^k(M) - \dim A^{k-1}(M)) \\
		& = \sum_{i = 0}^k (-1)^i (\dim A^i(M) - \dim A^{i-1}(M))
	\end{align*}
	from the inductive hypothesis. This suffices for the proof. 
\end{proof}
\begin{lem} \label{sufficient-conditions-for-higher-HRR}
	Let $k \geq 2$ and $A(M)$ satisfies $\HRR_{i}$ and $\HL_{i}$ with respect to $l = l_a$ for some $a \in \RR^n$ for all $1 \leq i \leq k-1$. Then
	\begin{enumerate}[label = (\alph*)]
		\item $A(M)$ satisfies $\HL_k$ with respect to $l$ if and only if $Q_l^k$ is non-degenerate on $A^k(M)$. 

		\item Suppose that $\HL_k$ is satisfied. Then $A(M)$ satisfies $\HRR_k$ with respect to $l$ if and only if 
		\[
			\sigma ((-1)^kQ_l^k) = \sum_{i = 0}^k (-1)^i (\dim A^i(M) - \dim A^{i-1}(M)).
		\]
	\end{enumerate}
\end{lem}

\begin{proof}
	The argument for (a) is exactly the same as the argument for (a) in Proposition~\ref{conditions-for-HL-HRR}. For part (b), consider the composition in Equation~\ref{important-composition}. Recall from the proof of Lemma~\ref{formula-for-dimension-when-HRR-HL-are-satisfied}, we have that 
	\[
		\sigma ((-1)^k Q_{l_a}^k) = \sigma \left((-1)^{k-1} Q_{l_a}^{k-1}\right) + \sigma \left((-1)^k Q_{l_a}^k |_{\ker \psi_a}\right).
	\]
	Since $A(M)$ satisfies $\HRR_k$ if and only if $Q_{l_a}^k$ is positive definition on $\ker \psi_a$, we know that $A(M)$ satisfies $\HRR_k$ if and only if $\sigma (Q_{l_a}^k) = \dim \ker \psi_a = \dim A^k(M) - \dim A^{k-1}(M)$. Then Lemma~\ref{formula-for-dimension-when-HRR-HL-are-satisfied} completes the proof.  
\end{proof}

In the situation where we know that $\A(M)$ satisfies $\HRR_i$ for $1 \leq i \leq k-1$ and $\HL_i$ for $1 \leq i \leq k$, we already know that $\HRR_k$ is equivalent to a statement about the signature of the Hodge Riemann form. Note that as a function of the point $a \in \RR^E$ in $l_a$, the Hodge-Riemann form is continuous. In particular, the eigenvalues of the Hodge-Riemann form are continuous functions. This property will allow us to prove Lemma~\ref{one-element-is-enough}. This result tells us that it is enough to prove that $\HRR_k$ and $\HL_k$ holds on at least one boundary point. 

\begin{lem} \label{one-element-is-enough}
	Let $\Omega \subseteq \RR^n$ be a subset such that for all $x \in \Omega$, $A(M)$ satisfies $\HRR_i$ for $i \leq k-1$ and $\HL_i$ for $i \leq k$. Suppose that there is a continuous path $\gamma : [0, 1] \to \RR^n$ with image $\varphi ([0, 1]) \subseteq \Omega$, suct that $A(M)$ satisfies $\HRR_k$ with respect to $l_{\gamma(0)}$. Then $A(M)$ satisfies $\HRR_k$ with respect to $l_{\gamma(1)}$. 
\end{lem}

\begin{proof}
	Let $\lambda_i(a)$ be the $i$th largest eigenvalue of $Q_{l_a}^k$ as a function in $a$ for $1 \leq i \leq \dim A^k(M)$. Then for all $1 \leq i \leq \dim A^k(M)$, the $i$th eigenvalue $\lambda_i(a)$ is a continuous function in $a$. Along the path $\gamma$, we know that $Q_{l_a}^k$ is non-degenerate. Hence, none of the functions $\lambda_i(a)$ cross zero. This implies that on the path, the signature $\sigma \left((-1)^k Q_{l_a}^k\right)$ remains constant. From Lemma~\ref{sufficient-conditions-for-higher-HRR}(b), this completes the proof.
\end{proof}

\section{Future Work} \label{sec:future-work}

As noted in the thesis, the Conjecture~\ref{conj:my-conjecture} was recently to be found erroneous. However, we will still describe our thought process in our attempt to prove it. We first proved Theorem~\ref{thm:main-result-general-to-lower-facet}. This describes the exact conditions needed for $\A(M)$ to satisfy $\HRR_1$ on the faces of the positive orthant. Theorem~\ref{thm:inductive-procedure} tells us that for general $k$, we have a mechanism from going from local $\HRR_k$ to $\HL_k$ on the positive orthant and possibly its boundary depending on some dimension conditions. This means that to prove the general conjecture, it suffices to prove that we can go from $\HRR_i$ for $1 \leq i \leq k-1$ and $\HL_i$ for $1 \leq i \leq k$ to $\HRR_k$. In Lemma~\ref{one-element-is-enough}, we further reasoned that given $\HRR_i$ for $1 \leq i \leq k-1$ and $\HL_i$ for $1 \leq i \leq k$ as the inductive hypothesis, it suffices to prove that $\HRR_k$ is satisfied on a suitable boundary point. The reason why we might want to pick a boundary point even though it may seem more accessible is because we may hope for a semi-small decomposition for $\A(M)$ as in Theorem~\ref{thm:chow-ring-decomp} and Theorem~\ref{thm:augmented-chow-ring-krull-schmidt-decomp}. Indeed, if we were to have such a decomposition, we could reduce $\HRR_k$ on the boundary for $\A(M)$ to $\HRR_k$ for $\A(M \backslash i)$. The conjecture would then follow from a more elaborate induction argument. Before we can contemplate such an argument, there are a few questions we must answer. 
\begin{question}
	Does there exist a graded ring homomorphism $\theta : \A(M \backslash i) \to \A(M)$ for every $i \in E$? 
\end{question}
In order to have a decomposition from which we can extract inductive information about the Hodge structures, we must write $\A(M)$ as the direct sum of $\A(M \backslash i)$ submodules. Even before this, we would require that $\A(M)$ be a $\A(M \backslash i)$-module. Note that there is a well-defined surjection $\Theta_M : \operatorname{H}(M) \to \A(M)$ between the graded M\"obius algebra and the ring $\A(M)$ defined by 
\[
	\Theta_{M}(y_F) = x^I, \quad \text{ where $I$ is a basis of $F$}.
\]
A reasonable guess for an injection $\A(M \backslash i) \to \A(M)$ would be the map which makes the diagram in Equation~\ref{eqn:mobius-basis-ring-injection} commute. 
\begin{equation} \label{eqn:mobius-basis-ring-injection}
	\begin{tikzcd}
	{\operatorname{H}(M\backslash i)} && {\operatorname{H} (M)} \\
	\\
	{\operatorname{A}(M\backslash i)} && {\operatorname{A}(M)}
	\arrow[from=1-1, to=1-3]
	\arrow[from=1-3, to=3-3]
	\arrow[from=1-1, to=3-1]
	\arrow[from=3-1, to=3-3]
\end{tikzcd}
\end{equation}
This map is forced to send the equivalence class of $\A(M \backslash i)$ represented by a polynomial $f$ to the equivalence class of $\A(M)$ represented by the same polynomial. Unfortunately, it is not clear that this map is well-defined. In order to be well-defined, Question~\ref{question:question2} must have an affirmative answer. 

\begin{question} \label{question:question2}
	Let $M$ be a matroid on the ground set $[n]$. Let $\xi \in \RR[\partial_1, \ldots, \partial_{n-1}]$ such that $\xi (f_{M \backslash n}) = 0$. Is it true that $\xi (f_M) = 0$? 
\end{question}

In the case where $n$ is a coloop, the answer to Question~\ref{question:question2} is Yes. When $n$ is not a coloop, we can write $f_M = x_n f_{M / n} + f_{M \backslash n}$. In this case, Question~\ref{question:question2} becomes equivalent to Question~\ref{question:question3}. 

\begin{question} \label{question:question3}
 	Let $M$ be a matroid and suppose that $e \in E(M)$ is not a coloop. Is it the case that $\Ann (f_{M \backslash e}) \subseteq \Ann (f_{M / e})$? 
\end{question}

At this time, we have not found a counter-example to Question~\ref{question:question3}. Note that the operation $M \backslash e \to M / e$ is called a \textbf{matroid quotient}. Assuming that the answers to all of these questions are Yes, then a decomposition of $\A(M)$ may possibly descend from a decomposition of $\operatorname{H}(M)$. Even if the mathematics doesn't work in this way, the question of having a semi-small decomposition of $\operatorname{H}(M)$ is an interesting question. We know that $\opH(M)$ is a $\opH(M \backslash i)$ module. From the Krull-Schmidt theorem, there is some unique $\opH(M \backslash i)$-module decomposition of the form
\[
	\opH(M) = S_1 \oplus \ldots \oplus S_m
\]
where the $S_i$'s are indecomposable $\opH(M \backslash i)$-modules. If we look at the $0$-degree parts of these modules, we know that $H^0(M) \cong \RR$. Thus, all but one of the $S_i$'s must have trivial $0$-degree parts. This means that $1 \in S_i$ for some $i \in [m]$. Since they are $\opH(M \backslash i)$-modules, this means that $\opH (M \backslash i) \subseteq S_i$ for some $i \in [m]$. This gives us some idea on how a semi-small decomposition $\opH(M)$ could be constructed. However, we discovered that $\opH(M)$ does not necessary have a semi-small decomposition. Indeed, consider the matroid that comes from the affine configuration of $\{A, B, C, D, E\}$ in Figure~\ref{fig:indecomposable}. One can calculate that as $\opH (M \backslash A)$ modules, the ring $\opH(M)$ is indecomposable. This does not bode well for the possibility for a general semi-small decomposition. 
\begin{figure}[h]
	\begin{center}
		\includegraphics[scale = 0.5]{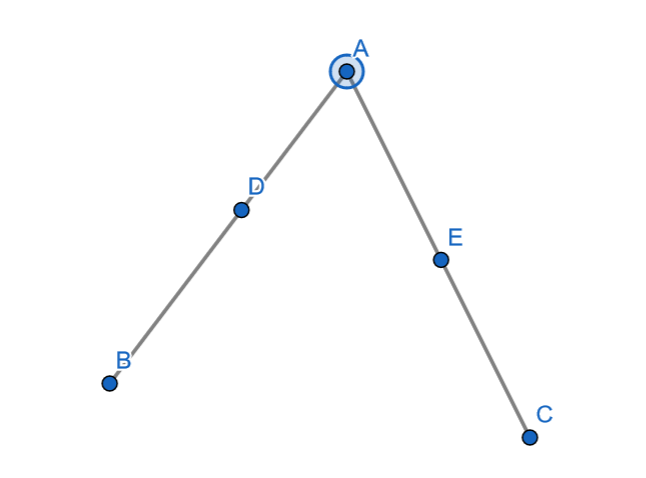}
		\caption{$\opH(M)$ is indecomposable}
		\label{fig:indecomposable}
	\end{center}
\end{figure}
However, this may be a result of picking a bad point to delete. Perhaps, there always exists a point that we can delete to give a nice decomposition. Another possibility is that there may be a decomposition in low dimensions ($\leq \frac{d}{2}$). For example, one conjecture is that of Question~\ref{question:question4}, which asks about the equivalence of $\A(M)$ and $\opH(M)$ in small dimensions. 

\begin{question} \label{question:question4}
	Is it true that $\opH^k(M) \to \A^k(M)$ is an isomorphism when $k \leq \frac{d}{2}$?
\end{question}

For any graded ring $R$ of top dimension $d$, we let $D^k R$ be the graded ring of dimension $k$ where we annihilate all elements of degree greater than $k$. Question~\ref{question:question5} asks if there is a possibility of a semi-small decomposition of $\opH(M)$ for a small enough $k$? 

\begin{question} \label{question:question5}
	Is $D^k\opH(M \backslash i)$ a summand in the Krull-Schmidt decomposition of $D^k \opH(M)$ for $k \leq \frac{d}{2}$? 
\end{question}

However, much of this strategy will not work because of a counter-example to Conjecture~\ref{conj:my-conjecture} that was provided to us by Connor Simpson. This counter-example also gives a negative answer to Question~\ref{question:question4}. Let $G = K_{2, 3}$ be the complete bipartite graph on $2+3$ vertices. Let $M = M(G)$ be the graphic matroid of rank $4$ with $G$ as the underlying graph. Then, $M$ has $15$ flats of rank $2$. The pairing on $\opH^2(M)$ has $6$ positive eigenvalues, $6$ negative eigenvalues, and $3$ zero eigenvalues. The zero eigenvalues implies that the answer of Question~\ref{question:question4} is \textbf{no}. The negative eigenvalues gives the contradiction to Conjecture~\ref{conj:my-conjecture}. However, there is still a possibility for Conjecture~\ref{conj:mny} to be true. If it were, this would provide an example of a polynomial algebra for which Hard Lefschetz holds but in general the Hodge-Riemann relations do not.

\chapter{Appendix}

\section{Brunn-Minkowski and the Base Case of the Alexandrov-Fenchel Inequality}

\begin{prop} \label{base-case-for-AF-inequality}
	Let $K, L \subseteq \RR^2$ be two convex bodies in the plane. Then $\mathsf{V}_2(K, L)^2 \geq \mathsf{V}(K, K) \mathsf{V}(L, L)$. 
\end{prop}

\begin{proof}
	From Theorem~\ref{brunn-minkowski}, we have $\sqrt{\Vol_2(K + L)} \geq \sqrt{\Vol_2(A)} + \sqrt{\Vol_2(B)}$. Squaring both sides, we get the equivalent inequality
	\[	
		\Vol_2(K+L) - \Vol_2(K) - \Vol_2(L) \geq 2 \sqrt{\Vol_2(K) \Vol_2(L)} = 2 \sqrt{\mathsf{V}_2(K, K) \mathsf{V}_2(L, L)}.
	\]
	From Theorem~\ref{mixed-volume-polynomial-expansion-FINAL}, the left hand side is equal to 
	\[		
		\Vol_2(K+L) - \Vol_2(K) - \Vol_2(L) = 2 \mathsf{V}_2(K, L).
	\]
	This suffices for the proof. 
\end{proof}

\section{Computations in the Kahn-Saks Inequality}

Let $(P, \leq)$ be a finite poset and fix two elements $x, y \in P$. For the sake of simplicity, we can assume that $x \leq y$. In the proof of Theorem~\ref{kahn-saks}, we defined special cross-sections of the order polytope for every $\lambda \in [0, 1]$ given by
\[
	K_\lambda := \{t \in \mcO_P : t_y - t_x = \lambda\}.
\]
We give a proof of Lemma~\ref{cross-section-mixed-lemma-computation-that-wasnt-in-kahn-saks}. This result is important in the proof of Theorem~\ref{kahn-saks} as it gives a way to interpret the volume of $K_\lambda$ in terms of mixed volumes. The proof of Lemma~\ref{cross-section-mixed-lemma-computation-that-wasnt-in-kahn-saks} was stated in \cite{balancing-poset-extensions} but was not proven. We include the proof for the sake of completeness. 

\begin{lem} \label{cross-section-mixed-lemma-computation-that-wasnt-in-kahn-saks}
	For $\lambda \in [0, 1]$ we have that $K_\lambda = (1-\lambda) K_0 + \lambda K_1$. 
\end{lem}

\begin{proof}
	For all $\lambda \in \RR$, we can define the hyperplanes $H_\lambda = \{t \in \RR^n : t_y - t_x = \lambda\}$. Then $K_\lambda = H_\lambda \cap \mcO_P$. It is a simple computation to prove that $H_\lambda = (1-\lambda) H_0 + \lambda H_1$. For any linear extension $\sigma \in e(P)$, we define the polytopes:
	\begin{align*}
		\Delta_\sigma & := \{0 \leq t_{\sigma^{-1}(1)} \leq \ldots \leq t_{\sigma^{-1}(n)} \leq 1\} \subseteq \mcO_P \\
		\Delta_\sigma (\lambda) & := \{t \in \Delta_\sigma: t_y - t_x = \lambda\} = K_\lambda \cap \Delta_\sigma = H_\lambda \cap \Delta_\sigma.
	\end{align*}
	Then, we can decompose $K_\lambda$ into 
	\[
		K_\lambda = H_\lambda \cap \mcO_P = H_\lambda \cap \bigcup_{\sigma \in e(P)} \Delta_e = \bigcup_{\sigma \in e(P)} \Delta_\sigma(\lambda).
	\]
	where the polytopes in the unions are disjoint up to a set of measure zero. Now, note that we have 
	\begin{align*}
		(1-\lambda) K_0 + \lambda K_1 & \subseteq (1-\lambda) \mcO_P + \lambda \mcO_P = \mcO_P \\
		(1-\lambda)K_0 + \lambda K_1 & \subseteq (1-\lambda) H_0 + \lambda H_1 = H_\lambda.
	\end{align*} 
	Thus, we have that $(1-\lambda) K_0 + \lambda K_1 \subseteq \mcO_P \cap H_\lambda = K_\lambda$. It suffices to prove the opposite inclusion. Let $t \in K_\lambda$ be an arbitrary point. From our decomposition of $K_\lambda$ into polytopes $\Delta_\sigma (\lambda)$, we know that there is some linear extension $\sigma$ satisfying $\sigma(x) < \sigma(y)$ and $t \in \Delta_\sigma (\lambda)$. The linear extension $\sigma$ induces a total order $<_\sigma$ on $P$ defined by $z_1 <_\sigma z_2$ if and only if $\sigma(z_1) < \sigma(z_2)$. In particular, when $\omega_1 \geq_\sigma \omega_2$, we have that $t_{\omega_1} \geq t_{\omega_2}$. Now, we can define two points $P_0, P_1 \in \RR^n$ so that for all $\omega \in P$, we have 
	\begin{align*}
		(P_0)_\omega & = \begin{cases}
			\frac{t_\omega}{1-\lambda} & \text{if $\omega \leq_\sigma x$} \\
			\frac{t_x}{1-\lambda} & \text{if $x <_\sigma \omega <_\sigma y$} \\
			\frac{t_\omega - \lambda}{1-\lambda} & \text{if $\omega \geq_\sigma y$}.
		\end{cases} \\
		(P_1)_\omega & = \begin{cases}
			0 & \text{if $\omega \leq_\sigma x$} \\
			\frac{t_\omega - t_x}{\lambda} & \text{if $x <_\sigma \omega <_\sigma y$} \\
			1 & \text{if $\omega \geq y$}.
		\end{cases}
	\end{align*}
	Then $P_0 \in K_0$, $P_1 \in K_1$, and $t = (1-\lambda) P_0 + \lambda P_1$. Since $t$ was arbitrary, this completes the proof that $K_\lambda = (1-\lambda) K_0 + \lambda K_1$.
\end{proof}

\begin{lem} \label{computation-of-mixed-volume-in-kahn-saks-case}
	For $i \in \{1, \ldots, n-1\}$, we have that $N_i = (n-1)! \mathsf{V}_{n-1}(K_0[n-i], K_1[i-1])$. 
\end{lem}
\begin{proof}
	From Theorem~\ref{mixed-volume-polynomial-expansion} and Lemma~\ref{cross-section-mixed-lemma-computation-that-wasnt-in-kahn-saks}, we have that 
	\begin{align*}
		\Vol_{n-1}(K_\lambda) & = \Vol_{n-1}  ( (1-\lambda) K_0 + \lambda K_1 ) \\
		& = \sum_{j = 0}^{n-1} \binom{n-1}{j} \mathsf{V}_{n-1}(K_0[n-j-1], K_1[j]) (1-\lambda)^{n-j-1} \lambda^j.
	\end{align*}
	Alternatively, we can compute the volume $\Vol_{n-1}(K_\lambda)$ as 
	\begin{align*}
		\Vol_{n-1} (K_\lambda) & = \Vol_{n-1} \left ( \bigcup_{\substack{\sigma \in e(P) \\ \sigma(x) < \sigma(y)}} \Delta_\sigma (\lambda) \right ) \\
		& = \sum_{\substack{\sigma \in e(P) \\ \sigma(x) < \sigma(y)}} \Vol_{n-1}(\Delta_\sigma (\lambda)) \\
		& = \sum_{k = 1}^{n-1} \sum_{\substack{\sigma \in e(P) \\ \sigma(y) - \sigma(x) = k}} \Vol_{n-1}(\Delta_\sigma(\lambda)).
	\end{align*}
	To compute $\Vol_{n-1}(\Delta_\sigma (\lambda))$, let $P = \{z_1, \ldots, z_n\}$ with $x = z_i$, $y = z_j$ with $\sigma(z_k) = k$ for $k \in [n]$. In this notation, we have the explicit description of $\Delta_\sigma (\lambda)$ as 
	\[
		\Delta_\sigma (\lambda) = \{t \in \RR^n : 0 \leq t_1 \leq \ldots \leq t_n \leq 1 \text{ and } t_j - t_i = \lambda \}.
	\]
	This set of inequalities is equivalently decribed by the inequalities
	\begin{align*}
		0 & \leq t_{i+1} - t_i \leq \ldots \leq t_{j-1} - t_i \leq \lambda \\
		0 & \leq t_1 \leq \ldots \leq t_{i-1} \leq t_i \leq t_{j+1} - \lambda \leq \ldots \leq t_n - \lambda \leq 1 - \lambda.
	\end{align*}
	Thus, the volume of $\Delta_n (\lambda)$ ends up being the product of two simplices. Specifically, we have the equation 
	\[
		\Vol_{n-1}(\Delta_\sigma(\lambda)) = \frac{\lambda^{j-i-1}}{(j-i-1)!} \cdot \frac{(1-\lambda)^{n-(j-i)}}{(n-(j-i))!}. 
	\]
	We can finish the computation with 
	\begin{align*}
		\Vol_{n-1}(K_\lambda) & = \sum_{k = 1}^{n-1} \sum_{\substack{\sigma \in e(P) \\ \sigma(y) - \sigma(x) = k}} \frac{\lambda^{k-1}}{(k-1)!} \cdot \frac{(1-\lambda)^{n-k}}{(n-k)!} \\
		& = \sum_{k = 1}^{n-1} \binom{n-1}{k-1} \frac{N_k}{(n-1)!} \cdot \lambda^{k-1}(1-\lambda)^{n-k}.
	\end{align*}
	Thus, we have that 
	\[
		\frac{N_k}{(n-1)!} = \mathsf{V}_{n-1}(K_0[n-k], K_1[k-1]) \implies N_k = (n-1)! \mathsf{V}_{n-1}(K_0[n-k], K_1[k-1]).
	\]
	This suffices for the proof of the Lemma. 
\end{proof}

\begin{lem} \label{affine-hulls}
    The affine hulls of $K_0$, $K_1$, and $K_0 + K_1$ are given by 
    \begin{align*}
        \aff (K_0) & = \RR \left [ \sum_{\omega \in P_{x \leq \cdot \leq y}} e_\omega \right ] \oplus \bigoplus_{\omega \in P \backslash P_{x \leq \cdot \leq y}} \RR[e_\omega] \\
        \aff (K_1) & = \sum_{\omega \in P_{\geq y}} e_\omega + \bigoplus_{\omega \in P \backslash {P_{\leq x} \cup P_{\geq y}}} \RR[e_\omega] \\
        \aff (K_0 + K_1) & = \sum_{\omega \in P_{\geq y}} e_\omega + \RR \left [ \sum_{\omega \in P_{x \leq \cdot \leq y}} e_\omega \right ] \oplus \bigoplus_{\omega \in P \backslash \{x, y\}} \RR[e_\omega]
    \end{align*}
    where $\RR[v]$ denotes the linear span of the vector $v$. As an immediate corollary, we have
    \begin{align*}
        \dim K_0 & = n - |P_{x < \cdot < y}| - 1 \\
        \dim K_1 & = n - |P_{< x}| - |P_{> y}| - 2 \\
        \dim (K_0 + K_1) & = n-1.
    \end{align*}
\end{lem}

\begin{proof}
    From Proposition~\ref{extension-narrow-exists}, there is a linear extension $f : P \to [n]$ such that $f(y) - f(x) = |P_{x < \cdot < y}| + 1$. Then, there are elements $\alpha, \beta_j, \gamma_k$ for $1 \leq i \leq a$, $1 \leq j \leq b$, and $1 \leq k \leq c$ such that 
    \[
        f(\alpha_1) < \ldots < f(\alpha_a) < f(x) < f(\beta_1) < \ldots < f(\beta_b) < f(y) < f(\gamma_1) < \ldots < f(\gamma_c).
     \] 
    This allows us to compute 
    \begin{align*}
        K_0 \cap \Delta_f = \{t \in [0, 1]^n : t_{\alpha_1} \leq \ldots \leq t_{\alpha_a} \leq t_x = \ldots = t_y \leq t_{\gamma_1} \leq \ldots \leq t_{\gamma_c} \}.
    \end{align*}
    In particular, by taking affine spans, we have 
    \[
        \aff K_0 \supseteq \aff (K_0 \cap \Delta_f) = \RR \left [\sum_{\omega \in P_{x \leq \cdot \leq y}} e_\omega \right ] \oplus \bigoplus_{\omega \in P \backslash P_{x \leq \cdot \leq y}} \RR[e_\omega].
    \]
    To prove the other inclusion, let $v \in K_0$ be an arbitrary vector. Since we are working in a subset of the order polytope, it must be the case that $v_x \leq v_\omega \leq v_y$ for all $\omega \in P_{x \leq \cdot \leq y}$. But since $v_x = v_y$, this means that $v_{\omega} = v_x = v_y$ are all $\omega \in P_{x \leq \cdot \leq y}$. Thus
    \[
        v \in \RR \left [ \sum_{\omega \in P_{x \leq \cdot \leq y}} e_\omega \right ] \oplus \bigoplus_{\omega \in P \backslash P_{x \leq \cdot \leq y}} \RR[e_\omega].
    \]
    Since $v$ is arbitrary, we have proved the given formula for $\aff K_0$. \\

    To prove the formula for $K_1$, we first use Proposition~\ref{extension-wide-exists} to construct a linear extension $g : P \to [n]$ satisfying $g(x) = 1 + |P_{< x}$ and $g(y) = n - |P_{> y}|$. Note that 
    \[
        K_1 = \{t \in \mathcal{O}_P : t_y - t_x = 1 \} = \{t \in \mathcal{O}_P : t_y = 1, t_x = 0\}.
    \]
    In particular, for any $v \in K_1$, for all $\omega \leq x$ and $\eta \geq y$ we must have $v_\omega = 0$ and $v_\eta = 1$. Thus, 
    \[
        K_1 \subseteq \sum_{\omega \in P_{ \geq y}} e_\omega + \bigoplus_{\omega \in P \backslash (P_{\leq x} \cup P_{\geq y})} \RR[e_\omega].
    \]
    On the other hand, if we have $P_{\leq x} = \{\alpha_1, \ldots, \alpha_a, x\}$ ordered by according to $g$ and $P_{\geq y} = \{y, \beta_1, \ldots, \beta_b\}$ ordered according to $g$, then
    \[
        K_1 \cap \Delta_g = \{t \in [0, 1]^n : 0 = t_{\alpha_1} \leq \ldots \leq t_{\alpha_a} \leq t_x \leq \ldots \leq t_y = t_{\beta_1} = \ldots = t_{\beta_b} = 1 \}.
    \]
    Taking the affine span, we have 
    \[
        \aff K_1 \supseteq \aff (K_1 \cap \Delta_g) = \sum_{\omega \in P_{\geq y}} e_{\omega} + \bigoplus_{\omega \in P \backslash (P_{\leq x} \cup P_{\geq y}} \RR[e_\omega]. 
    \]
    This completes the proof for the given formula for $\aff K_1$. The third formula follows from the fact that $\aff (K+L) = \aff K + \aff L$ for any non-empty sets $K$ and $L$. 
\end{proof}

\subsection{Modifications and linear extensions of posets}

In this section, we will define certain linear extensions and linear extension modifications to help with the analysis of the polytopes associated to the Kahn-Saks sequence. In particular, it will also be useful to have examples of extremal linear extensions which place $x$ and $y$ close to each or very far from each other. It will also be useful to modify an existing linear extension to make a covering relation adjacent in a linear extension with changing too many terms in the original extension. 

\begin{prop} \label{extension-exists}
    The poset $P$ as at least one linear extension.
\end{prop}

\begin{proof}
    We induct on the size of $P$. If $|P| = 1$, then the partial order is already a total order. Now suppose the claim is true for posets of size $n-1$ and let $P$ be a poset of size $n$. Let $m \in P$ be a maximal element. Let $P'$ be the poset which you get by deleting $m$. From the inductive hypothesis, there is a linear extension $g : P' \to [n]$. Then, the map $f : P \to [n]$ defined by 
    \[
        f(\omega) = 
        \begin{cases}
            g(\omega) & \text{ if $\omega \neq m$}, \\
            n & \text{ if $\omega = m$}
        \end{cases}
    \]
    is a linear extension of $P$.
\end{proof}

\begin{prop} \label{extension-narrow-exists}
    There exists a linear extension $f : P \to [n]$ satisfying $f(y) - f(x) = |P_{x < \cdot < y}| + 1$. 
\end{prop}

\begin{proof}
    Let $\mathscr{L}$ denote the set of linear extensions of $P$. From Proposition~\ref{extension-exists}, there exists a linear extension $f_{\mathsf{min}} \in \mathscr{L}$ which minimizes $f (y) - f(x)$ out of all $f \in \mathscr{L}$. Since $x \leq y$, we know 
    \[
        f_{\mathsf{min}}(y) - f_{\mathsf{min}}(x) \geq 1 + |P_{x < \cdot < y}|.
    \]
    If $f_{\mathsf{min}}(y) - f_{\mathsf{min}}(x) = 1 + |P_{x < \cdot < y}|$, then we proposition is proved. Otherwise, we can assume $f_{\mathsf{min}}(y) - f_{\mathsf{min}}(x) \geq 2 + |P_{x < \cdot < y}|$. Define the set
    \begin{align*}
        \mathsf{M}_{f_{\mathsf{min}}} & := \{z \in P : f_{\mathsf{min}} (x) < f_{\mathsf{min}}(z) < f_{\mathsf{min}}(y) \} \\
        & = f_{\mathsf{min}}^{-1} \left \{ ( f_{\mathsf{min}} (x) , f_{\mathsf{min}} (y) ) \right \}
    \end{align*}
    which consist of the elements of the poset which appear between $x$ and $y$ in the linear extension $f_{\mathsf{min}}$. From the assumption $f_{\mathsf{min}}(y) - f_{\mathsf{min}}(x) \geq 2 + |P_{x < \cdot < y}|$, we know there is at least one element in $\mathsf{M}_{f_{\mathsf{min}}}$ which is not comparable to both $x$ and $y$. This is because any element that is comparable to both $x$ and $y$ must be in the set $P_{x < \cdot < y}$ which doesn't have enough elements to satisfy the inequality. Without loss of generality, suppose that there exists an element which is not comparable to $x$. Let $z \in \mathsf{M}_{f_{\mathsf{min}}}$ be such an element which minimizes $f_{\mathsf{min}}(z)$. In other words, in the linear extension, it is the leftmost element of this type. All elements between $x$ and $z$ in the linear extension must be greater than $x$ from our choice of $z$. In particular, those elements cannot be less than $z$ since that would imply the relation $x < z$. This means that we can move $z$ to the immediate left of $x$ and still have a working linear extension. But the resulting linear extension will have a smaller value of $f(y) - f(x)$. This contradicts the minimality of $f_{\mathsf{min}}$ and completes the proof of the proposition.
\end{proof}

\begin{prop} \label{extension-wide-exists}
    There exists a linear extension $f : P \to [n]$ satisfying $f(x) = |P_{< x}| + 1$ and $f(y) = n - |P_{> y}|$. 
\end{prop}

\begin{proof}
    Consider any linear extensions of the three subposets: $P_{\leq x}$, $P \backslash (P_{\leq x} \cup P_{\geq y})$, and $P_{\geq y}$. By appending the extensions in this order, we get an extension of $P$ of the desired form. 
\end{proof}

\begin{prop} \label{covering-modification}
    Let $f : P \to [n]$ be a linear extension and let $a, b \in P$ be elements such that $a \lessdot b$. Then there is a linear extension $\tilde{f} : P \to [n]$ satisfying $\tilde{f}(b) - \tilde{f}(a) = 1$ and $\tilde{f}(z) = f(z)$ whenever $f(z) < f(a)$ or $f(z) > f(b)$. 
\end{prop}

\begin{proof}
    Consider the sub-poset consisting of $a$, $b$, and the elements in between $a$ and $b$ in the linear extension $f$. By Proposition~\ref{extension-narrow-exists}, there is a linear extension of this sub-poset so that $a$ lies to the immediate left of $b$. By replacing the portion of the extension $f$ containing the elements of this sub-poset by this linear extension, we get the desired linear extension. 
\end{proof}

\begin{lem} \label{add-new-relation}
    Let $(P, \leq)$ be a poset and let $x, y \in P$ be two elements such that $x \not \geq y$. Suppose that $x$ and $y$ are incomparable with respect to the order $\leq$. Let $\leq_*$ be a binary relation defined by $z_1 \leq_* z_2$ if and only if $z_1 \leq z_2$ or $z_1 \leq x$ and $z_2 \geq y$. Then $\leq_*$ is a partial order on $P$. 
\end{lem}

\begin{proof}
    In the following, we separate the proofs of reflexivity, anti-symmetry, and transitivity for the binary relation $\leq_*$. Let $a, b, c \in P$ be arbitrary points in the poset. 
    \begin{enumerate}[label = (\roman*)]
        \item Since $a \leq a$, we must have $a \leq_* a$. This proves reflexivity of $\leq_*$.

        \item If $a \leq_* b$ and $b \leq_* a$, then there are four possibilities. The first possibility is $a \leq b$ and $b \leq a$. In this case, we get $a = b$. The second possibility is $a \leq x, y \leq b$ and $b \leq x, y \leq a$. In this case, $y \leq a \leq x$ so $y \leq x$. But this contradicts our assumption that $x \not \geq y$. The third possibility is $a \leq b$ and $b \leq x$, $y \leq a$. In this case, we have $y \leq a \leq b \leq x$ which cannot happen for the same reason the second possibility cannot happen. Similarly, the fourth possibility cannot occur. Thus, in any case, $\leq_*$ satisfies the anti-symmetry property.

        \item For the final property, suppose that $a \leq_* b$ and $b \leq_* c$. Again, there are four possibilities. The first possibility is $a \leq b$ and $b \leq c$. Then $a \leq c$ which implies $a \leq_* c$. The second possibility is $a \leq b$, $b \leq x$, and $y \leq c$. In this case, $a \leq x$ and $y \leq c$. Thus $a \leq_* c$ holds. The third possibility is $a \leq x$, $y \leq b$, and $b \leq c$. In this case $a \leq x$ and $y \leq c$. Thus $a \leq_* c$ holds. The fourth possibility is $a \leq x, y \leq b$ and $b \leq x, y \leq c$. But this implies that $y \leq x$ which cannot happen. In all cases, $\leq_*$ satisfies the transitivity property.  
    \end{enumerate} 
\end{proof}

\begin{lem} \label{doesn't-change}
    Let $(P, \leq)$ be a poset and $x, y \in P$ be elements such that $x \not \geq y$. Let $\leq_*$ be the partial order as defined in Lemma~\ref{add-new-relation}. Let $\{N_k\}$ be the Kahn-Saks sequence with respect to $\leq$ and let $\{\widetilde{N}_k\}$ be the Kahn-Saks sequence with respect to $\leq_*$. Then $N_k = \widetilde{N}_k$ for all $k$. 
\end{lem}

\begin{proof}
    We prove the stronger statement that a bijective map $f : P \to [n]$ is a linear extension of $\leq$ satisfying $f(y) - f(x) = k$ if and only if it is a linear extension of $\leq_*$ satisfying $f(y) - f(x) = k$. Clearly any linear extension of $\leq_*$ is a linear extension of $\leq$. Now suppose that $f$ is a linear extension of $\leq$ satisfying $f(y) - f(x) = k$. Suppose for the sake of contradiction that it is not a linear extension of $\leq_*$. This means that there are two elements $a, b \in P$ such that $a <_* b$ and $f(b) < f(a)$. Since $a <_* b$, we either have $a < b$ or $a \leq x$ and $y \leq b$. In the first case, we would have $f(a) < f(b)$ since $f$ is a linear extension of $\leq$. This would be give a contradiction. For the second case, we would have $f(a) \leq f(x) < f(y)\leq f(b)$, which is also a contradiction. This suffices for the proof. 
\end{proof}

Lemma~\ref{linear-algebraic-lemma} is a linear algebraic result which allows us skimp on calculating the exact affine span of our faces. 

\begin{lem} \label{linear-algebraic-lemma}
    Let $W_1$ and $W_2$ be finite dimensional affine spaces. Let $V_1 \subseteq W_1$ and $V_2 \subseteq W_2$ be affine subspaces such that $\dim V_1 = \dim W_1 - a$ and $\dim V_2 = \dim W_2 - b$ for some $a, b \geq 0$. Then $\dim (V_1 + V_2) \geq \dim (W_1 + W_2) - a - b$. 
\end{lem}

\begin{proof}
    After suitable translations, it suffices to prove the result when all our spaces are vector spaces. Then, we have 
    \begin{align*}
        \dim (V_1 + V_2) & = \dim V_1 + \dim V_2 - \dim (V_1 \cap V_2) \\
        & \geq \dim W_1 + \dim W_2 - \dim (W_1 \cap W_2) - a - b \\
        & = \dim (W_1 + W_2) - a - b. 
    \end{align*}
    This completes the proof of the lemma. 
\end{proof}

Our main application of Lemma~\ref{linear-algebraic-lemma} is when $W_1$ and $W_2$ are the affine spans of $K_0$ and $K_1$ while $V_1$ and $V_2$ are the affine spans of faces of $K_0$ and $K_1$. 

\section{Stanley Basis Inequality for Graphic Matroids}

\begin{lem} \label{lem:base-case}
	Theorem~\ref{stanley-matroid-thm-for-graphic-matroids-equality} is true when $G = (V, E)$ is a connected graph on $|V| = 3$ vertices. 
\end{lem}

\begin{proof}
	The only non-trivial case to check is for $n = 3$. We can define $r_1 = e_R(v_2, v_3)$, $r_2 = e_R(v_1, v_3)$, and $r_3 = e_R(v_1, v_2)$. We can then define $q_1, q_2$, and $q_3$ similarly. From the same reasoning as in Theorem~\ref{stanley-matroid-thm-for-graphic-matroids-equality}, we get that
	\begin{align*}
		r_1 & = \lambda_1 q_1,  r_2 = \lambda_2 q_2, r_3 = \lambda_3 q_3 
	\end{align*}
	and 
	\begin{align*}
		r_1 + r_2 & = \lambda_1 (q_1 + q_2) = \lambda_2 (q_1 + q_2) \\
		r_2 + r_3 & = \lambda_2 (q_2 + q_3) = \lambda_3 (q_2 + q_3) \\
		r_3 + r_1 & = \lambda_3 (q_3 + q_1) = \lambda_1 (q_3 + q_1)
	\end{align*}
	for some constants $\lambda_1, \lambda_2, \lambda_3 \geq 0$. All of these equations imply that $\lambda_1 = \lambda_2 = \lambda_3$. This suffices for the proof. 
\end{proof}
\bibliographystyle{plain}
\bibliography{ref}
\end{document}